\date{} 
\title{A contour line of the \\ continuum 
Gaussian free field}
\author{Oded Schramm \and Scott Sheffield}
\newif\ifdraft
\long\def\note#1/{\ifdraft{\marginpar{{$\Longleftarrow$}} \bf [#1] }\fi}
\long\def\comment#1{} \long\def\old#1{}
\numberwithin{equation}{section}
\numberwithin{figure}{section}
\newtheorem{theorem}{Theorem}
\numberwithin{theorem}{section}
\newtheorem{corollary}[theorem]{Corollary}
\newtheorem{lemma}[theorem]{Lemma}
\newtheorem{proposition}[theorem]{Proposition}
\theoremstyle{remark}
\theoremstyle{remark}\newtheorem{remark}[theorem]{Remark}
\def\eref#1{(\ref{#1})}
\let\qqed=\qed
\def\QED{\qqed\medskip}
\let\qed=\QED
\newcommand{\R}{\mathbb{R}}
\newcommand{\C}{\mathbb{C}}
\def\H{\mathbb{H}}
\def\D{\mathbb{D}}
\def\U{\mathbb{U}}
\def\diam{\mathop{\mathrm{diam}}}
\def\dist{\mathop{\mathrm{dist}}}
\def\Im{{\rm Im}\,}
\def\Re{{\rm Re}\,}
\def\ccup{\check{\cup}}
\def\Harm{{\mathrm{Harm}}}
\def\Supp{{\mathrm{Supp}}}
\def\SLEkk#1/{$\mathrm{SLE}(#1)$}
\def\SLEr#1/{$\mathrm{SLE(\kappa;#1)}$}
\def\SLEkr#1;#2/{$\mathrm{SLE(#1;#2)}$}
\def\SLEk/{\SLEkk{\kappa}/}
\def\SLEtwo/{\SLEkk2/}
\def\SLE/{$\mathrm{SLE}$}
\def\standardg/{standard Gaussian}
\def\Ito/{It\^o}
\def \eps {\epsilon}
\def \P {{\bf P}}
\def\md{\mid}
\def\Bb#1#2{{\def\md{\bigm| }#1\bigl[#2\bigr]}}
\def\BB#1#2{{\def\md{\Bigm| }#1\Bigl[#2\Bigr]}}
\def\Bs#1#2{{\def\md{\mid}#1[#2]}}
\def\Pb{\Bb\P}
\def\Eb{\Bb\E}
\def\PB{\BB\P}
\def\Ps{\Bs\P}
\def \p {{\partial}}
\def \E {{\bf E}}
\def\closure{\overline}
\def \proof {{ \medbreak \noindent {\bf Proof.} }}
\def\proofof#1{{ \medbreak \noindent {\bf Proof of #1.} }}
\def\setminus{\smallsetminus}
\def\capacity{\mathrm{cap}_\infty}
\def\inr#1{\mathrm{rad}_{#1}}
\def\WW{W^*}
\def\WW_#1{{\tilde W}_{s_#1}}
\def\pphi{\psi}
\def\TG{TG}
\def\dhaus{{d}_{\text{HAUS}}}
\def\dstrong{{d}_{\text{STRONG}}}
\def\rr{{r_D}}
\def\rrr#1{{r_{#1}}}
\def\hg{{\widehat \gamma}}
\def\vv{v_0}
\def\density{{\mathfrak p}}
\def\condexp{\mathbf C}
\def\LSWlesl{math.PR/0112234}
\def\LSWlesl{MR2044671}
\def\Dudley{MR91g:60001}
\def\Dudley{MR1932358}
\def\RSsle{MR2153402}
\def\PommeBDRY{MR95b:30008}
\def\ProtterBook{MR1037262}
\def\GFFSurvey{math.PR/0312099}
\def\Janson{MR1474726}
\def\DGFFpaper{MR2486487}
\def\Gross{MR0212152}
\def\noopsort#1{}
\begin{document}
\maketitle

\begin{abstract}
Consider an instance $h$ of the Gaussian free field on a simply connected planar
domain $D$ with boundary conditions $-\lambda$ on one boundary arc
and $\lambda$ on the complementary arc, where $\lambda$ is the
special constant $\sqrt{\pi/8}$. We argue that even though $h$ is
defined only as a random distribution, and not as a function, it has
a well-defined zero level line $\gamma$ connecting the endpoints of
these arcs, and the law of $\gamma$ is \SLEkk4/. We construct
$\gamma$ in two ways: as the limit of the chordal zero contour lines
of the projections of $h$ onto certain spaces of piecewise linear
functions, and as the only path-valued function on the space of
distributions with a natural Markov property.

We also show that, as a function of $h$, $\gamma$ is ``local'' (it does not change when $h$ is modified away from $\gamma$)
and derive some general properties of local sets.

\end{abstract}

\newpage
\tableofcontents
\newpage
\section{Introduction} \label{introsection}
\subsection{Overview}
The two dimensional Gaussian free field (GFF) is an object of central importance in mathematics and physics.
It appears frequently as a model for random surfaces and height interfaces and as a tool
for studying two-dimensional statistical physics models that are not obviously random surfaces (e.g., Ising and
Potts models, $O(n)$ loop models).  It appears in random matrix theory (e.g.\ \cite{MR2361453}), as a random electrostatic potential in Coulomb gas theory, and as the starting point for many constructions in conformal field theory and string theory.  It is related to the Schramm-Loewner evolution (SLE) in a range of ways \cite{S05, MR2525778, 2010arXiv1006.1853I, 2010JSP...140....1H} (see Section \ref{ss.mainresults}) and it represents the logarithmic conformal metric distortion in critical Liouville quantum gravity (see details and references in
\cite{2008arXiv0808.1560D}).

This paper is a sequel to another paper by the current authors
\cite{\DGFFpaper}, which contains a much more detailed overview of the topics mentioned above, and many more references.
That paper studied the discrete Gaussian free field (DGFF), which is a random function on a graph that (when defined on increasingly fine lattices) has the GFF as a scaling limit.  The authors showed in \cite{\DGFFpaper} that a certain level line of the DGFF has \SLEkk 4/ as a scaling limit.

More precisely, when
one defines the DGFF on a planar lattice graph with triangular faces (interpolating the DGFF linearly on each triangle to produce
a continuous function on a domain in $\R^2$), with positive boundary conditions on one boundary arc and negative boundary
conditions on the complementary arc, the zero chordal contour line connecting the two endpoints of these arcs converges in law
(as the lattice size gets finer) to a variant of \SLEkk 4/.   In particular, there is a special constant $\lambda>0$ such that if
boundary conditions are set to $\pm \lambda$ on the two boundary arcs, then the zero chordal contour
line converges to \SLEkk 4/ itself as the lattice size tends to zero.
The exact value of $\lambda$ was not
determined in \cite{\DGFFpaper}, but we will determine it here.
Specifically, we will show that if the DGFF is scaled in such a way
that its fine-mesh limit is the continuum Gaussian free field (GFF), defined by the Dirichlet norm $\int |\nabla \phi|^2$,
then $\lambda = \sqrt{\pi/8}$.  (Another common convention is to take $(2\pi)^{-1}\int |\nabla \phi|^2$ for the Dirichlet norm.  Including the factor $(2\pi)^{-1}$ is equivalent to multiplying the GFF by $\sqrt{2\pi}$, which would make $\lambda = \pi/2$.)

It was observed in \cite{\DGFFpaper} that one can project an
instance $h$ of the GFF on a simply connected planar domain onto a sequence of
subspaces to obtain a sequence of successively closer continuous and
piecewise linear
approximations to $h$, each of
which has the law of a discrete Gaussian free field. Although $h$ is
defined only as a distribution, and not as a continuous function,
one might hope to define the ``contour lines'' of $h$ as the limits
of the contour lines of these approximations. The work in
\cite{\DGFFpaper} implies that (when the boundary conditions are
$\pm \lambda$ on complementary arcs) the zero chordal contour lines
of the approximations converge {\em in law} to \SLEkk4/. Our goal is
to strengthen these results and show that these contour lines converge {\em in
probability} to a path-valued function $\gamma$ of $h$ whose law is
\SLEkk 4/.  We will also characterize $\gamma$ directly by showing
that it is the unique path-valued function of $h$ with a certain
Markov property.   In Section \ref{s.local}, we also show that, as a function of
$h$, $\gamma$ is ``local.''  Very roughly speaking, this means that $\gamma$ can be determined
without observing the values of $h$ at points away from $\gamma$ --- and thus, $\gamma$ is not
changed if $h$ is modified away from $\gamma$.  We will also give a discrete definition of local
and show that both discrete and continuum local sets have
nice properties.

The reader may consult \cite{\DGFFpaper} for background and a historical
introduction to the contour line problem for discrete and
continuum Gaussian free fields.  We will forego such an introduction here and proceed
directly to the notation and the main results.

\medbreak {\noindent\bf Acknowledgments.} We wish to thank Vincent
Beffara, Richard Kenyon, Jan\'{e} Kondev, Julien Dub\'edat, and
David Wilson for inspiring and useful conversations.  In particular, Kenyon
and the first author worked out a coupling of the GFF and \SLEkk8/ early on, which is quite similar
to the coupling between the GFF and \SLEkk4/ presented here.  We also thank Jason Miller for helpful
comments on a draft of this paper.

This paper was mostly finished well before the tragic 2008 death of the first author, Oded Schramm.  The results were originally part of our first draft of \cite{\DGFFpaper} (begun in 2003) and were separated once it became apparent that \cite{\DGFFpaper} was becoming quite long for a journal paper.  We presented them informally in slides and online talks (e.g.\ \cite{S05}) and mostly finished the writing together.  The completion of this project is a somewhat melancholy occasion for the second author, as it concludes a long and enjoyable collaboration with an inspiring mathematician and a wonderful friend.  It is also another occasion to celebrate the memory of Oded Schramm.

\subsection{Notation and definitions} \label{ss.notation}

Let $D$ be a planar domain (i.e.,
a connected open subset of $\R^2$, which we sometimes identify with the complex
plane $\C$).  Assume further that $D$ is a subset of a simply connected domain that
is not all of $\C$.  (In particular, this implies that
$D$ can be mapped conformally to a subset of the unit disc.)

For each $x \in D$, the point in $\partial D$ at which a Brownian motion started at $x$
first exits $D$ is a random variable whose law
we denote by $\nu_x$ and call the {\bf harmonic measure of $\partial D$ viewed from
$x$}.  It is not hard to show that for
any $y \in D$ the measures $\nu_x$ and $\nu_y$ are absolutely continuous
with respect to one another, with a Radon Nikodym derivative bounded between two positive constants.
In particular, this implies that if a function $f_\partial : \partial D \to \R$ lies in $L^1(\nu_x)$
then it lies in $L^1(\nu_y)$ for any $y \in D$.
We refer to a function with this property as a {\bf harmonically integrable} function of $\partial D$.
If $f_\partial:\partial D \to \R$ is harmonically integrable, then the function $$f(x) = \int f_\partial(y)
d\nu_x(y),$$ defined on $D$, is called the {\bf harmonic extension} of $f_\partial$ to $D$.

Given a planar domain $D$ and $f,g \in L^2(D)$, we denote by $(f,g)$ the
inner product $\int f(x)g(x)  dx$
(where $dx$ is Lebesgue measure on $D$).  Let $H_s(D)$ denote the space of real-valued $C^\infty$,
compactly supported functions on $D$.
We also write $H(D)$ (a.k.a., $H^1_0(D)$ --- a Sobolev space) for
the Hilbert space completion of $H_s(D)$ under the {\bf Dirichlet
inner product} $$(f, g)_{\nabla} := \int_{D}\nabla f \cdot \nabla
g\;dx.$$ We write $\|f\| := (f,f)^{1/2}$ and $\|f\|_\nabla := (f,f)_\nabla^{1/2}$.
If $f,g \in H_s(D)$, then integration by parts gives $(f,g)_\nabla = (f, -\Delta g)$.

Let $\phi$ be a conformal map from $D$ to another domain $D'$.  Then an elementary change of
variables calculation shows that $$\int_{D'} \nabla (f_1 \circ \phi^{-1} ) \cdot
\nabla (f_2 \circ \phi^{-1} )\,dx = \int_{D} (\nabla f_1 \cdot \nabla f_2)\,dx.$$
In other words, the Dirichlet inner product is invariant
under conformal transformations.

It is conventional to use $H_s(D)$ as a space of test functions.  This space is a topological vector space in which the topology is
defined so that $\phi_k \to 0$ in $H_s(D)$ if and only if
there is a compact
set on which all of the $\phi_k$ are supported and
the $m$th derivative of $\phi_k$ converges uniformly to zero for each integer $m \geq 1$.

A {\bf distribution} on $D$ is a continuous linear
functional on $H_s(D)$.
Since $H_s(D) \subset L^2(D)$, we
may view every $h \in L^2(D)$ as a distribution
$\density\mapsto (h,\density)$.
We will frequently abuse notation and use $h$ --- or more precisely the map denoted by $\density \to (h,\density)$ --- to represent a general distribution (which is a functional of $\density$), even though $h$ may not correspond to an element in $L^2(D)$.
We define partial derivatives and integrals of distributions in the
usual way (via integration by parts), i.e.,
$(\frac{\partial}{\partial x} h, \density):= -(h, \frac{\partial}{\partial x}
\density)$; in particular, if $h$ is a distribution then
$\Delta h$ is a distribution defined by $(\Delta h, \density):= (h, \Delta \density)$.
When $h$ is a distribution and $g \in H_s(D)$,
we also write $$(h, g)_\nabla := (-\Delta h, g) = (h, -\Delta g).$$

When $x \in D$ is fixed, we let $\tilde G_x(y)$ be the
harmonic extension to $D$ of the function on $\partial D$
given by $(2\,\pi)^{-1}\log|y-x|$.  (It is not hard to see that this
function is harmonically integrable.)
Then {\bf Green's function in the domain $D$} is defined by
$$G(x,y) = (2\,\pi)^{-1} \log|y-x| - \tilde G_x(y).$$
When $x \in D$ is fixed, Green's function may be viewed as a distributional
solution of $\Delta G(x,\cdot)= -\delta_x(\cdot)$ with zero boundary
conditions.  Note, however, that under the above definition
it is not generally true that $G(x,\cdot)$ extends
continuously to a function on $\overline D$ that vanishes on $\partial D$ (e.g.,
if $\partial D$ contains isolated points,
then $G(x,\cdot)$ need not tend to zero at those points) although
this will be the case if $D$ is simply connected.
For any $\density \in H_s(D)$, we write $-\Delta^{-1} \density$
for the function $\int_D G(\cdot ,y)\,\density (y)\,dy$.  This is a $C^\infty$ (though not necessarily compactly supported)
function in $D$ whose Laplacian is $-\density$.



If $f_1 = - \Delta^{-1} \density_1$ and $f_2 = -\Delta^{-1} \density_2$, then integration by parts gives
$(f_1, f_2)_\nabla = (\density_1,-\Delta^{-1} \density_2).$  By the definition of $-\Delta^{-1} \density_2$ above, the
latter expression may be rewritten as
\begin{equation}\label{e.greencovariance} \int_{D \times D} \density_1(x)\,
\density_2(y)\, G(x,y) \, dx \, dy\,,\end{equation} where $G(x,y)$
is Green's function in $D$.  Another way to say this is that, since $\Delta G(x,\cdot)=-\delta_x(\cdot)$, integration by parts
gives $\int_D G(x,y)\,\density_2 (y)\,dy =
-\Delta^{-1}\density_2(x)$, and we obtain
\eqref{e.greencovariance} by multiplying each side by
$\density_1(x)$ and integrating with respect to $x$.

We next observe that every $h \in H(D)$
is naturally a distribution, since we may define the map $(h, \cdot)$ by $(h, \density) :=
(h, -\Delta^{-1} \density)_\nabla$.  (It is not hard
to see that $-\Delta^{-1} \density \in H(D)$, since its Dirichlet
energy is given explicitly by \eqref{e.greencovariance}; see \cite{\GFFSurvey} for more details.)

An instance of the GFF with zero boundary conditions on $D$ is a
random sum of the form $h = \sum_{j = 1}^{\infty} \alpha_j f_j$
where the $\alpha_j$ are i.i.d.\ one-dimensional standard (unit
variance, zero mean) real Gaussians and the $f_j$ are an orthonormal
basis for $H(D)$.  This sum almost surely does not converge
within $H(D)$ (since $\sum |\alpha_j|^2$ is a.s.\ infinite).  However, it does converge almost surely within the space
of distributions --- that is, the limit $(\sum_{i=1}^{\infty} \alpha_j f_j, \density)$
almost surely exists for all $\density \in H_s(D)$, and the limiting value as a function
of $\density$ is almost surely a continuous functional on $H_s(D)$ \cite{\GFFSurvey}.
We view $h$ as a sample from the measure space
$(\Omega, \mathcal F)$ where $\Omega = \Omega_D$ is the set of distributions
on $D$ and $\mathcal F$ is the smallest $\sigma$-algebra that makes
$(h, \density)$ measurable for each $\density \in H_s(D)$, and we denote by $\mu = \mu_D$
the probability measure which is the law of $h$.  The following is a standard and straightforward
result about Gaussian processes \cite{\GFFSurvey}:

\begin{proposition} \label{p.GFFdef}
The GFF with zero boundary conditions on $D$ is the only random distribution
$h$ on $D$ with the property that for each
$\density \in H_s(D)$ the random variable $(h,\density) = (h, -\Delta^{-1} \density)_\nabla$
is a mean zero Gaussian with variance
$(-\Delta^{-1} \density, -\Delta^{-1} \density)_\nabla = (\density, -\Delta^{-1} \density)$.
In particular, the law of $h$ is independent of the choice of basis $\{f_j\}$ in
the definition above.
\end{proposition}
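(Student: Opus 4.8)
The plan is to treat the two assertions separately: that the random series defines a distribution whose marginals are the stated Gaussians, and that these marginals in turn pin down the law. First I would verify the variance computation for $h = \sum_j \alpha_j f_j$. For a fixed $\density \in H_s(D)$, write $(h, \density) = \sum_j \alpha_j (f_j, \density)$ and use the distributional pairing $(f_j, \density) = (f_j, -\Delta^{-1}\density)_\nabla$ valid for $f_j \in H(D)$. Setting $g := -\Delta^{-1}\density \in H(D)$, the numbers $(f_j, g)_\nabla$ are exactly the coefficients of $g$ in the orthonormal basis $\{f_j\}$, so Parseval gives $\sum_j (f_j, g)_\nabla^2 = \|g\|_\nabla^2$. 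Since the $\alpha_j$ are i.i.d.\ standard Gaussians and this sum of squared coefficients is finite, the series $\sum_j \alpha_j (f_j, g)_\nabla$ converges almost surely (and in $L^2$) to a mean-zero Gaussian of variance $\|g\|_\nabla^2 = (g,g)_\nabla = (\density, -\Delta^{-1}\density)$, the last equality being the identity \eqref{e.greencovariance} recorded above. That the partial sums converge to a genuine distribution, continuous on $H_s(D)$, is the cited convergence result \cite{\GFFSurvey}, which I would simply invoke.

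For the characterization, let $h$ be any random distribution whose marginals $(h, \density)$ are mean-zero Gaussian with the prescribed variances. The key point is that the entire Gaussian \emph{structure} is forced. For test functions $\density_1, \dots, \density_n$ and reals $c_1, \dots, c_n$, the combination $\sum_i c_i (h, \density_i)$ equals $(h, \sum_i c_i \density_i)$, and since $\sum_i c_i \density_i \in H_s(D)$ this single quantity is mean-zero Gaussian by hypothesis. Hence the vector $((h, \density_1), \dots, (h, \density_n))$ is jointly mean-zero Gaussian, so its law is determined by its covariance matrix. By polarization, $\E[(h,\density_i)(h,\density_j)]$ is read off from the prescribed variances of $(h,\density_i)$, $(h,\density_j)$, and $(h,\density_i+\density_j)$, each given by $(\,\cdot\,, -\Delta^{-1}\,\cdot\,)$ applied to the relevant test functions. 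Thus every finite-dimensional distribution of the field $\density \mapsto (h, \density)$ is determined.

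Finally I would upgrade this to equality of laws on $(\Omega, \mathcal F)$. Since $\mathcal F$ is by definition generated by the maps $h \mapsto (h, \density)$, the cylinder events depending on finitely many $(h, \density_i)$ form a $\pi$-system generating $\mathcal F$; by the $\pi$--$\lambda$ theorem the probability measure $\mu$ is uniquely determined by the finite-dimensional distributions found above. This yields uniqueness, and as an immediate corollary the independence of the construction from the chosen orthonormal basis $\{f_j\}$, since the variance formula $(\density, -\Delta^{-1}\density)$ makes no reference to the basis.

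The only genuinely delicate point is the passage from one-dimensional marginals to the full law of the distribution-valued object; everything else is the routine linear algebra and measure-theoretic bookkeeping described above, and the almost-sure convergence of the defining series is borrowed from \cite{\GFFSurvey}. I expect no real obstacle here — consistent with the paper's description of this as a standard fact — so my effort would go into stating the polarization and $\pi$--$\lambda$ steps cleanly rather than into any hard estimate.
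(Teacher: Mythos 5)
Your argument is correct: the paper offers no proof of this proposition, presenting it as a standard fact and citing the GFF survey, and your write-up is precisely the standard argument that citation points to --- Parseval's identity for the variance of the almost surely convergent series, linearity of the distributional pairing plus the ``all linear combinations are Gaussian'' criterion for joint Gaussianity, polarization for the covariances, and the $\pi$--$\lambda$ theorem to pass from finite-dimensional distributions to uniqueness of the law on $(\Omega, \mathcal F)$. The only input you defer, the almost sure convergence of $\sum_j \alpha_j f_j$ in the space of distributions, is deferred to the same reference by the paper itself in the paragraph preceding the proposition, so nothing is missing.
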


Given a harmonically integrable function
$h_\partial : \partial D \to \R$, the {\bf GFF
with boundary conditions $h_\partial$} is the random distribution whose law is
that of the GFF with zero boundary conditions plus the deterministic function
$\underline h_\partial$ which is the harmonic interpolation of $h_\partial$ to $D$
(viewing $\underline h_\partial$ as a distribution).

Suppose $g \in H_s(D)$ and write $\density :=
-\Delta g$.  If $h$ is a GFF with boundary conditions $h_\p$ (and
$\underline h_\p$ is the harmonic extension of $h_\p$ to $D$), then
we may integrate by parts and
write $(h,\density) = (\underline h_\p, \density) + (h, g)_{\nabla}$.  Note
that $(\underline h_\p,\density)$ is deterministic
while the latter term, $(h,g)_{\nabla}$ does not depend
on $h_\p$.
 Then the random variables $(h,\density)$, for
$\density \in \Delta H_s(D)$,
have means $(\underline h_\p, \density)$ and covariances given by
$$\text{Cov} \left((h,\density_1), (h,\density_2)\right) =
(\density_1,-\Delta^{-1} \density_2) = \int_{D \times D} \density_1(x)\,
\density_2(y)\, G(x,y) \, dx \, dy\,,$$ where $G(x,y)$
is Green's function in $D$.

Green's function in the upper half plane is
$G(x,y)=(2\,\pi)^{-1}\,\log \Bigl| \frac{\overline{x}-y}{x-{y}} \Bigr|$
(where $\overline{x}$ is the complex conjugate of $x$),
since the right hand side is zero when $y\in\R$
and is the sum of $-(2\,\pi)^{-1}\log|x-y|$ and a function harmonic in $y$.
(Note that $G(x,y)>0$ for all $x,y \in \H$.)  In physical
Coulomb gas models, when $\density$ represents the density
function of a (signed) electrostatic charge distribution, the
quantity $(\density, -\Delta^{-1}\density) = \int_{D \times D}
\density(x)\, \density(y)\, G(x,y)\, dx\,dy$ is sometimes called
the {\bf electrostatic potential energy} or {\bf energy of
assembly} of $\density$ (assuming the system is grounded at the
boundary of $D$).

If $\phi$ is a conformal map from $D$ to a domain $\tilde D$ and $h$ is a distribution
on $D$, then we define the pullback $h \circ \phi^{-1}$ of $h$ to be the distribution $\tilde h$
on $\tilde D$ for which
$(\tilde h, \tilde \density) = (h, \density)$ whenever $\density \in H_s(D)$ and
$\tilde \density = |\phi'|^{-2} \density \circ \phi^{-1}$,
where $\phi'$ is the complex derivative of $\phi$ (viewing the latter as an
analytic function on a subset of $\C$).

Note that if $\density$ is interpreted in the physical sense as an electrostatic charge density on $D$
and $\phi$ is a change-of-coordinates map, then $\tilde \density$ is the corresponding charge density
on $\tilde D$.
The reader may also observe that if $h$ is a function in $H(D)$, interpreted as a distribution as
discussed above,
then the function $h \circ \phi^{-1} \in H(\tilde D)$, interpreted as a distribution, agrees
with the definition given above.

Also, if $f \in H(D)$ and $\density = -\Delta f \in H_s(D)$, then we have the following:
$$(h, f)_\nabla = (h, \density) = (h \circ \phi^{-1}, |\phi'|^{-2} \density \circ \phi^{-1}) =
(h \circ \phi^{-1}, f \circ \phi^{-1})_\nabla,$$
where the last equality follows from the simple calculus fact that
$$-\Delta (f \circ \phi^{-1}) = |\phi'|^{-2} \density \circ \phi^{-1}.$$
This and Proposition \ref{p.GFFdef} imply the conformal invariance of the GFF: i.e.,
if $h$ has the law of a GFF on $D$, then $h \circ \phi^{-1}$
has the law of a GFF on $\tilde D$.

In addition to defining an instance $h$ of the GFF with zero boundary conditions
as a random distribution, it is also occasionally useful to
define the random variables $(h, g)_\nabla$ for all $g \in H(D)$ (and not merely $g \in H_s(D)$).
Suppose that an orthonormal basis $\{f_j\}$ for $H(D)$, made up
of elements in $H_s(D)$, is fixed and we write $\alpha_j = (h, f_j)_\nabla$.
Then the $\alpha_j$ are i.i.d.\ Gaussians and for each fixed $g\in H(D)$ the sum $$(h, g)_\nabla :=
\sum_{j=1}^\infty\alpha_j\,\bigl( f_j, g\bigr)_\nabla$$ converges
almost surely.  The collection of Gaussian random variables $(h,g)_\nabla$, with
$g \in H(D)$ (defined as limits of partial sums) is a Hilbert space under the covariance inner
product.  Replacing $\{ f_j\}$ with another orthonormal basis for $H(D)$ only changes
the random variable $(h, g)_\nabla$ on a set of measure zero \cite{\GFFSurvey}.
Thus, for each fixed $g \in H(D)$, it makes sense to think of the
random variable $(h,g)_\nabla$ as a real-valued function of $\Omega$
(which is canonically defined up to redefinition on a set of measure zero).
This Hilbert space of random variables is a
closed subspace of $L^2(\Omega, \mathcal F, \mu)$.  The covariance
of $(h,g_1)_\nabla$ and $(h,g_2)_\nabla$ is equal to
$(g_1,g_2)_\nabla$, so this Hilbert space is naturally
isomorphic to $H(D)$ \cite{\GFFSurvey,\Janson}.
(In general, a collection of centered Gaussian random variables
which is a Hilbert space under the covariance inner product is
called a {\bf Gaussian Hilbert space} \cite{\Janson}.)

\subsection{GFF approximations and discrete level lines} \label{ss.GFFapprox}

In order to construct contour lines of an instance $h$ of the Gaussian free field,
it will be useful to approximate $h$
by a continuous function which is linear on each triangle in a triangular
grid.  We now describe a special case of the approximation scheme
used in \cite{\DGFFpaper}.  (The results in \cite{\DGFFpaper} also apply to more
general periodic triangulations of $\R^2$.)  Let $\TG$ be the triangular grid in the plane $\mathbb C \cong
\mathbb R^2$, i.e., the graph whose vertex set is the integer span
of $1$ and $e^{\pi i/3}=(1+\sqrt{3}\,i)/2$, with straight edges
joining $v$ and $w$ whenever $|v-w|=1$.  A {\bf $\TG$-domain} $D
\subset \mathbb R^2$ is a domain whose boundary is a simple closed
curve comprised of edges and vertices in $\TG$.

Fix some $\lambda>0$.
Let $D$ be any $\TG$-domain, and let $\p_+\subset\p D$ be an arc
whose endpoints are distinct midpoints of edges of $\TG$.  Let $V$ denote
the vertices of $\TG$ in $\closure D$. Set $\p_-:=\p D\setminus
\p_+$. Let $h_{\partial}:V \cap \partial D \to \R$ take the value
$-\,\lambda$ on $\p_-\cap V$ and $\lambda$ on $\p_+\cap V$. Let
$\phi_D$ be any conformal map from $D$ to
the upper half-plane $\H$ that maps $\p_+$ onto
the positive real ray.

Let $h^0$ be an instance of the GFF on $\H$
with zero boundary conditions.
By conformal invariance of the
Dirichlet inner product, $h^0 \circ \phi_D$ has the law of a GFF
on $D$.
  Let $H_{\TG}(D)$ be the subspace of $H(D)$ comprised of
continuous functions that are affine on each $\TG$ triangle in
$D$, and let $h^0_D$ be the orthogonal projection of $h^0$
onto $H_{\TG}(D)$
with respect to the inner product $(\cdot,\cdot)_\nabla$.
That is, $h^0_D$ is the random element of $H_{\TG(D)}$ for
which $(h^0,\cdot)_\nabla$ and $(h^0_D,\cdot)_\nabla$ are equivalent
as linear functionals on $H_{\TG(D)}$.  (The former is defined
almost surely from the Gaussian Hilbert space perspective,
as discussed above; the given definition of this projection depends on
the choice of basis $\{f_j\}$, but changing the basis affects
the definition only on a set of measure zero.)

We may view $h^0_D$ as a function on the vertices of $\TG \cap
\overline D$, linearly interpolated on each triangle
\cite{\DGFFpaper}.  Up to multiplicative constant, the law of
$h^0_D$ is that of the discrete Gaussian free field on the graph
$\TG \cap \overline D$ with zero boundary conditions on the
vertices in $\partial D$ \cite{\DGFFpaper}.  Let $h_D$ be $h^0_D$
plus the (linearly interpolated) discrete harmonic (w.r.t\ the usual
discrete Laplacian) extension of
$h_{\partial}$ to $V\cap D$.
Thus, $h_D$ is precisely the discrete field for which~\cite{\DGFFpaper}
proves that the chordal interface
of $h_D\circ\phi_D^{-1}$ converges to \SLEkk4/ for an appropriate
choice of $\lambda$.

Now $h(z) = h^0(z) + \lambda\left( 1- 2\pi^{-1}\arg
(z)\right)$ is an instance of the GFF on $\H$ with boundary
conditions $-\lambda$ on the negative reals and $\lambda$ on the
positive reals, as $\lambda\left( 1- 2\pi^{-1}\arg
(z)\right)$ is the harmonic function with this boundary data.
The functions $h_D \circ \phi_D^{-1}$
may be viewed as approximations to $h$.

There is almost surely a zero contour line $\gamma_D$ (with
no fixed parametrization) of $h_D$ on $D$ that connects the
endpoints of $\p_-$ and $\p_+$, and $\hg_D:=\phi_D \circ \gamma_D$ is a
random path in $\H$ connecting $0$ to $\infty$.  We are interested
in the limit of the paths $\hg_D$ as $D$ gets
larger.  The correct sense of \lq\lq large\rq\rq\ is measured by
$$\rr := \inr{\phi_D^{-1}(i)}(D)\,,$$ where $\inr{x}(D)$
denotes the radius of $D$ viewed from $x$, i.e., $\inf_{y \not \in
D}|x-y|$. Of course, if $\phi_D^{-1}(i)$ is at bounded distance
from $\p D$, then the image of the triangular grid under $\phi_D$
is not fine near $i$, and there is no hope for approximating \SLEkk4/
by $\hg_D$.

We have chosen to use $\H$ as our canonical domain (mapping all
other paths into $\H$), because it is the most convenient domain
in which to define chordal \SLE/.  When describing distance
between paths, it is often more natural to use the disc or another
bounded domain.  To get the best of both worlds, we will endow
$\H$ with the metric it inherits from its conformal map onto the
disc $\U$. Namely, we let $d_*(\cdot,\cdot)$ be the metric on
$\closure\H\cup\{\infty\}$ given by $d_*(z,w)=|\Psi(z)-\Psi(w)|$,
where $\Psi(z):=(z-i)/(z+i)$ maps $\closure\H\cup\{\infty\}$ onto
$\closure\U$. (Here $\closure \H$ denotes the Euclidean closure of $\H$.)  If $z\in\closure\H$, then $d_*(z_n,z)\to 0$ is
equivalent to $|z_n-z|\to 0$, and $d_*(z_n,\infty)\to 0$ is
equivalent to $|z_n|\to\infty$.



Let $\Lambda$ be the set of continuous
functions $W:[0,\infty)\to\R$ which satisfy $W_0=0$.
We endow $\Lambda$ with the topology of uniform
convergence on compact intervals---i.e., the topology generated by the sets
of the form $\{W: |W_t - V_t| < \epsilon \text{ when } 0 \leq t
\leq T \}$ for some $V_t \in \Lambda$ and $\epsilon > 0$ and $T > 0$.  Let $\mathcal
L$ be the corresponding Borel $\sigma$-algebra.

For each $W \in \Lambda$, we may define the Loewner maps
$g_t$ from subsets of $\H$ to $\H$ via the ODE
\begin{equation}\label{e.chordal} \p_t g_t(z) =
\frac {2}{g_t(z)-W_t}\,,\qquad g_0(z)=z\ . \end{equation}

When $g_t^{-1}$ extends continuously to $W_t$, we write $\gamma(t) :=
g_t^{-1} (W_t)$.  Generally, we write $\tau(z)$ to be the largest $t$ for which
the ODE describing $g_t(z)$ is defined, and $K_t = \{z\in\closure\H : \tau(z) \leq t \}$.
So $K_t$ is a closed subset of $\closure\H$, and $g_t$ is a conformal map from $\H \setminus
K_t$ to $\H$.

Let $\Lambda_C \subset \Lambda$ be the set of $W$ for which
$\gamma: [0,\infty) \to \overline \H$ is well defined for all $t \in [0, \infty)$
and is a continuous path.  The metric on paths given by
$\dstrong(\gamma_1, \gamma_2) = \sup_{t \geq 0} d_*(\gamma_1(t), \gamma_2(t))$
induces a corresponding metric on $\Lambda_C$.



\subsection{Main results} \label{ss.mainresults}

The first of our two main
results constructs a ``zero contour line of $h$'' as the limit
of the contour lines of the approximations described above:

\begin{theorem}\label{continuumcontourtheorem}
Suppose that $\lambda=\sqrt{\pi/8}$.
As $\rr\to\infty$, the random paths $\hg_D=\phi_D\circ\gamma_D$
described above, viewed as $\Lambda_C$-valued random variables on
$(\Omega, \mathcal F)$, converge in probability (with respect to
the metric $\dstrong$ on $\Lambda_C$) to an $(\Omega, \mathcal
F)$-measurable random path $\gamma \in \Lambda_C$ that is
distributed like \SLEkk 4/. In other words, for every $\eps>0$
there is some $R=R(\eps)$ such that if $\rr>R$, then
$$
\PB{\dstrong\bigl(\gamma(t),\hg_D(t)\bigr)>\eps}<\eps\,.
$$
\end{theorem}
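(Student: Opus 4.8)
The plan is to upgrade the convergence in law already established in \cite{\DGFFpaper} to convergence in probability by exploiting the fact that all of the approximating paths $\hg_D$ are measurable functions of a single instance $h$ of the GFF. The strategy has three parts: (i) construct a candidate limit $\gamma$ directly as a path-valued function of $h$ by coupling an \SLEkk4/ curve with $h$ so that the pair enjoys a spatial Markov property; (ii) prove that in this coupling $\gamma$ is almost surely determined by $h$; and (iii) show that the discrete interfaces $\hg_D$, which are determined by the projections $h_D$ of $h$, must converge in probability to this $\gamma$. Throughout, the special value $\lambda = \sqrt{\pi/8}$ and the exponent $\kappa = 4$ enter precisely to make the coupling consistent.

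For part (i): let $\gamma$ be a chordal \SLEkk4/ in $\H$ from $0$ to $\infty$, with Loewner driving function $W_t = 2 B_t$ and maps $g_t$ as in \eqref{e.chordal}. For each $t$, let $\mathfrak h_t$ be the bounded harmonic function on $\H \setminus K_t$ with boundary values $\pm\lambda$ on $\R_\pm$ and jumping by $2\lambda$ across the slit, so that $\mathfrak h_t(y) = \lambda\,(1 - \tfrac{2}{\pi}\arg(g_t(y) - W_t))$. Applying It\^o's formula to $Z_t = g_t(y) - W_t$, the choice $\kappa = 4$ is exactly what cancels the drift in $d\arg Z_t$, so that $t \mapsto \mathfrak h_t(y)$ is a local martingale; a parallel computation shows that the quadratic variation accumulated by $\mathfrak h_t$ when paired against a test function, plus the conditional Green's energy on the slit domain $\H \setminus K_t$, reproduces the Green's energy on $\H$ precisely when $\lambda = \sqrt{\pi/8}$. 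By Proposition \ref{p.GFFdef} this identity of variances lets me build a coupling of $h$ and $\gamma$ in which, conditioned on $\gamma|_{[0,t]}$, the field $h$ is distributed as $\mathfrak h_t$ plus an independent zero-boundary GFF on $\H \setminus K_t$; equivalently, $(h,g)_\nabla - (\mathfrak h_t, g)_\nabla$ is, conditionally, a centered Gaussian with the slit-domain covariance.

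For parts (ii) and (iii): I would first show that $\gamma$ is a measurable function of $h$ in the above coupling. The martingale structure forces the conditional variance of each $(h,g)_\nabla$ given $\gamma|_{[0,t]}$ to decrease to the correct limit as $t \to \infty$; showing that the residual conditional variance of the curve given $h$ vanishes --- i.e., that $\gamma$ is ``local'' in the sense announced in Section \ref{s.local} and hence determined by $h$ --- is the crux. With $\gamma = F(h)$ established for a measurable $F$, I would finish by a coupling and subsequence argument: convergence in law from \cite{\DGFFpaper} gives tightness of the $\hg_D$, so along any subsequence I may pass (via Skorokhod) to an almost sure limit of $(h, \hg_D)$; the Markov property of the coupling is inherited by any such limit, which by the uniqueness in (ii) must equal $(h, F(h))$. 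Since every subsequential limit of the pair is the same and the limit is a function of $h$, the $\hg_D$ are Cauchy in probability and converge to $\gamma$, giving the quantitative statement with $R(\eps)$ coming from the rate of this convergence in terms of $\rr$.

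The main obstacle is step (ii): the martingale computation only produces a coupling and does not by itself say that $\gamma$ is determined by $h$ rather than by extra randomness. Closing this gap --- proving the conditional variance of the curve given the field is zero, so that $\gamma$ is genuinely a function of $h$ --- requires the local-set machinery and careful control of the slit-domain Green's function as the curve fills in. A secondary technical difficulty is transferring measurability and the Markov property across the discrete-to-continuum limit: one must verify that the discrete interfaces $\hg_D$, built from the piecewise-linear projections $h_D$, are asymptotically measurable with respect to $h$ and that their driving functions converge, for which a modulus-of-continuity and tightness estimate uniform in $D$ (as $\rr \to \infty$) is needed to justify interchanging the Loewner correspondence with the limit.
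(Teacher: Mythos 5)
Your three-part architecture matches the paper's: construct the coupling via the It\^o computation (this is exactly Lemmas \ref{l.onepointheightevolution}--\ref{contourplusGFF}), prove that $\gamma$ is determined by $h$, and pass the Markov property through a subsequential limit of the discrete interfaces. Part (i) is essentially correct and complete. But the two steps that carry the real weight are left open, and in each case the missing idea is specific enough that the proposal does not yet constitute a proof.

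For step (ii), you correctly name the determination of $\gamma$ by $h$ as the crux, but ``proving the conditional variance of the curve given the field is zero'' is a restatement of the goal, not a mechanism. The paper's mechanism is a forward/reverse trick: couple with $h$ both an \SLEkk4/ curve $\gamma^1$ from $0$ to $\infty$ and (via $-h\circ\Phi$ with $\Phi(z)=-z^{-1}$) an \SLEkk4/ curve $\gamma^2$ from $\infty$ to $0$, taken conditionally independent given $h$. Both stopped curves are local sets, so by Lemma \ref{continuumlocalunion} and Lemma \ref{doublelocal} the conditional law of $h$ given both is a GFF off their union with the expected $\pm\lambda$ boundary data; the characterization of Lemma \ref{lawofpathlemma} then forces $\gamma^1$, run until it hits $\gamma^2([0,T])$, to be an \SLEkk4/ in the slit domain targeted at $\gamma^2(T)$, hence to first hit $\gamma^2([0,T])$ at its tip. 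Running over a dense set of $T$ gives $\gamma^1=\gamma^2$ a.s., and conditional independence given $h$ then collapses the conditional law of $W$ to a point mass. Nothing in your sketch points toward this (or any substitute) argument. For step (iii), the assertion that ``the Markov property of the coupling is inherited by any such limit'' is precisely what is not automatic: to verify the hypothesis of Theorem \ref{uniquecontinuumpath} for a subsequential limit you must show that the conditional expectation of the field given the explored discrete interface converges to the harmonic function with boundary values exactly $\pm\lambda$ on the two sides of the curve. That is the content of the height gap lemma (Lemma \ref{heightgaplemma}), the hardest input from \cite{\DGFFpaper}, together with the fact that Hausdorff limits of discrete local sets are local (Lemma \ref{Hausdorfflimitislocal}); your proposal omits both. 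Finally, the conversion from convergence in law of the pair $(h,\hg_D)$ to convergence in probability is handled in the paper by the elementary topological Lemma \ref{l.weakgraphconvergence} rather than by a Skorokhod embedding, though that difference is cosmetic.
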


The theorem shows, in particular, that the random path
$\gamma$ is a.s.\ determined by $h$. It will be
called the zero contour line of $h$.
We will actually initially construct $\gamma$ in
Section \ref{machinerysection} in a way that does not
involve discrete approximations. This construction is rather interesting.
In some sense, we reverse the procedure. Rather than constructing
$\gamma$ from $h$, we start with a chordal \SLEkk4/ path $\gamma$ and
define $\tilde h$ as the field that is the GFF with
boundary values $\lambda$ in the connected component
of $\H\setminus\gamma$ having $\R_+$ on its
boundary plus a conditionally independent (given $\gamma$)
GFF with boundary values $-\lambda$ in the other connected
component of $\H\setminus\gamma$.
We then show that $\tilde h$ has the same law as $h$,
effectively giving a coupling of $h$ and $\gamma$.
Much later, in Section \ref{proofsofmainresultssection}, we will
show that in this coupling $\gamma$ is
a.s.\ determined by $h$; that is, it is equal a.s.\ to
a certain $\mathcal F$-measurable $\Lambda_C$-valued function of $h$.
We will then prove Theorem \ref{uniquecontinuumpath} (stated below),
which characterizes the zero contour line of $h$ in terms of conditional expectations.

In order to make sense of the theorem stated below, we will need the fact that
the GFF on a subdomain of $\H$
makes sense as a random distribution on $\H$---and not just a random distribution
on the subdomain. We will see in Section \ref{p.subdomainGFFdef} that the GFF on a simply
connected subdomain of $\H$ is indeed canonically defined as a random distribution on $\H$.

Consider a random variable $(\tilde h,\tilde W)$
in $(\Omega \times \Lambda, \mathcal F \times \mathcal L)$.
Let $\tilde g_t:\H\setminus\tilde K_t\to\H$ denote the Loewner evolution corresponding to
$\tilde W$.
Write \begin{equation} \label{e.ht} \tilde h_t(z):= \lambda \,\bigl( 1- 2\,\pi^{-1}\arg (\tilde g_t(z)-W_t)\bigr) .\end{equation}
(This is the harmonic function on $\H\setminus\tilde K_t$ with boundary
values $-\lambda$ on the left side of the tip of the Loewner evolution
and $+\lambda$ on the right side.)

\begin{theorem} \label{uniquecontinuumpath}
Let $D = \H$ and suppose that for some $\lambda>0$ a random variable $(\tilde h,\tilde W)$
in $(\Omega \times \Lambda, \mathcal F \times \mathcal L)$ satisfies the following conformal Markov property.
For every fixed $T\in[0,\infty)$ we
have that given the restriction of $\tilde W_t$ to $[0,T]$, a regular
conditional law for $\tilde h$ on $H_s(\H\setminus\tilde K_T)$
is given by $\tilde h_T$, as in (\ref{e.ht}), plus a zero boundary GFF on $\H\setminus\tilde K_T$.
Then the following hold
\begin{enumerate}
\item $\lambda = \sqrt{\pi/8}$.
\item The trace of the Loewner evolution is almost surely a path $\tilde\gamma$
with the law of an \SLEkk 4/ (i.e., $\tilde W$ is $2$ times a standard Brownian motion).
\item \label{i.Wdetermined} Conditioned on $\tilde h$, the function $\tilde W$ is
almost surely completely determined (i.e., there
exists a $\Lambda$-valued function on $\Omega$ such that
$\tilde W$ is almost surely equal to the value of that function applied to $\tilde h$).
\item \label{i.samepathasincontinuumcontoutheorem} The pair $(\tilde h,\tilde\gamma)$ has
the same law as the pair $(h,\gamma)$ from Theorem~\ref{continuumcontourtheorem}.
\end{enumerate}
\end{theorem}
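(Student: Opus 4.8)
The plan is to extract all four conclusions from the single martingale produced by the conformal Markov property. Fix a test function $\rho\in H_s(\H)$ with compact support, let $\tau_\rho$ be the first time the Loewner hull $\tilde K_t$ meets the support of $\rho$, and set $\mathcal F_t=\sigma(\tilde W|_{[0,t]})$. For $t\le\tau_\rho$ the support of $\rho$ lies in $\H\setminus\tilde K_t$, so the hypothesis identifies the conditional law of $(\tilde h,\rho)$ given $\mathcal F_t$: its mean is $(\tilde h_t,\rho)$ and its variance is $\int\!\!\int\rho(x)\rho(y)\,G_{\H\setminus\tilde K_t}(x,y)\,dx\,dy$, where $G_{\H\setminus\tilde K_t}$ is Green's function of the subdomain. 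Consequently $M^\rho_t:=(\tilde h_t,\rho)=\E[(\tilde h,\rho)\mid\mathcal F_t]$ is a martingale, and $\E[(\tilde h,\rho)^2\mid\mathcal F_t]=(M^\rho_t)^2+\int\!\!\int\rho\rho\,G_{\H\setminus\tilde K_t}$ is a martingale as well. Before doing any Itô calculus I would first show that $\tilde W$ is itself a continuous martingale: expanding $\tilde h_t(z)=\lambda\bigl(1-\tfrac2\pi\arg(\tilde g_t(z)-\tilde W_t)\bigr)$ for large $z$ gives $\tilde h_t(z)=\tilde h_0(z)+\tfrac{2\lambda}{\pi}\tilde W_t\,\Im\tfrac1z+O(z^{-2})$, so pairing with test functions concentrated near $\infty$ exhibits $\tilde W_t$ (up to a constant factor) as a limit of the martingales $M^\rho_t-M^\rho_0$; hence $\tilde W$ is a continuous local martingale with a well-defined quadratic variation $\langle\tilde W\rangle_t$.

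Writing $u_t(z):=\tilde g_t(z)-\tilde W_t$ and applying the Itô formula to the harmonic (hence conformal) function $\tilde h_t(z)$, using $\partial_t\tilde g_t(z)=2/u_t(z)$ together with the martingale $\tilde W$, I would find that the drift of $\tilde h_t(z)$ is a constant multiple of $\Im\bigl[(2-\tfrac12\,d\langle\tilde W\rangle_t/dt)\,u_t(z)^{-2}\bigr]$. Since $\Im[u^{-2}]$ is not identically zero on $\H$ and the coefficient does not depend on $z$, the requirement that $M^\rho_t$ be drift-free for every $\rho$ forces $d\langle\tilde W\rangle_t=4\,dt$. Combined with the previous paragraph this gives $\tilde W=2B$ for a standard Brownian motion $B$, i.e.\ the trace is \SLEkk4/, which is conclusion~2. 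Note that this drift condition pins down $\kappa=4$ and is completely independent of $\lambda$; the value of $\lambda$ must therefore come from a second-moment computation.

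To obtain $\lambda$ (conclusion~1) I would match the two finite-variation parts of the martingale $\E[(\tilde h,\rho)^2\mid\mathcal F_t]=(M^\rho_t)^2+\int\!\!\int\rho\rho\,G_{\H\setminus\tilde K_t}$. On one hand, with $d\langle\tilde W\rangle_t=4\,dt$ the Itô formula gives the martingale part $dM^\rho_t=\tfrac{4\lambda}{\pi}\bigl(\int\rho(z)\,\Im\tfrac1{u_t(z)}\,dz\bigr)dB_t$, so $d\langle M^\rho\rangle_t=\tfrac{16\lambda^2}{\pi^2}\bigl(\int\rho\,\Im\tfrac1{u_t}\bigr)^2dt$. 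On the other hand, by conformal invariance $G_{\H\setminus\tilde K_t}(x,y)=G_\H(\tilde g_t(x),\tilde g_t(y))$, and a Hadamard-type computation (differentiating $G_\H(a,b)=\tfrac1{2\pi}\log|(\bar a-b)/(a-b)|$ along the Loewner flow) yields $\partial_t G_{\H\setminus\tilde K_t}(x,y)=-\tfrac2\pi\,\Im\tfrac1{u_t(x)}\,\Im\tfrac1{u_t(y)}$. Since the drift of the martingale $\E[(\tilde h,\rho)^2\mid\mathcal F_t]$ must vanish, the quadratic-variation rate of $M^\rho$ must exactly cancel $\partial_t\int\!\!\int\rho\rho\,G_{\H\setminus\tilde K_t}$; equating the two expressions gives $\tfrac{16\lambda^2}{\pi^2}=\tfrac2\pi$, hence $\lambda^2=\pi/8$.

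The genuinely hard part is conclusion~3, that $\tilde W$ (equivalently $\tilde\gamma$) is almost surely a measurable function of the field $\tilde h$ alone; this asserts that the coupling is deterministic in the field-to-path direction, and it cannot be reached by the moment computations above. My plan here is to take two \SLEkk4/ paths $\tilde\gamma,\tilde\gamma'$ that are conditionally independent given a common field $\tilde h$ --- each coupled to $\tilde h$ through the Markov property --- and to prove $\tilde\gamma=\tilde\gamma'$ almost surely; since the conditional law of $\tilde\gamma$ given $\tilde h$ then carries no randomness, $\tilde\gamma$ is a function of $\tilde h$. Establishing the a.s.\ coincidence of the two paths is where the locality of $\gamma$ developed in Section~\ref{s.local} enters: using that each path is a local set one argues that neither can cross the other and that the boundary data of the field along them forces them to merge. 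I expect this to be the main obstacle, and it is exactly the part deferred to Section~\ref{proofsofmainresultssection}. Finally, conclusion~4 follows by assembling the pieces: taking $T=0$ in the hypothesis shows the marginal law of $\tilde h$ is the GFF with boundary data $\pm\lambda=\pm\sqrt{\pi/8}$, which is the law of $h$ in Theorem~\ref{continuumcontourtheorem}; by conclusion~3 the path is a fixed measurable function of the field; and since that function is characterized by the same Markov property in both constructions, the joint law of $(\tilde h,\tilde\gamma)$ coincides with that of $(h,\gamma)$.
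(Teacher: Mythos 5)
Your derivations of conclusions 1 and 2 are correct in substance but follow a genuinely different route from the paper: you first upgrade $\tilde W$ to a continuous local martingale via the expansion of $\tilde h_t$ at infinity and then run It\^o calculus on $\log u_t$ to force $d\langle\tilde W\rangle_t=4\,dt$, whereas the paper (Lemma \ref{lawofpathlemma}) deliberately avoids assuming $\tilde W$ is a semimartingale: it shows instead that the one-point function $\tilde h_t(z)$ is a Brownian motion when parameterized by the decrease of $-E_t(\density)$ (using the elementary ``if $X$ and $X+Y$ are Gaussian and independent then $Y$ is Gaussian'' step), passes to radial coordinates where $\tilde h_t(z)$ is an affine function of $\widehat W_t-\widehat O_t$, and lets the radial Loewner equation determine the law of $W$. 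Your route is cleaner but has a real technical debt at the first step: the $O(z^{-2})$ remainder in the expansion involves $t$ and $\tilde W_t^2$, so extracting ``$\tilde W$ is a martingale'' from the bounded martingales $M^\rho_t$ requires integrability of $\tilde W_t$ that you have not established; this is exactly the regularity issue the paper's argument is built to sidestep. Your $\lambda$-computation (matching $d\langle M^\rho\rangle_t=\tfrac{16\lambda^2}{\pi^2}(\int\rho\,\Im u_t^{-1})^2dt$ against Hadamard's formula $\partial_tG_t=-\tfrac2\pi\Im u_t(x)^{-1}\Im u_t(y)^{-1}$) is the same computation as Lemmas \ref{l.dG}--\ref{multipointheight} and is fine.

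The genuine gap is conclusion 3. You correctly identify the strategy class (two conditionally independent couplings given the same field, plus locality), but the mechanism you gesture at --- two conditionally independent \emph{forward} \SLEkk4/ paths from $0$ to $\infty$, ``neither can cross the other and the boundary data forces them to merge'' --- does not close as stated: two non-crossing simple curves from $0$ to $\infty$ coupled as zero-level lines could a priori be distinct with one lying weakly to the left of the other, and the field in the region between them would simply be a GFF with $+\lambda$/$-\lambda$ data on its two sides, which is not by itself a contradiction. The paper's key idea, which is absent from your sketch, is to take the second path to be the \emph{time-reversal}: couple $V$ to $-\tilde h\circ\Phi$ with $\Phi(z)=-1/z$, so that $\gamma^2$ runs from $\infty$ to $0$. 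Then for any $T$, the set $\gamma^2([0,T])$ is local and attached to $\infty$, and Lemmas \ref{doublelocal} and \ref{lawofpathlemma} identify the conditional law of $\gamma^1$ given $\gamma^2([0,T])$ as an \SLEkk4/ in $\H\setminus\gamma^2([0,T])$ from $0$ \emph{targeted at the tip} $\gamma^2(T)$ (because the $\pm\lambda$ boundary data on the slit domain jumps precisely at that tip). Hence $\gamma^1$ a.s.\ first meets $\gamma^2([0,T])$ at $\gamma^2(T)$; running over a countable dense set of $T$ shows each path passes through a dense subset of the other, so they coincide, and conditional independence given $\tilde h$ then collapses the conditional law of $W$ to a point mass. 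Without the reversal (or some equivalent retargeting device) the merging step has no engine.
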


Another derivation of the \SLEk/-GFF coupling in Theorem \ref{uniquecontinuumpath} appears in \cite{MR2525778}, which references our work in progress and also explores relationships between this coupling and continuum partition functions, Laplacian determinants, and the Polyakov-Alvarez conformal anomaly formula.  The couplings in \cite{S05,MR2525778} are in fact more general than those given here; they are defined for general $\kappa$, and are characterized by conformal Markov properties similar to those in Theorem \ref{uniquecontinuumpath}.  The \SLEk/ curves in these couplings are local sets (in the sense of Section \ref{s.local}) and are interpreted in \cite{S05} as ``flow lines of $e^{ih}$'' where $h$ is a multiple of the GFF and $e^{ih}$ is viewed as a complex unit vector field (which is well defined when $h$ is smooth, and in a certain sense definable even when $h$ is a multiple of the GFF).  Indeed, these examples were the primary motivation for our definition of local sets.

\section{Coupling SLE and the GFF} \label{machinerysection}



\old{For every $t\in[0,T]$,
there is a unique conformal homeomorphism
$g_t:\H\setminus\gamma[0,t]$ which satisfies the so-called {\bf
hydrodynamic} normalization at infinity
$$
\lim_{z\to\infty} g_t(z)-z=0\,.
$$
The limit
$$
\capacity(\gamma[0,t]):=\lim_{z\to\infty} z(g_t(z)-z)/2
$$
is real and monotone increasing in $t$. It is called the (half
plane) {\bf capacity} of $\gamma[0,t]$ from $\infty$, or just
capacity, for short. Since $\capacity(\gamma[0,t])$ is also
continuous in $t$, it is natural to reparameterize $\gamma$ so
that $\capacity(\gamma[0,t])=t$. Loewner's theorem states that in
this case the maps $g_t$ satisfy his differential equation
\begin{equation}\label{e.chordal} \p_t g_t(z) =
\frac {2}{g_t(z)-W_t}\,,\qquad g_0(z)=z\,, \end{equation} where
$W(t)=g_t(\gamma(t))$. (Since $\gamma(t)$ is not in the domain of
definition of $g_t$, the expression $g_t(\gamma(t))$ should be
interpreted as a limit of $g_t(z)$ as $z\to\gamma(t)$ inside
$\H\setminus\gamma[0,t]$.  This limit does exist.)   The function
$W(t)$ is continuous in $t$, and is called the {\bf driving
parameter} for $\gamma$.

One may also try to reverse the above procedure. Consider the
Loewner evolution defined by the ODE~\eref{e.chordal}, where
$W_t=W(t)$ is a continuous, real-valued function. The path of the
evolution is defined as $\gamma(t)=\lim_{z\to W_t}g_t^{-1}(z)$,
where $z$ tends to $W_t$ from within the upper half plane $\H$,
provided that the limit exists.
Even if the limit exists, it might hit $\p\H$
and might intersect itself. The process (chordal)
\SLEkk\kappa/ in the upper half plane, beginning at $0$ and ending
at $\infty$, is the path $\gamma(t)$ when $W_t$ is
$\sqrt\kappa\,B_t$, where $B_t=B(t)$ is a standard one-dimensional
Brownian motion. (\lq\lq Standard\rq\rq\ means $B(0)=0$ and
$\E[B(t)^2]=t$, $t\ge 0$. Since $(\sqrt k\,B_t:t\ge 0)$ has the
same distribution as $(B_{\kappa\,t}:t\ge 0)$, taking
$W_t=B_{\kappa\,t}$ is equivalent.) In this case a.s.\ $\gamma(t)$
does exist and is a continuous path. See~\cite{\RSsle} ($\kappa\ne
8$) and~\cite{\LSWlesl} ($\kappa=8$).
The path $\gamma$ is simple and contained in $\H\cup\{0\}$ if
and only if $\kappa\le 4$ \cite{\RSsle}.
}

\subsection{GFF on subdomains: projections and restrictions} \label{s.subdomain}


\begin{proposition} \label{p.projectionisadistribution}
Suppose that $H'(D)$ is a closed subspace of $H(D)$, that $P$
is the orthogonal projection onto that subspace, and that $\{f_j\}$
is an orthonormal basis for $H(D)$.  Let $\alpha_j$ be i.i.d.\
mean zero, unit variance normal random variables.  Then the sum $\sum P(\alpha_j f_j)$
converges almost surely in the space of distributions on $D$.  The law of the limit
is the unique law of a random distribution $h$ on $D$ with the property that for each $\density \in H_s(D)$,
the random variable $(h, \density)$ is a centered Gaussian with variance
$||P (-\Delta^{-1} \density)||^2_\nabla$.
\end{proposition}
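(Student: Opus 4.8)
The plan is to mirror the construction of the GFF itself (as described before Proposition \ref{p.GFFdef}) but applied to the projected series $\sum_j P(\alpha_j f_j)$. The key observation is that for a fixed test function $\density \in H_s(D)$, pairing with $\density$ in the distributional sense is a \emph{continuous linear functional on $H(D)$} when restricted to the range of $P$. Indeed, for $g \in H(D)$ we have, by the discussion following \eqref{e.greencovariance}, that $(g, \density) = (g, -\Delta^{-1}\density)_\nabla$, so pairing against $\density$ is the same as taking the Dirichlet inner product against the fixed element $-\Delta^{-1}\density \in H(D)$. Thus I would first establish the identity
\begin{equation}\label{e.pairingidentity}
\bigl(P(\alpha_j f_j), \density\bigr) = \alpha_j\,\bigl(f_j, P(-\Delta^{-1}\density)\bigr)_\nabla,
\end{equation}
which uses that $P$ is self-adjoint (orthogonal projection) together with the definition of distributional pairing. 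Summing \eqref{e.pairingidentity} over $j$ gives a series of independent centered Gaussians whose variances sum to $\sum_j (f_j, P(-\Delta^{-1}\density))_\nabla^2 = \|P(-\Delta^{-1}\density)\|_\nabla^2$ by Parseval, since $\{f_j\}$ is an orthonormal basis. This shows the partial sums $(\sum_{j=1}^N P(\alpha_j f_j), \density)$ converge almost surely (and in $L^2$) to a centered Gaussian with the claimed variance.

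Next I would upgrade this pointwise-in-$\density$ convergence to convergence in the space of distributions, i.e.\ show that almost surely the limiting functional $\density \mapsto \lim_N (\sum_{j=1}^N P(\alpha_j f_j), \density)$ exists simultaneously for all $\density \in H_s(D)$ and is a continuous linear functional on $H_s(D)$. The natural route is to cite the same standard Gaussian-process argument already invoked for the unprojected GFF \cite{\GFFSurvey}: the only input that argument needs is that the variance functional $\density \mapsto \|P(-\Delta^{-1}\density)\|_\nabla^2$ is dominated by the corresponding functional $\density \mapsto \|-\Delta^{-1}\density\|_\nabla^2$ for the full GFF, which is immediate since $P$ is a contraction ($\|P v\|_\nabla \leq \|v\|_\nabla$). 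Hence $\sum_j P(\alpha_j f_j)$ is almost surely dominated, test-function by test-function, by a genuine GFF instance, and the continuity/measurability of the limiting functional follows from the established case.

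Finally, uniqueness of the law follows exactly as in Proposition \ref{p.GFFdef}: a random distribution whose pairings $(h,\density)$ are jointly centered Gaussian is determined by its covariance structure, and here the covariance of $(h,\density_1)$ and $(h,\density_2)$ is $\bigl(P(-\Delta^{-1}\density_1), P(-\Delta^{-1}\density_2)\bigr)_\nabla$ by polarizing the variance formula; two random distributions with the same Gaussian covariance on the generating family $\{(\cdot,\density) : \density \in H_s(D)\}$ have the same law on $\mathcal F$. I expect the main obstacle to be the second step --- the passage from almost-sure convergence for each \emph{fixed} $\density$ to a single almost-sure event on which the limit exists for \emph{all} $\density$ and defines a continuous functional. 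For the unprojected field this is precisely the delicate point handled in \cite{\GFFSurvey}, and the cleanest approach is to reduce to that case via the domination $\|P(-\Delta^{-1}\density)\|_\nabla \leq \|-\Delta^{-1}\density\|_\nabla$ rather than repeat the estimate, so that no new analytic work beyond invoking conformal/functional-analytic boundedness of $P$ is required.
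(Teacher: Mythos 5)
Your first step (the identity $(P(\alpha_j f_j),\density)=\alpha_j\,(f_j,P(-\Delta^{-1}\density))_\nabla$ and the Parseval computation of the variance) and your uniqueness argument both coincide with what the paper does. The gap is in the middle step, exactly at the point you yourself flag as delicate. The inference ``the variance functional of the projected series is dominated by that of the full GFF, hence $\sum_j P(\alpha_j f_j)$ is almost surely dominated, test-function by test-function, by a genuine GFF instance'' is not valid: variance domination is a statement about the \emph{laws} of the one-dimensional marginals, not a pathwise bound, and there is no coupling under which $|(\sum_j P(\alpha_j f_j),\density)|\le|(\sum_j\alpha_j f_j,\density)|$ holds simultaneously for all $\density$. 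Knowing that each marginal of the projected series is a Gaussian of smaller variance than the corresponding marginal of the GFF does not, by itself, transfer the almost-sure \emph{simultaneous} convergence (on a single full-measure event, for all test functions at once, with a continuous limiting functional) from the unprojected series to the projected one; that transfer is precisely what needs proving, and ``$P$ is a contraction'' applied test-function by test-function does not supply it.

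The paper closes this step by running the abstract Wiener space construction directly \emph{on the subspace} rather than by comparison with the full field: if $T$ is the Hilbert--Schmidt operator on $H(D)$ from \cite{\GFFSurvey} whose associated norm $\|f\|_*^2=(Tf,f)_\nabla$ has completion contained in the space of distributions, then $T'=PT$ is again Hilbert--Schmidt and $(T'f,f)_\nabla$ is a measurable norm on $H'(D)$, so Gross's theorem applies verbatim to give almost sure convergence of the projected series. If you insist on a comparison-style argument, a correct version exists but requires an actual Banach-space-valued Gaussian comparison theorem (covariance domination of centered Gaussian vectors yields concentration comparison via an Anderson-type inequality, combined with the It\^o--Nisio theorem to upgrade tightness of the partial sums to almost sure convergence); that is a genuine theorem with its own hypotheses, not a consequence of the pointwise contractivity you invoke. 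Either way, some structural input about the subspace beyond the marginal variance bound is required, and your write-up currently substitutes an assertion for it.
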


\proof For each fixed $\density \in H_s(D)$, the fact that
the partial sums of
\begin{multline*}
  \Bigl(\sum P(\alpha_j f_j), \density\Bigr)
= \Bigl(\sum P(\alpha_j f_j), - \Delta^{-1} \density\Bigr)_\nabla
\\
=
\Bigl(\sum \alpha_j f_j, P(-\Delta^{-1}\density) \Bigr)_\nabla
=
\sum \alpha_j \Bigl(f_j, P(-\Delta^{-1}\density) \Bigr)_\nabla
\end{multline*}
converge to a random variable with variance $||P (-\Delta^{-1} \density)||^2_\nabla$ is
immediate from the fact that $P(-\Delta^{-1}\density) \in H(D)$.

To complete the proof, we recall the approach of \cite{\GFFSurvey} (which uses earlier
work in \cite{\Gross}) for showing
that $\sum \alpha_j f_j$ converges in the space of distributions.  Consider
a norm $\| \cdot \|_*$ of the form $\|f\|_*^2 = (Tf, f)_\nabla$ where $T$ is
a {\bf Hilbert-Schmidt operator} mapping $H(D)$ to itself---i.e., for each basis
$\{f_j\}$ of $H(D)$ we have $\sum \|T f_j\|_\nabla^2 < \infty$.
(The sum is independent of the choice of basis.)  Then Gross's classical
{\bf abstract Wiener space} result implies that the closure of $H(D)$
with respect to the norm $\|\cdot \|_*$ is a space in which $\sum \alpha_j f_j$
converges almost surely \cite{\Gross}.  In \cite{\GFFSurvey} a Hilbert-Schmidt operator $T$ was given
such that this closure was contained in the space of distributions (and such that
convergence in the norm implied weak convergence in the space of distributions).
For our purposes, it suffices to observe that if $T$ is a Hilbert-Schmidt operator
on $H(D)$ then $T' = P T$ is a Hilbert-Schmidt operator and $\|f \|_*' = (T'f,f)_\nabla$
is a measurable norm on $H'(D)$.  The existence and uniqueness of the random distribution described
in the proposition statement then follows by the standard abstract Wiener space
construction in \cite{\Gross} as described in \cite{\GFFSurvey}.
\qed

For any deterministic open subset $B$ of $D$, we denote by
$\Supp_B$ the closure in $H(D)$ of the
space of $C^\infty$ functions compactly supported on $B$ and we denote
by $\Harm_B$ the orthogonal complement of $\Supp_B$, so that
$$H(D) = \Supp_B \oplus \Harm_B.$$
Note that since $(f,g)_\nabla = (-\Delta f, g)$, a smooth function $f \in H(D)$
satisfies $f \in \Harm_B$ if and only if $-\Delta f=0$ (so
that $f$ is harmonic) on $B$.  In general, if $f$ is any distribution on $D$,
we say that $f$ is {\bf harmonic} on $B$ if $(-\Delta f, g) := (f, -\Delta g) = 0$
for all $g \in \Supp_B$.  By this definition $\Harm_B$ consists of
precisely of those elements of $H(D)$ which are harmonic on $B$.  If $f$ is a distribution
which is harmonic on $B$, then the restriction of $f$ to $B$ may be
viewed as an actual harmonic function---i.e., there is a harmonic function $\tilde f$ on $B$
such that for all $\density \in H_s(B)$ we have $(f,\density) = (\tilde f, \density)$.
We may construct $\tilde f$ explicitly as follows.  For each $z \in B$, write
$\tilde f(z) = (f, \density)$
where $\density$ is any positive radially symmetric bump function centered at $z$ whose integral
is one and whose support lies in $B$.  By harmonicity, this value is the same for any $\density$
with these properties, since the difference of two such $\density$
is the Laplacian of a (radially symmetric about $z$) function in $H_s(B)$.
The fact that this function $\tilde f$ is harmonic is easy to verify from its definition.

Suppose that $B$ is a subdomain of a simply connected planar domain $D$.  It will often be useful
for us to interpret the GFF with zero boundary conditions on $B$ as a random distribution on $D$.
This means that we have to make sense of $(h, \density)$ when $\density \in H_s(D)$ but we
do not necessarily have $\density \in H_s(B)$.
The following is immediate from Proposition \ref{p.projectionisadistribution}, taking $H'(D)$
to be $\Supp_B$.

\begin{proposition} \label{p.subdomainGFFdef}
Let $B$ be a subdomain of a planar domain $D$.  There is a unique law for a random distribution
$h_B$ on $D$ with the property that for each
$\density \in H_s(D)$ the random variable $(h_B,\density)$
is a mean zero Gaussian with variance
$$ \int_{B \times B} \density(x)\, \density(y)\, G_{B}(x,y) \, dx \, dy.$$
\end{proposition}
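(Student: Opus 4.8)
The plan is to deduce the result from Proposition~\ref{p.projectionisadistribution} by taking $H'(D) = \Supp_B$, exactly as the preceding sentence suggests, and then to check that the variance $\|P(-\Delta^{-1}\density)\|^2_\nabla$ furnished by that proposition (with $P$ the orthogonal projection of $H(D)$ onto $\Supp_B$) equals the Green's-function expression $\int_{B\times B}\density(x)\density(y)G_B(x,y)\,dx\,dy$. Existence and uniqueness of the law of $h_B$ are then automatic from Proposition~\ref{p.projectionisadistribution}; once the two variance formulas are seen to coincide there is nothing further to prove, since the variance already determines such a centered Gaussian family uniquely.

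The heart of the matter is the identification, for fixed $\density \in H_s(D)$ and $f := -\Delta^{-1}\density \in H(D)$, of the projection $Pf$ with the Green's operator of $B$ applied to $\density$; that is, I would show $$Pf = \int_B G_B(\cdot, y)\,\density(y)\,dy.$$ Writing $u$ for the right-hand side, the idea is to recognize $f = u + (f - u)$ as the orthogonal decomposition of $f$ relative to $H(D) = \Supp_B \oplus \Harm_B$. This requires two things: that $u \in \Supp_B$, and that $f - u \in \Harm_B$. The latter is the easy half: since $-\Delta u = \density$ in $B$ and $-\Delta f = \density$ throughout $D$, the difference $f - u$ is harmonic in $B$, and by the characterization of $\Harm_B$ recorded above this places $f - u \in \Harm_B$. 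Uniqueness of the orthogonal decomposition then forces $Pf = u$.

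Granting this identification, the variance computation is a short duality argument. Orthogonality of $P$ gives $\|Pf\|^2_\nabla = (Pf, f)_\nabla$, and the definition of the distributional pairing yields $(Pf, f)_\nabla = (Pf, -\Delta^{-1}\density)_\nabla = (Pf, \density) = \int_B Pf(x)\,\density(x)\,dx$, where the last step uses that $Pf \in \Supp_B$ is supported in $\overline B$. Substituting $Pf = \int_B G_B(\cdot, y)\density(y)\,dy$ then produces exactly $\int_{B\times B}\density(x)\density(y)G_B(x,y)\,dx\,dy$, matching the claimed variance.

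I expect the main obstacle to be the first half of the identification, namely that $u = \int_B G_B(\cdot,y)\density(y)\,dy$ genuinely lies in $\Supp_B$. The subtlety is that $\density|_B$ need not be compactly supported \emph{in $B$}, since the support of $\density$ may reach $\partial B$; hence I cannot simply quote the formulas for the Green's operator acting on test functions in $H_s(B)$. Instead I would observe that $\density|_B$ is a compactly supported element of $L^2(B)$ (its support lies in the compact set $\Supp(\density)\cap\overline B$), so that its Green's potential $u$ has finite Dirichlet energy $\int_{B\times B}G_B(x,y)\density(x)\density(y)\,dx\,dy$ and hence lies in $H^1_0(B)$; under extension by zero this space is isometrically identified with $\Supp_B$. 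Once this membership is in hand, all remaining steps are routine.
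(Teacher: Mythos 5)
Your proposal follows exactly the route the paper intends: the paper states that the proposition ``is immediate from Proposition \ref{p.projectionisadistribution}, taking $H'(D)$ to be $\Supp_B$,'' and offers no further argument, while you correctly supply the missing verification that $P(-\Delta^{-1}\density)$ is the Green's potential $\int_B G_B(\cdot,y)\density(y)\,dy$ via the orthogonal decomposition against $\Harm_B$. Your identification and variance computation are correct, so this is the same approach with the implicit details filled in.
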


If we restrict the pairing $(h, \cdot)$ of the above proposition to functions
on $H_s(B)$, then by definition $h$ is the GFF on $B$ with zero boundary conditions.
The projection of the GFF onto $\Harm_B$ is a random distribution which
is almost surely harmonic on $B$.  Applying Proposition \ref{p.projectionisadistribution}
to $\Harm_B$ gives the following straightforward analog of Proposition \ref{p.subdomainGFFdef}.  (Here $\Delta_B$
denotes the Laplacian restricted to $B$, and $\Delta_B^{-1} \density$ has Laplacian $\density$ on $B$ and zero
boundary conditions on $\partial B$.)

\begin{proposition} \label{p.harmdist}
Let $B$ be a subdomain of a planar domain $D$.  There is a unique random distribution
$h^*_B$ on $D$ with the property that for each
$\density \in H_s(D)$ the random variable $(h^*_B,\density)$
is a mean zero Gaussian with variance
\begin{equation} (\density, -\Delta^{-1} \density)
-(\density, -\Delta_{B}^{-1} \density)_{B}.
\end{equation}
An instance of the GFF on $D$ may be written as $h = h_B + h^*_B$ where $h_B$
is the zero boundary GFF on $B$ and $h^*_B$ and $h_B$ are independent.
\end{proposition}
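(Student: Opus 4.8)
The plan is to obtain $h^*_B$ as the projection of the GFF onto $\Harm_B$ and to read off its variance from the orthogonal decomposition $H(D)=\Supp_B\oplus\Harm_B$. Concretely, I would first apply Proposition~\ref{p.projectionisadistribution} with $H'(D)=\Harm_B$ and $P=P_{\Harm_B}$ the orthogonal projection onto this subspace. That proposition immediately furnishes a random distribution $h^*_B$, unique in law, such that for each $\density\in H_s(D)$ the variable $(h^*_B,\density)$ is a centered Gaussian with variance $\|P_{\Harm_B}(-\Delta^{-1}\density)\|_\nabla^2$. The entire content of the first assertion then reduces to checking that this variance equals $(\density,-\Delta^{-1}\density)-(\density,-\Delta_B^{-1}\density)_B$.

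Second, I would carry out that identification. Writing $f:=-\Delta^{-1}\density\in H(D)$, the Pythagorean identity for the orthogonal splitting $f=P_{\Supp_B}f+P_{\Harm_B}f$ gives
$$\|P_{\Harm_B}f\|_\nabla^2=\|f\|_\nabla^2-\|P_{\Supp_B}f\|_\nabla^2.$$
The first term is $\|f\|_\nabla^2=(-\Delta f,f)=(\density,-\Delta^{-1}\density)$, exactly as in \eqref{e.greencovariance}. For the second term, the key observation is that $P_{\Supp_B}f$, extended by zero outside $B$, agrees on $B$ with $-\Delta_B^{-1}\density$: indeed $P_{\Harm_B}f=f-P_{\Supp_B}f$ is harmonic on $B$, so $-\Delta(P_{\Supp_B}f)=-\Delta f=\density$ on $B$, while $P_{\Supp_B}f\in\Supp_B$ has zero boundary values on $\partial B$. (This is precisely where one uses that the projection $P_{\Supp_B}$ makes sense even though $\density\in H_s(D)$ need not be supported in $B$; only $\density|_B$ enters.) Hence, by integration by parts on $B$, $\|P_{\Supp_B}f\|_\nabla^2=(-\Delta P_{\Supp_B}f,P_{\Supp_B}f)=(\density,-\Delta_B^{-1}\density)_B$, which completes the variance computation. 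This also reconciles the two forms of the variance of the zero-boundary GFF on $B$: the present $(\density,-\Delta_B^{-1}\density)_B$ and the Green's-function expression $\int_{B\times B}\density(x)\,\density(y)\,G_B(x,y)\,dx\,dy$ of Proposition~\ref{p.subdomainGFFdef} coincide by the same integration-by-parts argument that produced \eqref{e.greencovariance}, now applied on $B$.

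Finally, for the decomposition $h=h_B+h^*_B$, I would start from a series representation $h=\sum_j\alpha_j f_j$ of the GFF and apply the identity $P_{\Supp_B}+P_{\Harm_B}=\mathrm{id}$ term by term, so that $h=\sum_j P_{\Supp_B}(\alpha_j f_j)+\sum_j P_{\Harm_B}(\alpha_j f_j)=h_B+h^*_B$, where $h_B$ is the $\Supp_B$-projection, which is the zero-boundary GFF on $B$ by Proposition~\ref{p.subdomainGFFdef} (taking $H'(D)=\Supp_B$). Independence is then a soft consequence of joint Gaussianity: both $(h_B,\density_1)$ and $(h^*_B,\density_2)$ are almost-sure limits of linear combinations of the $\alpha_j$, hence jointly Gaussian, and their covariance is $\bigl(P_{\Supp_B}(-\Delta^{-1}\density_1),\,P_{\Harm_B}(-\Delta^{-1}\density_2)\bigr)_\nabla=0$ by orthogonality of the two subspaces; uncorrelated jointly Gaussian fields are independent.

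I expect the only genuine obstacle to be the middle step---pinning down that the $\Supp_B$-projection is exactly the Dirichlet Green's operator $-\Delta_B^{-1}$ on $B$---since one must argue cleanly that $P_{\Supp_B}f$ has the correct (zero) boundary behaviour and the correct Laplacian despite $\density$ possibly carrying mass outside $B$. Everything else (existence and uniqueness via Proposition~\ref{p.projectionisadistribution}, and independence via Gaussianity) is essentially formal once this identification is in hand.
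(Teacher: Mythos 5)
Your proposal is correct and follows exactly the route the paper intends: the paper gives no explicit proof, merely asserting that the statement follows by applying Proposition~\ref{p.projectionisadistribution} with $H'(D)=\Harm_B$, which is your first step. Your identification of $P_{\Supp_B}(-\Delta^{-1}\density)$ with $-\Delta_B^{-1}\density$ on $B$ via the Pythagorean identity, and the independence argument via orthogonality and joint Gaussianity, correctly supply the details the paper leaves to the reader.
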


Although the above defines $h^*_B$ as a random distribution and not as a function, we may, as
discussed above, consider the restriction of $h^*_B$ to $B$
as a harmonic function.  This function intuitively represents
the conditional expectation of $h$ at points of $B$ {\em given} the values of $h$ off of $B$.

\subsection{Constructing the coupling} \label{heightevolutionsection}

The overall goal of the paper is to recognize \SLEkk4/ as an
interface determined by the GFF.  In this subsection we show how
to start with an \SLEkk4/ and use it to explicitly construct an instance
of the GFF such that the resulting coupling of \SLEkk4/ and the GFF
satisfies the hypothesis of Theorem \ref{uniquecontinuumpath}.

For any Loewner evolution $W_t$ on the half plane $\H$ we may write
$f_t(z) := g_t(z) - W_t$. Since $\arg z$ is a harmonic
function on $\H$ with boundary values $0$ on $(0,\infty)$ and
$\pi$ on $(-\infty, 0)$, the value
$\pi^{-1}\arg f_t(z)$ is the probability that a two dimensional
Brownian motion starting at $z$ first exits
$\H\setminus\gamma[0,t]$ either in $(-\infty,0)$ or on the left
hand side of $\gamma[0,t]$. In other words, for fixed $t$, the
function $\pi^{-1} \arg f_t(z)$ is the bounded harmonic function
on $\H\setminus\gamma[0,t]$ with boundary values given by $1$ on
the left side of the tip $\gamma(t)$ and $0$ on the right side.

\begin{lemma} \label{l.onepointheightevolution}
Let $B_t$ be a standard $1$-dimensional Brownian motion,
and let $W_t=2\,B_t$ be the driving parameter of an \SLEkk 4/
evolution $g_t$. Set $f_t(x):=g_t(z)-W_t$
and $h_t:= \lambda \left( 1- 2\,\pi^{-1}\arg (f_t)\right)$.  Then
for each fixed $z \in \H$, \begin{eqnarray} \label{onepointheightevolution} dh_t(z) =
\frac{4 \lambda}{\pi} \,\Im (f_t(z)^{-1})\,dB_t, \end{eqnarray}
(in the sense of It\^o differentials)
and $h_t(z)$ is a martingale.
\end{lemma}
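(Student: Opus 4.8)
The plan is to compute the It\^o differential of $h_t(z)$ directly by applying It\^o's formula to the function $h_t(z)=\lambda(1-2\pi^{-1}\arg f_t(z))$, where $f_t(z)=g_t(z)-W_t$ and $W_t=2B_t$. Since $\arg f = \Im \log f$, the natural first step is to track the evolution of $\log f_t(z)$, or equivalently of $f_t(z)$ itself, as a complex-valued It\^o process. From the Loewner equation \eqref{e.chordal} we have $\p_t g_t(z) = 2/f_t(z)$, and since $W_t = 2B_t$ is a (scaled) Brownian motion with $dW_t = 2\,dB_t$ and $d\langle W\rangle_t = 4\,dt$, we obtain
\begin{equation*}
df_t(z) = \frac{2}{f_t(z)}\,dt - dW_t = \frac{2}{f_t(z)}\,dt - 2\,dB_t.
\end{equation*}

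Next I would apply It\^o's formula to $\log f_t(z)$, treating $\log$ as a holomorphic function of the complex process $f_t$. The first-order term contributes $f_t^{-1}\,df_t$, and the second-order term contributes $-\tfrac12 f_t^{-2}\,d\langle f\rangle_t$. Because the only source of randomness is the real Brownian motion $B_t$, the quadratic variation is $d\langle f\rangle_t = 4\,dt$ (from the $-2\,dB_t$ term). Substituting $df_t$ gives
\begin{equation*}
d\log f_t(z) = \frac{1}{f_t}\Bigl(\frac{2}{f_t}\,dt - 2\,dB_t\Bigr) - \frac{1}{2}\,\frac{1}{f_t^2}\cdot 4\,dt = -\frac{2}{f_t}\,dB_t,
\end{equation*}
where the two $dt$-terms cancel exactly. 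This cancellation is the crux of the computation: it is precisely why the It\^o drift vanishes and $h_t(z)$ turns out to be a martingale, and it is the reason $\kappa=4$ (equivalently $W_t=2B_t$) is the distinguished value. Taking imaginary parts, $d\arg f_t(z) = \Im(d\log f_t(z)) = -2\,\Im(f_t^{-1})\,dB_t$, and hence
\begin{equation*}
dh_t(z) = -\frac{2\lambda}{\pi}\,d\arg f_t(z) = \frac{4\lambda}{\pi}\,\Im\bigl(f_t(z)^{-1}\bigr)\,dB_t,
\end{equation*}
which is exactly \eqref{onepointheightevolution}.

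Finally, to conclude that $h_t(z)$ is a genuine martingale (and not merely a local martingale), I would verify an integrability condition on the coefficient $\Im(f_t(z)^{-1})$. Since $h_t(z)$ is manifestly bounded (indeed $0 \le \arg f_t(z) \le \pi$ forces $h_t(z) \in [-\lambda,\lambda]$), boundedness of the process together with the fact that it has the above stochastic-differential representation immediately gives that it is a bounded local martingale, hence a true martingale.

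\medskip

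The step I expect to require the most care is justifying the application of It\^o's formula to $\log f_t(z)$ for a \emph{fixed} $z \in \H$. One must check that $f_t(z)$ stays bounded away from $0$ for all $t$ before the swallowing time $\tau(z)$ (so that $\log$ and $f_t^{-1}$ are smooth along the trajectory), and more subtly, one should confirm that the formula is being applied up to (but not beyond) $\tau(z)$. For interior $z \in \H$, the point is never swallowed for \SLEkk4/ (the trace is a simple curve, so $z$ remains in the unbounded domain complement, and $\Im f_t(z) > 0$ throughout), which keeps $f_t(z)$ away from the real axis and in particular away from the origin; this is what makes the holomorphic It\^o calculus legitimate for all $t \ge 0$. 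The drift cancellation itself is a routine computation once $df_t$ and $d\langle f\rangle_t$ are correctly identified, so the genuine work lies in these analytic justifications rather than in the algebra.
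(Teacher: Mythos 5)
Your proposal is correct and follows essentially the same route as the paper: both compute $d f_t$ from the Loewner equation, apply It\^o's formula to $\log f_t$, observe that the drift cancels precisely at $\kappa=4$ (the paper writes the drift as $\frac{(4-\kappa)}{2f_t^2}\,dt$ for general $\kappa$ before specializing), and conclude that the bounded local martingale $h_t(z)$ is a true martingale. Your added remarks on the non-swallowing of interior points for \SLEkk4/ correspond to the paper's observation, stated just after the lemma, that $z\notin\bigcup_{t>0}K_t$ a.s.\ for $\kappa=4$.
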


The function $\frac{4 \lambda}{\pi}\Im (f_t(z)^{-1})$ is
significant. At time $t=0$, it is a negative harmonic function
whose level sets are circles in $\mathbb H$ that are tangent to
$\mathbb R$ at $W_0=0$. Intuitively, it represents the harmonic
measure (times a negative constant) of the tip of $K_t$ as
seen from the point $z$.  When $W_t$ moves an infinitesimal amount
to the left or right, $h_t$ changes by an infinitesimal
multiple of this function.

\proof
If $dW_t =
\sqrt{\kappa}\, dB_t$, then the \Ito/ derivatives of $f_t$ and $\log
f_t$ are as follows:
\begin{align*}
d f_t &= \frac{2}{f_t}\,dt - dW_t\,,
\\
d \log f_t &= 2\,f_t^{-2}\,dt - f_t^{-1}\,dW_t -
\frac{\kappa}{2}\,f_t^{-2}\,dt = \frac{(4-k)}{2f_t^2}\,dt -
f_t^{-1}\,dW_t\,.
\end{align*}
It is a special feature of $\kappa = 4$ that $d\log f_t =
-2f_t^{-1}\,dB_t$; hence $h_t(z)$
is a local martingale and
(since it is bounded) a martingale.
\qed

In what follows we use bracket notation $\langle X_t, Y_t\rangle := \langle X,
Y \rangle_t$ to denote the cross-variation product of
time-varying processes $X$ and $Y$ up to time $t$, i.e., $$\langle
X,Y \rangle_t := \lim \sum_{i=1}^k (X_{s_i} -
X_{s_{i-1}})(Y_{s_i} - Y_{s_{i-1}}),$$ where the limit is taken over
increasingly dense finite sequences $s_0=0< s_1 < \ldots < s_k =
t$.
Sometimes, we find it convenient to write $\langle X_t,Y_t\rangle$ in place
of $\langle X,Y\rangle_t$.
 In particular, we have almost surely $\langle B_t, B_t \rangle = t$ and
$\langle B_t, t \rangle = \langle t, B_t \rangle = \langle t,t
\rangle=0$ for all $t\geq 0$.

\begin{lemma}\label{l.dG} Suppose that $W_t$ is the driving parameter of an \SLEkk 4/
and $$h_t:= \lambda \left( 1- 2\pi^{-1}\arg (f_t)\right)$$ (as in Lemma \ref{l.onepointheightevolution}).
Let $G(x,y) = (2 \pi)^{-1}\log \left| \frac{x-\overline y}{x-y}
\right|$ be Green's function in the upper half plane and write
$G_t(x,y) = G(f_t(x), f_t(y))$ when $x$ and $y$ are both in
$\H \setminus K_t$.  If $x, y \in \H$ are fixed and $\lambda = \sqrt{\pi/8}$, then
the following holds for all $t$ almost surely:
\begin{eqnarray} \label{greensderivative} dG_t(x,y) &= & - d\langle h_t(x), h_t(y) \rangle. \end{eqnarray}
\end{lemma}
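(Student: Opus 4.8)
The plan is to compute both sides of \eqref{greensderivative} as It\^o differentials and match them, using the explicit form of $dh_t$ from Lemma \ref{l.onepointheightevolution} together with the special value $\lambda=\sqrt{\pi/8}$.

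First I would handle the right-hand side. Since $dh_t(z)=\frac{4\lambda}{\pi}\Im(f_t(z)^{-1})\,dB_t$ by Lemma \ref{l.onepointheightevolution}, and $\langle B,B\rangle_t=t$, the cross-variation is
\[
d\langle h_t(x),h_t(y)\rangle=\Bigl(\frac{4\lambda}{\pi}\Bigr)^2\Im(f_t(x)^{-1})\,\Im(f_t(y)^{-1})\,dt .
\]
With $\lambda^2=\pi/8$ the prefactor becomes $16\lambda^2/\pi^2=2/\pi$, so $d\langle h_t(x),h_t(y)\rangle=\frac{2}{\pi}\Im(f_t(x)^{-1})\Im(f_t(y)^{-1})\,dt$. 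This is the one place the special value of $\lambda$ is used.

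Next I would compute $dG_t(x,y)$. Writing $G(u,v)=(2\pi)^{-1}[\log|u-\overline v|-\log|u-v|]$ with $u=f_t(x)$ and $v=f_t(y)$, the crucial observation is that from $df_t(z)=(2/f_t(z))\,dt-2\,dB_t$, together with the conjugate relation $d\overline{f_t(y)}=(2/\overline{f_t(y)})\,dt-2\,dB_t$ (valid because $W_t=2B_t$ is real and $g_t$ preserves the real line), the Brownian parts of $u-v$ and of $u-\overline v$ \emph{cancel}. Hence both differences are continuous finite-variation processes with zero quadratic variation, and It\^o's formula reduces to ordinary calculus: $d\log(u-v)=d(u-v)/(u-v)=-2\,(uv)^{-1}\,dt$ and $d\log(u-\overline v)=-2\,(u\overline v)^{-1}\,dt$. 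Taking real parts gives
\[
dG_t(x,y)=\frac{1}{\pi}\,\Re\Bigl[\frac{1}{uv}-\frac{1}{u\overline v}\Bigr]\,dt .
\]
I would then simplify the bracket using $\Im(1/w)=-\Im(w)/|w|^2$, turning $\Re[(uv)^{-1}-(u\overline v)^{-1}]$ into $-2\,\Im(u)\Im(v)/(|u|^2|v|^2)=-2\,\Im(u^{-1})\Im(v^{-1})$, so that $dG_t(x,y)=-\frac{2}{\pi}\Im(f_t(x)^{-1})\Im(f_t(y)^{-1})\,dt$, exactly the negative of the right-hand side above.

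The essential point, rather than a genuine obstacle, is this cancellation of the Brownian parts in $u-v$ and $u-\overline v$: it is what forces $G_t$ to have no martingale component (consistent with the right side being a pure drift) and what eliminates all second-order It\^o corrections from the log-differentials. For completeness I would also remark that for fixed $x,y\in\H$ everything is defined for all $t$ almost surely, since an \SLEkk4/ trace is a.s.\ a simple curve avoiding fixed interior points, so $f_t(x),f_t(y)$ remain in $\H$ and the denominators stay bounded away from $0$.
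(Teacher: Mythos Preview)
Your proof is correct and follows essentially the same route as the paper: compute $dG_t(x,y)$ explicitly from the Loewner equation, compute $d\langle h_t(x),h_t(y)\rangle$ from Lemma~\ref{l.onepointheightevolution}, and match using $\lambda=\sqrt{\pi/8}$. The only cosmetic difference is that the paper works with $g_t(x)-g_t(y)$ and $g_t(x)-\overline{g_t(y)}$ directly (which have no Brownian part because the Loewner ODE for $g_t$ is driven by a deterministic drift), whereas you work with $f_t(x)-f_t(y)$ and $f_t(x)-\overline{f_t(y)}$ and observe that the $-2\,dB_t$ terms cancel in the difference; these are the same observation since $g_t-W_t=f_t$.
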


Recall that for $\kappa=4$ we have $x\notin\bigcup_{t>0}K_t$ a.s.\ for every
$x\in\H$ \cite{\RSsle}. Therefore, $G_t(x,y)$ is a.s.\ well defined for all $t$.

\proof
We first claim that
\begin{equation} \label{e.greenderivative} 2\,\pi \,d G_t(x,y) = -4\, \Im\bigl( f_t(x)^{-1}\bigr)\,
\Im\bigl( f_t(y)^{-1}\bigr)\,dt.
\end{equation}
We derive \eref{e.greenderivative} explicitly using \Ito/ calculus as follows (recalling that $g_t(x) - g_t(y) = f_t(x) = f_t(y)$):
\begin{eqnarray*} 2\,\pi \,d G_t(x,y) &=& - d\, \Re \log [g_t(x) - g_t(y)] + d\, \Re \log [g_t(x) -
\overline{g_t(y)}] \\
&=& - 2\,\Re \frac {f_t(x)^{-1} - f_t(y)^{-1}}{f_t(x) - f_t(y)}\,dt
+ {}
\\ & &\qquad 2
\,\Re \frac { f_t(x)^{-1} - \overline{f_t(y)}^{-1}}{f_t(x) - \overline{f_t(y)}}\,dt \\
&=& 2\, \Re\bigl( f_t(x)^{-1}f_t(y)^{-1}\bigr)\,dt  - 2\, \Re\bigl( f_t(x)^{-1} \bigl(\overline{f_t(y)}\bigr)^{-1}\bigr)\,dt \\ &=&  4\, \Re \bigl( i\, f_t(x)^{-1}\, \Im[ f_t(y)^{-1}]\bigr) \,dt \\
&= & -4\, \Im\bigl( f_t(x)^{-1}\bigr)\, \Im \bigl(f_t(y)^{-1}\bigr)\,dt\,.  \end{eqnarray*}

Using~\eqref{onepointheightevolution},
we have $d\langle h_t(x),h_t(y) \rangle =
\left(\frac{4 \lambda}{\pi}\right)^2 \Im\bigl( f_t(x)^{-1}\bigr)\, \Im
\bigl(f_t(y)^{-1}\bigr)\,dt$.  Setting $\lambda = \sqrt{\pi/8}$, we obtain~\eref{greensderivative}.
\qed

Next fix some $\density \in H_s(\H)$ and write $E_t(\density) := \int G_t(x,y)\, \density(x)\, \density(y)\,
dx\, dy$ for the electrostatic potential energy of $\density$ in
$\H \setminus K_t$.  For each $t$, the function $h_t$ is not
well defined on all of $\H$ (since it is not defined on $K_t$), but it is defined
(and harmonic and bounded between $-\lambda$ and $\lambda$) almost everywhere almost surely.  In
particular, when $\density \in H_s(\H)$, the integral $(h_t, \density) = \int_{\H} h_t(x)\, \density(x)\, dx$
is well defined, so we may view $h_t$ as a distribution.

\begin{lemma} \label{multipointheight}
In the setting of Lemma~\ref{l.dG}, assume $\lambda=\sqrt{\pi/8}$.
If $\density \in H_s(\H)$, then $(h_t, \density)$ is a martingale.  Moreover,
\begin{equation} \label{e.multipoint} d\langle (h_t, \density), (h_t,
\density) \rangle =- d \left( \int \density(x)\, \density(y) \,G_t(x,y)\,dx\,dy
\right)= - d E_t(\density).\end{equation} In other words, $(h_t, \density)$ is a
Brownian motion
when parameterized by minus
the electrostatic potential energy of $\density$ in $\H \setminus K_t$.  More
generally, when $\density_1, \density_2 \in H_s(D)$, we have
\begin{equation} \label{e.generalmultipoint} d\langle (h_t, \density_1), (h_t,
\density_2) \rangle =- d \left( \int \density_1(x)\, \density_2(y)\, G_t(x,y)\,dx\,dy
\right).\end{equation}
\end{lemma}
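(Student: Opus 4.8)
The plan is to build everything on the pointwise It\^o differential
$$dh_t(z)=\frac{4\lambda}{\pi}\,\Im\bigl(f_t(z)^{-1}\bigr)\,dB_t$$
supplied by \eqref{onepointheightevolution}, and to pass it through the pairing $(h_t,\density)=\int_\H h_t(x)\,\density(x)\,dx$ by a stochastic Fubini argument. Concretely, I would first establish the representation
$$d(h_t,\density)=A_t(\density)\,dB_t,\qquad A_t(\density):=\frac{4\lambda}{\pi}\int_\H \Im\bigl(f_t(x)^{-1}\bigr)\,\density(x)\,dx,$$
obtained by interchanging the spatial integral $\int_\H\,\density(x)\,dx$ with the stochastic differential in $t$. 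Granting this, $(h_t,\density)$ is an It\^o integral against $B_t$ with no drift term, hence a local martingale; and because $|h_t|\le\lambda$ almost everywhere while $\density$ has compact support, the process $(h_t,\density)$ is bounded and therefore a genuine martingale. (The martingale property can also be seen directly: by Fubini for conditional expectations, again licensed by boundedness, $\E[(h_t,\density)\mid\mathcal F_s]=\int_\H \E[h_t(x)\mid\mathcal F_s]\,\density(x)\,dx=(h_s,\density)$, using that each $h_t(x)$ is a martingale.)

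From the representation the quadratic variation is immediate:
$$d\langle (h_t,\density),(h_t,\density)\rangle=A_t(\density)^2\,dt=\Bigl(\frac{4\lambda}{\pi}\Bigr)^2\!\!\int_{\H\times\H}\!\!\Im\bigl(f_t(x)^{-1}\bigr)\,\Im\bigl(f_t(y)^{-1}\bigr)\,\density(x)\,\density(y)\,dx\,dy\,dt.$$
The significance of $\lambda=\sqrt{\pi/8}$ is exactly that $\bigl(4\lambda/\pi\bigr)^2=2/\pi$, so that upon invoking \eqref{e.greenderivative} in the rearranged form $\Im(f_t(x)^{-1})\,\Im(f_t(y)^{-1})\,dt=-\tfrac{\pi}{2}\,dG_t(x,y)$ the inner expression collapses to $-\density(x)\,\density(y)\,dG_t(x,y)$. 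Integrating over $x,y$ gives $-dE_t(\density)$, which is \eqref{e.multipoint}. The bilinear statement \eqref{e.generalmultipoint} follows at once: since $(h_t,\density_1)$ and $(h_t,\density_2)$ are It\^o integrals against the common $B_t$ with integrands $A_t(\density_1)$ and $A_t(\density_2)$, their cross-variation differential is $A_t(\density_1)A_t(\density_2)\,dt$, and the same substitution produces $-d\!\int \density_1(x)\,\density_2(y)\,G_t(x,y)\,dx\,dy$.

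The one genuinely delicate point, and the step I expect to be the main obstacle, is the justification of the stochastic Fubini interchange. I would verify its hypotheses on each finite horizon $[0,T]$, reducing to an $L^2$-type bound of the form $\E\int_0^T\!\int \Im(f_t(x)^{-1})^2\,\density(x)^2\,dx\,dt<\infty$ (or the corresponding bound on $\int(\E\int_0^T\!|\cdots|^2\,dt)^{1/2}|\density|\,dx$). The integrand $\Im(f_t(x)^{-1})=-\Im(g_t(x))/|f_t(x)|^2$ is bounded by $|f_t(x)|^{-1}$, so the only threat to integrability is when the \SLEkk4/ trace approaches the compact support of $\density$ and $|f_t(x)|$ becomes small. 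Here one uses that for $\kappa=4$ no fixed $x\in\H$ is ever swallowed, so $f_t(x)\ne0$ for all finite $t$, together with the square-root behavior of $g_t$ near its tip, which makes $|f_t(x)|^{-1}$ of order $(\text{dist}(x,\gamma(t)))^{-1/2}$ --- square-integrable in the plane. Combined with standard distortion estimates for the maps $g_t$, uniform over $t\in[0,T]$, this yields the required finiteness and licenses the interchange; the rest is the routine computation above.
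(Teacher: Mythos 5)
Your proof is correct in outline, but it takes a genuinely different route from the one the paper ultimately adopts --- in fact it is essentially the route the authors describe as their \emph{original} argument, which they explicitly replaced: ``Our original argument invoked a stochastic Fubini theorem (we used the one in [Protter, \S IV.4]) and was longer and less self contained.'' The paper's final proof, attributed to Jason Miller, avoids the stochastic Fubini interchange entirely. It first reduces to \eqref{e.generalmultipoint} by bilinearity (splitting each $\density$ into a difference of non-negative test functions), then uses the fact that for continuous martingales the bracket $\langle (h_t,\density_1),(h_t,\density_2)\rangle$ is characterized as the unique process making $(h_t,\density_1)(h_t,\density_2)$ minus that process a martingale. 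Plugging in the candidate, one only needs that $h_t(x)h_t(y)+G_t(x,y)$ is a martingale for each fixed pair $(x,y)$ --- which is exactly Lemma \ref{l.dG} --- and that a weighted average of uniformly bounded martingales is a martingale, which is ordinary (deterministic) Fubini. What Miller's argument buys is self-containedness: no stochastic integral representation of $(h_t,\density)$ is ever needed. What your argument buys is the explicit representation $d(h_t,\density)=A_t(\density)\,dB_t$, which makes both the martingale property and the cross-variation formula transparent in one stroke. One remark on your ``delicate point'': the $L^2$ hypothesis for stochastic Fubini is easier to verify than your sketch via distance-to-the-trace estimates suggests. Since $h_t(x)$ is a martingale bounded by $\lambda$ in absolute value, $\E\int_0^T \bigl(\tfrac{4\lambda}{\pi}\bigr)^2\Im\bigl(f_t(x)^{-1}\bigr)^2\,dt=\E\bigl[\langle h_\cdot(x)\rangle_T\bigr]\le \lambda^2$ uniformly in $x$, so the required bound $\int\bigl(\E\int_0^T|\cdots|^2\,dt\bigr)^{1/2}|\density(x)|\,dx<\infty$ follows immediately from compact support of $\density$, with no geometric input about the \SLEkk4/ trace at all.
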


\proof  The right equality in \eref{e.multipoint} is true by definition, so it is enough to
prove \eref{e.generalmultipoint}.  Since both sides of \eref{e.generalmultipoint} are bilinear
in $\density_1$ and $\density_2$, we lose no generality in assuming that $\density_1$ and $\density_2$
are non-negative.  (Note that any $\density \in H_s(D)$ can be written $\density_1 - \density_2$ for some
non-negative $\density_1, \density_2 \in H_s(D)$; simply choose $\density_1$ to be any non-negative element of
$H_s(D)$ which dominates $\density$ and set $\density_2 = \density_1 - \density$.)

Jason Miller has pointed out in private communication a very simple way to obtain \eqref{e.generalmultipoint}.  Since $(h_t, \density_i)$ are continuous martingales, the (non-decreasing) process on the LHS of \eqref{e.generalmultipoint} is characterized by the fact that
$$(h_t, \density_1)(h_t, \density_2)- \langle (h_t, \density_1), (h_t, \density_2) \rangle$$
is a martingale.  Plugging in the RHS, we have now only to show that

\begin{equation}\label{e.intqvmartingale}
(h_t, \density_1)(h_t, \density_2) + d \left( \int \density_1(x)\, \density_2(y)\, G_t(x,y)\,dx\,dy \right)
\end{equation}
is a martingale.  By \eqref{greensderivative} $$h_t(x)h_t(y) + G_t(x,y) \,dx\,dy$$ is a martingale for fixed $x$ and $y$ in $\H$.  These martingales are uniformly bounded above for $x$ and $y$ in the support of the $\density_i$ (since $G_t(x,y)$ is non-increasing and $h_t(\cdot)$ is bounded).
Since \eqref{e.intqvmartingale} is a weighted average of these martingales, Fubini's theorem implies that \eqref{e.intqvmartingale} is itself a martingale.

Our original argument invoked a stochastic Fubini theorem (we used the one in~\cite[\S IV.4]{\ProtterBook}) and was longer and less self contained.
\qed

Now, define $h_{\infty}(z) = \lim_{t \rightarrow \infty} h_t(z)$,
$G_{\infty}(x,y) = \lim_{t \rightarrow \infty} G_t(x,y)$, and
$E_{\infty}(\density) = \lim_{t \rightarrow \infty}
E_t(\density)$. (The limit exists almost surely for fixed $z$ since $h_t(z)$ is a bounded martingale; it can also be deduced
for all $z$ from the continuity of the SLE trace.)
The reader may check that for fixed $x$,
$h_{\infty}(x)$ is almost surely
$\pm\lambda$ depending on whether $x$ is to the left or
right of $\gamma$.

Similarly,
since $G_t(x,y)$ and $E_t(\density)$ are decreasing
functions of $t$, these limits also exist almost surely.
The statement of the following lemma makes use of these
definitions and implicitly assumes Proposition \ref{p.subdomainGFFdef}
(namely, the fact that the zero boundary GFF on an arbitrary subdomain of $\H$
has a canonical definition as a random distribution on $\H$).

\begin{lemma} \label{contourplusGFF}
Assume the setting of Lemma~\ref{l.dG} and $\lambda=\sqrt{\pi/8}$.
Let $\tilde h$ be equal to $h_{\infty}$ (as defined above)
plus a sum of independent
zero-boundary GFF's, one in each component of $\H \setminus
\gamma$.  Then the law of $\tilde h$ is that of a GFF in $\H$ with
boundary conditions $-\lambda$ and $\lambda$ on the negative and
positive real axes.  In fact, the
pair $(\tilde h,W)$ constructed in this way (where $W$ is the
Loewner driving parameter of $\gamma$) satisfies the hypothesis
of Theorem \ref{uniquecontinuumpath}.
\end{lemma}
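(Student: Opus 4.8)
The plan is to reduce the entire statement to a single computation involving the complex exponential (local) martingale built from Lemma~\ref{multipointheight}, and then to read off both the marginal law of $\tilde h$ and the conformal Markov property by sampling this martingale at times $0$, $T$ and $\infty$. Fix $\density\in H_s(\H)$ and $\theta\in\R$, and set $N_t:=\exp\bigl(i\theta(h_t,\density)-\tfrac{\theta^2}{2}E_t(\density)\bigr)$. By Lemma~\ref{multipointheight}, $(h_t,\density)$ is a continuous martingale with $d\langle(h_t,\density)\rangle=-\,dE_t(\density)$, so that $\langle(h_t,\density)\rangle_t=E_0(\density)-E_t(\density)$ and, by It\^o's formula, $N_t$ is a local martingale (the drift cancels exactly because $d\langle(h_t,\density)\rangle=-\,dE_t(\density)$). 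Moreover $E_t(\density)=\int_{(\H\setminus K_t)^2}\density(x)\density(y)\,G_{\H\setminus K_t}(x,y)\,dx\,dy$ is the variance of a zero-boundary GFF on $\H\setminus K_t$ tested against $\density$, hence $E_t(\density)\ge0$; therefore $|N_t|\le1$ and $N_t$ is a bounded, uniformly integrable martingale.

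First I would identify the conditional law of $\tilde h$ given the full curve $\gamma$. Let $D_+$ and $D_-$ be the two components of $\H\setminus\gamma$. Given $\gamma$ the summand $h_\infty$ is deterministic (equal to $\pm\lambda$ on $D_\pm$), while the two zero-boundary GFFs $h^{D_+}$ and $h^{D_-}$ are independent and are interpreted as distributions on $\H$ via Proposition~\ref{p.subdomainGFFdef}. Thus, conditionally on $\gamma$, the variable $(\tilde h,\density)$ is Gaussian with mean $(h_\infty,\density)$ and variance $\int_{D_+^2}\density\,\density\,G_{D_+}+\int_{D_-^2}\density\,\density\,G_{D_-}$. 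The key identity is that this variance equals $E_\infty(\density)=\lim_t E_t(\density)$: since $G_t(x,y)=G_{\H\setminus K_t}(x,y)$ and $K_t\uparrow\gamma$, the decreasing limit $G_\infty(x,y)$ is $G_{D_\pm}(x,y)$ when $x,y$ lie in a common component and $0$ when they lie in opposite components. Consequently $\E\bigl[e^{i\theta(\tilde h,\density)}\mid\gamma\bigr]=\exp\bigl(i\theta(h_\infty,\density)-\tfrac{\theta^2}{2}E_\infty(\density)\bigr)=N_\infty$.

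Combining these, the tower property gives $\E\bigl[e^{i\theta(\tilde h,\density)}\mid W|_{[0,T]}\bigr]=\E[N_\infty\mid W|_{[0,T]}]=N_T$, using that $N$ is a uniformly integrable martingale for the Loewner filtration and that $\gamma$ determines $W$. Taking $T=0$ yields $\E[e^{i\theta(\tilde h,\density)}]=N_0=\exp\bigl(i\theta(h_0,\density)-\tfrac{\theta^2}{2}E_0(\density)\bigr)$; since $h_0=\underline h_\partial$ is the harmonic extension of the $\pm\lambda$ boundary data and $E_0(\density)=(\density,-\Delta^{-1}\density)$ is the Green's energy of $\density$ in $\H$, this is exactly the characteristic function of the GFF with boundary conditions $\pm\lambda$ paired with $\density$. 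Letting $\density$ and $\theta$ vary, and using linearity in $\density$ together with the Cram\'er--Wold device to recover all finite-dimensional marginals, Proposition~\ref{p.GFFdef} and the definition of the boundary-value GFF identify the law of $\tilde h$, which is the first assertion. For general $T$ and $\density$ supported in $\H\setminus K_T$, the identity $\E[e^{i\theta(\tilde h,\density)}\mid W|_{[0,T]}]=N_T=\exp\bigl(i\theta(h_T,\density)-\tfrac{\theta^2}{2}E_T(\density)\bigr)$ says that, given $W|_{[0,T]}$, the restriction of $\tilde h$ to $H_s(\H\setminus K_T)$ is Gaussian with mean $(h_T,\density)$ and variance $E_T(\density)$; since $h_T=\tilde h_T$ is the harmonic extension of the boundary data at the tip and $E_T(\density)$ is the zero-boundary Green's energy in $\H\setminus K_T$, this is precisely the conformal Markov property demanded in Theorem~\ref{uniquecontinuumpath}.

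The step I expect to be the main obstacle is the potential-theoretic identification of $G_\infty$, namely that $G_\infty(x,y)=G_{D_\pm}(x,y)$ within a component and $0$ across components --- this is what matches the purely analytic decreasing limit $E_\infty(\density)$ with the geometric variance produced by two independent GFFs on the two sides of $\gamma$. It relies on $\gamma$ being a.s.\ a simple curve that separates $\H$ into exactly two domains (valid for $\kappa=4$) and on the convergence of Green's functions as $K_t\uparrow\gamma$; in particular one must verify that $G_{\H\setminus K_t}(x,y)\to0$ for $x,y$ on opposite sides of $\gamma$, even though $\H\setminus K_t$ is connected at each finite $t$. A secondary technical point is the passage from the correct one-dimensional Gaussian law of each $(\tilde h,\density)$ to the full law of the random distribution $\tilde h$, which is handled by applying the Cram\'er--Wold argument to linear combinations $\sum_i c_i\density_i$.
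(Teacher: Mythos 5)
Your proof is correct and follows essentially the same route as the paper: Lemma~\ref{multipointheight} makes $(h_t,\density)$ a Brownian motion in the time $E_0(\density)-E_t(\density)$, the conditionally independent zero-boundary GFFs contribute an independent Gaussian of variance $E_\infty(\density)$ given $\gamma$, and the total is Gaussian of variance $E_0(\density)$, identified via Proposition~\ref{p.GFFdef}; the Markov property follows by running the same argument from time $T$ instead of $0$. Your exponential-martingale $N_t$ is just an explicit characteristic-function rendering of the paper's ``Brownian motion stopped at time $E_0-E_\infty$ plus an independent Gaussian of variance $E_\infty$'' step, and the identification of $G_\infty$ with the block-diagonal Green's function of the two components, which you rightly flag as the key potential-theoretic input, is used (implicitly) by the paper as well.
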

\proof
For each $\density\in H_s(D)$ the random variable $(\tilde h, \density)$
is a sum of $(\underline h_\p,\density)$, a Brownian motion started at
time zero and stopped at time $E_0(\density) -
E_{\infty}(\density)$, and a Gaussian of variance
$E_{\infty}(\density)$.  Thus its law is Gaussian with mean $(\underline h_\p, \density)$
and variance $E_0(\density)$.  The fact that
random variables $(\tilde h, \density)$ have these laws for any $\density \in H_s(\H)$ implies that $\tilde h$
is a GFF with the given boundary conditions by Proposition \ref{p.GFFdef}.  A similar
argument applies if we replace $h_0$ with $h_t$ for any stopping time $t$ of
$W_t$,
and this implies that $(\tilde h, \gamma)$ satisfies the hypothesis of Theorem \ref{uniquecontinuumpath}.
\QED

We can now prove part of Theorem \ref{uniquecontinuumpath}.

\begin{lemma} \label{lawofpathlemma}
Suppose that $(\tilde h, W)$ is a random variable whose law
is a coupling of the GFF on $\H$ (with
$\pm \lambda$ boundary conditions as above) and a real-valued
process $W = W_t$ defined for $t \geq 0$ that satisfies the
hypothesis of Theorem \ref{uniquecontinuumpath}.  Then the marginal
law of $W$ is that of $\sqrt 4$ times a Brownian motion (so that
the Loewner evolution generated by $W$ is \SLEkk 4/) and $\lambda =
\sqrt{\pi/8}$.
\end{lemma}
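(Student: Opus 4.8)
The plan is to read off the law of $W$ from two martingale facts that the conformal Markov hypothesis supplies. Fix $\density\in H_s(\H)$ supported at positive distance from $0$, let $\mathcal F_t=\sigma(W_s:s\le t)$, and run $t$ up to the first time $\tilde K_t$ meets $\mathrm{supp}\,\density$. On that range the hypothesis identifies the conditional law of $\tilde h$ on $H_s(\H\setminus\tilde K_t)$, so that
$$M_t(\density):=(\tilde h_t,\density)=\int h_t(z)\,\density(z)\,dz=\E\bigl[(\tilde h,\density)\mid\mathcal F_t\bigr],\qquad \E\bigl[(\tilde h,\density)^2\mid\mathcal F_t\bigr]=M_t(\density)^2+E_t(\density),$$
where $E_t(\density)=\int G_t(x,y)\density(x)\density(y)\,dx\,dy$ is the energy in $\H\setminus\tilde K_t$. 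Since $W$ is continuous, $h_t(z)$ and hence $M_t(\density)$ are continuous in $t$ and bounded, so both $M_t(\density)$ and $M_t(\density)^2+E_t(\density)$ are continuous martingales (conditional expectations of fixed $L^2$ variables). As $M_t(\density)^2-\langle M(\density)\rangle_t$ is also a martingale, the difference $\langle M(\density)\rangle_t+E_t(\density)$ is a continuous finite-variation martingale, hence constant, giving the key identity $d\langle M(\density)\rangle_t=-dE_t(\density)$. Letting $\density$ approach a point mass at $z\in\H$ shows likewise that each $h_t(z)$ is a bounded continuous martingale up to the time $z$ is swallowed.

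The main obstacle is that the hypothesis only guarantees $W$ continuous, whereas identifying its law calls for It\^o calculus, i.e.\ requires $W$ to be a semimartingale. I would establish this first. For fixed $z_0\in\H$, write $f_t(z_0)=g_t(z_0)-W_t$; since $\Im g_t(z_0)>0$ and $W_t\in\R$, the martingale $\theta_t:=\arg f_t(z_0)\in(0,\pi)$ satisfies
$$W_t=\Re g_t(z_0)-\Im g_t(z_0)\,\cot\theta_t.$$
Here $\Re g_t(z_0)$ and $\Im g_t(z_0)$ are $C^1$ in $t$ (they solve the Loewner ODE), while $\cot\theta_t$ is a smooth function of the continuous martingale $\theta_t$, hence a semimartingale by It\^o's formula (localizing so that $\theta_t$ stays in a compact subinterval of $(0,\pi)$). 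Thus $W$ is a continuous semimartingale on $[0,\tau_{z_0})$; since $\tilde K_T$ is compact for each $T$, one may pick $z_0\in\H\setminus\tilde K_T$, and therefore $W$ is a semimartingale on all of $[0,\infty)$.

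With $W$ a semimartingale, write $dW_t=dA_t+dN_t$ (finite-variation plus local-martingale parts). Applying It\^o's formula to $h_t(z)=\lambda\bigl(1-\tfrac2\pi\Im\log f_t(z)\bigr)$, using $df_t(z)=2f_t(z)^{-1}dt-dW_t$ and $d\langle f(z)\rangle_t=d\langle W\rangle_t$, the drift is
$$-\frac{2\lambda}{\pi}\left[\Im\left(\frac{1}{f_t(z)^2}\right)\left(2\,dt-\tfrac12\,d\langle W\rangle_t\right)-\Im\left(\frac{1}{f_t(z)}\right)dA_t\right],$$
and the martingale part is $\tfrac{2\lambda}{\pi}\Im(f_t(z)^{-1})\,dN_t$. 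Because each $h_t(z)$ is a martingale, the drift must vanish identically in $z$; as $z$ ranges over $\H\setminus\tilde K_t$ the point $f_t(z)$ ranges over all of $\H$, and $\Im(w^{-2})$, $\Im(w^{-1})$ are linearly independent functions of $w\in\H$, forcing $2\,dt-\tfrac12 d\langle W\rangle_t=0$ and $dA_t=0$. Hence $W$ is a continuous local martingale with $d\langle W\rangle_t=4\,dt$, so by L\'evy's characterization $W_t=2B_t$ is $\sqrt4$ times a Brownian motion and the Loewner trace is $\mathrm{SLE}(4)$.

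Finally, to pin down $\lambda$, observe that now $dh_t(z)=\tfrac{4\lambda}{\pi}\Im(f_t(z)^{-1})\,dB_t$ (as in Lemma~\ref{l.onepointheightevolution}, which holds for arbitrary $\lambda$), so
$$d\langle M(\density)\rangle_t=\frac{16\lambda^2}{\pi^2}\left(\int\Im\left(f_t(z)^{-1}\right)\density(z)\,dz\right)^2 dt.$$
On the other hand the It\^o computation \eqref{e.greenderivative}, which does not involve $\lambda$ or $\kappa$ (the $dW_t$ terms cancel in $g_t(x)-g_t(y)$), gives
$$dE_t(\density)=-\frac{2}{\pi}\left(\int\Im\left(f_t(z)^{-1}\right)\density(z)\,dz\right)^2 dt.$$
Comparing these with the identity $d\langle M(\density)\rangle_t=-dE_t(\density)$ from the first paragraph, and choosing $\density\ge0$ nonzero so that the common factor $\bigl(\int\Im(f_t^{-1})\density\bigr)^2$ is strictly positive (note $\Im(f_t(z)^{-1})<0$ on $\H$), I obtain $16\lambda^2/\pi^2=2/\pi$, that is $\lambda=\sqrt{\pi/8}$. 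I expect the semimartingale step of the second paragraph to be the delicate part, since everything downstream is routine It\^o calculus once It\^o's formula is licensed.
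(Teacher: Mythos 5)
Your argument is correct in substance, but it follows a genuinely different route from the paper's. The paper never establishes that $W$ is a semimartingale: instead it upgrades the martingale property of $(\tilde h_t,\density)$ to the full statement that it is a Brownian motion in the parameterization $-E_t(\density)$ (via a Gaussian-increment argument rather than your second-moment computation), specializes to bump functions to get that $\tilde h_t(z)$ is a Brownian motion in the radial capacity parameterization at $z$, and then observes that in radial coordinates $\tilde h_t(z)$ is an affine function of $\widehat W_t-\widehat O_t$ while $\partial_t\widehat O_t$ is a function of $\widehat W_t-\widehat O_t$ by Loewner's equation --- so the law of $W$ is \emph{determined}, and is then identified with \SLEkk4/ at $\lambda=\sqrt{\pi/8}$ by comparison with the explicit coupling of Lemma \ref{contourplusGFF}. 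You instead prove that $W$ is a semimartingale via the identity $W_t=\Re g_t(z_0)-\Im g_t(z_0)\cot\theta_t$ and then run It\^o's formula, killing the drift by linear independence of $\Im(w^{-1})$ and $\Im(w^{-2})$ and reading off $\langle W\rangle_t=4t$ and $\lambda$ directly; this is essentially the strategy of \cite{MR2525778} and of later treatments of such couplings, and it has the merit of being self-contained (it does not need the pre-built \SLEkk4/ coupling to identify the law) while extracting less from the hypothesis (only the martingale and quadratic-variation identities, not Gaussianity of increments). Two points in your write-up need more care than you give them: (i) the globalization of the semimartingale property --- since $\tilde K_T$ is random you cannot ``pick $z_0\in\H\setminus\tilde K_T$'' deterministically, so you must work with a countable family of base points (say $z_0=in$), localize each $\theta^{(n)}$ away from $\{0,\pi\}$, and patch the resulting stopped semimartingales via stopping times that a.s.\ exhaust $[0,\infty)$; and (ii) the interchange of the stochastic integral with $\int\cdot\,\density(z)\,dz$ in your final paragraph requires a stochastic Fubini argument (or Miller's martingale argument used in the proof of Lemma \ref{multipointheight}), which you assert without justification. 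Both are repairable, and you rightly identify (i) as the delicate step.
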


\proof Let $\density \in H_s(\H)$ be non-negative but not identically zero.  We first claim
that the hypothesis of Lemma \ref{lawofpathlemma} implies the conclusion of Lemma \ref{multipointheight},
namely that $(\tilde h_t, \density)$ is a Brownian motion when parameterized by $-E_t(\density)$.

Write $u = u(t) = -E_t(\density)$.  It is easy to see that $u(t)$ is continuous and strictly
increasing in $t$, at least up to the first time that $K_t$ intersects the
support of $\density$.
Write $F(\cdot)$ for the inverse of $u(\cdot)$.

Fix some constant $T>0$.  We define a process $B$ by writing
$B(u) = (\tilde h_{F(u)}, \density)$ whenever $F(u) < T$.
After time $u_0 = \sup \{u : F(u) < T \}$, we let $B$ evolve (independently of $W$)
according to the law of a standard Brownian motion until time $E_0(\density)$ (so that given
$B$ up until time $u_0$, the conditional
law of $B(u) - B(u_0)$---for each $u \geq u_0$---is a centered  Gaussian of variance $u - u_0$).  Because,
given $W$, the conditional law of $(h,\density)$ is a Gaussian with variance $E_0(\density) - u_0$,
we may couple this process $B$ with $\tilde h$ in such a way $B(E_0(\density)) = (h, \density)$ almost surely.

Now, we claim that $B$ is a standard Brownian motion on the interval $[0, E_0(\density)]$.
To see this, note that for each fixed $U>0$, the conditional law of $B(E_0(\density)) - B(U)$ (given
$B(u)$ for $u \leq U$) is that of a Gaussian of variance $E_0(\density) - U$ (independently of $B(u)$
for $u \leq U$).  It is a general fact (easily seen by taking characteristic functions)
that if $X$ and $Y$ are independent random variables, and $X$ and $X+Y$ are Gaussian, then so is $Y$.
Thus, $B(U)$ is a Gaussian of variance $U$ that is independent of $B(V)- B(U)$ for each $V>U$,
and $B(V) - B(U)$ is a Gaussian of variance $V-U$.
Since $B$ is clearly almost surely continuous, this implies that $B$ is a Brownian motion on
$[0, E_0(\density)]$.

Now, for each fixed $z \in \H$, if we take $\density$ to be
a positive, symmetric bump function centered at $z$ (with total integral one),
then the harmonicity of $\tilde h_t$ implies that $(\tilde h_t, \density) = \tilde h_t(z)$
provided that the support of $\density$ does not intersect $K_t$.  This implies
that $\tilde h_t(z)$ is a continuous martingale up until the first time that $K_t$ intersects
the support of $\density$.  Since we may take the support of $\density$ to be arbitrarily small
(and since $\tilde h_t$ is bounded between $\pm \lambda$), this implies that $\tilde h_t(z)$
is a continuous martingale
and is in fact a Brownian motion when parameterized by $E_0(\density)-E_t(\density)$.
The latter quantity satisfies $$E_0(\density) - E_t(\density) = (\density, - \Delta^{-1} \density + \Delta^{-1}_t
\density),$$
where $\Delta_t$ is the Laplacian restricted to $\H \setminus K_t$.
Since $- \Delta^{-1} \density + \Delta^{-1}_t \density$ is harmonic on $\H \setminus K_t$,
we have (provided $K_t$ does not intersect the support of $\density$),
\begin{eqnarray*} E_0(\density) - E_t(\density) &=&
\left( \Delta^{-1}\density - \Delta^{-1}_t\density, \delta_z \right) \\
&=& \left( \density, \Delta^{-1} \delta_z - \Delta^{-1}_t \delta_z \right),
\end{eqnarray*}
which is the value of the harmonic function $\Delta^{-1} \delta_z - \Delta^{-1}_t \delta_z$
at the point $z$, which is easily seen to be the log of the modulus of the derivative at $z$ of a conformal
map from $\H \setminus K_t$ to $\H$ that fixes $z$.

In order to prove the lemma, by Lemma~\ref{l.dG} it is now enough to show
that the fact that $\tilde h_t(z)$ is a Brownian motion under the time parameterization described
above determines the law of $W_t$.
At this point is is convenient to change to radial coordinates.  Let $\Psi$ be a conformal map
from $\H$ to the unit disc $\D$ sending $z$ to the origin and $\widehat K_t$ the image of
$K_t$ under $\Psi$.  Let $\widehat g_t: \D \setminus \widehat K_t \to \D$ be the conformal
map normalized to fix $0$ and have positive derivative at $0$, and let $\widehat W_t$ and $\widehat O_t$
be the arguments of the images of $W_t$ and $\infty$, respectively, under the map
$\widehat g_t\circ \Psi\circ g_t^{-1}$.  Then
$\tilde h_t(z)$ is an affine function of $\widehat W_t - \widehat O_t$,
and the time parameterization described above is
the standard radial Loewner evolution parameterization.
By Loewner's equation, $\partial_t \widehat O_t$ is a function of
$\widehat W_t-\widehat O_t$.
Therefore, the process $\widehat W_t-\widehat O_t$,
together with $\widehat W_0$ and $\widehat O_0$ determines
$\widehat O_t$ and thus also $\widehat W_t$.
This then determines $W_t$
(see \cite{math.Pr/0505368} for more details about changing between radial and chordal coordinates).
\qed

\section{Local sets}\label{s.local}

\subsection{Absolute continuity}\label{GFFfacts}

We begin this section with two simple results about singularity and
absolute continuity of the GFF.

\begin{lemma} \label{singularGFF}
Suppose that $D$ is a simply connected planar domain and that
$\underline h_\p$ is a deterministic non-identically-zero harmonic
function on $D$ and that $h$ is an instance
 of the (zero boundary) GFF on $D$.  Then $h$ and $\underline h_\p + h$
 (both of which are random distributions on $D$) have mutually singular laws.
\end{lemma}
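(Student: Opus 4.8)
The plan is to recognize this as the standard dichotomy for translates of a Gaussian measure: shifting the law of $h$ by the deterministic vector $\underline h_\p$ yields a measure that is either equivalent to, or mutually singular with, the original, according to whether $\underline h_\p$ does or does not belong to the Cameron--Martin space of the GFF, which is exactly $H(D)$ (this is the Gaussian Hilbert space identified in the discussion preceding Proposition \ref{p.GFFdef}). Since the shift here is a nonzero harmonic function, I expect it to fail to lie in $H(D)$, and I will then convert this soft statement into an explicit pair of disjoint full-measure events.

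First I would verify that $\underline h_\p \notin H(D)$. If $\underline h_\p$ were an element of $H(D)$, then for every $g \in H_s(D)$ we would have $(\underline h_\p, g)_\nabla = (\underline h_\p, -\Delta g) = 0$, using integration by parts and the harmonicity of $\underline h_\p$ (the boundary terms vanish because $g$ has compact support). As $H_s(D)$ is dense in $H(D)$, this forces $(\underline h_\p, g)_\nabla = 0$ for all $g \in H(D)$, and taking $g = \underline h_\p$ gives $\|\underline h_\p\|_\nabla = 0$, i.e.\ $\underline h_\p = 0$, contrary to hypothesis. Hence $\underline h_\p \notin H(D)$.

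Next I would produce directions in which the shift is ``visible.'' Recall that $\density \mapsto (h, \density)$ identifies the Gaussian Hilbert space of the $(h,\density)$ with $H(D)$ via $\density \mapsto -\Delta^{-1}\density$, the covariance of $(h, \density_1)$ and $(h, \density_2)$ being $(-\Delta^{-1}\density_1, -\Delta^{-1}\density_2)_\nabla$. Under the law of $h$ the variable $(h,\density)$ is $N\!\bigl(0, (\density, -\Delta^{-1}\density)\bigr)$, while under the law of $\underline h_\p + h$ it is $N\!\bigl((\underline h_\p, \density), (\density, -\Delta^{-1}\density)\bigr)$; thus the relevant quantity is the linear functional $\density \mapsto (\underline h_\p, \density)$ measured against the Hilbert norm $\|-\Delta^{-1}\density\|_\nabla$. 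This functional is bounded on the Gaussian Hilbert space precisely when $\underline h_\p$ is represented by an element of $H(D)$; since it is not, the functional is unbounded. Consequently, choosing $\density_n \in H_s(D)$ so that $g_n := -\Delta^{-1}\density_n$ form an orthonormal basis of $H(D)$, the variables $(h, \density_n)$ are i.i.d.\ standard Gaussians under the law of $h$, while the mean shifts $m_n := (\underline h_\p, \density_n)$ satisfy $\sum_n m_n^2 = \infty$ (a finite sum would make the functional bounded, hence $\underline h_\p \in H(D)$).

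Finally I would conclude by Kakutani's dichotomy. The map $h \mapsto \bigl((h,\density_n)\bigr)_{n}$ pushes the law of $h$ forward to the product measure $\prod_n N(0,1)$ and the law of $\underline h_\p + h$ forward to $\prod_n N(m_n, 1)$; since the per-coordinate Hellinger affinities multiply to $\prod_n e^{-m_n^2/8} = e^{-\frac18\sum_n m_n^2} = 0$, these product measures are mutually singular, and pulling disjoint supporting sets back through the measurable map yields the mutual singularity of the two laws on $(\Omega, \mathcal F)$. The main obstacle is the middle step: one must note that $\underline h_\p$, being harmonic, pairs to zero with every interior test function under the Dirichlet form, so the shift is invisible to the pairings $(\cdot,\cdot)_\nabla$ with $H_s(D)$ and is detected only through the $L^2$ pairings $(h,\density)$ (which feel the nonzero boundary data of $\underline h_\p$); translating the abstract unboundedness of this functional into an honest orthonormal sequence with $\sum_n m_n^2 = \infty$ is the crux, after which Kakutani's theorem finishes mechanically.
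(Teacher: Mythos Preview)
Your approach via the Cameron--Martin dichotomy is sound in spirit and is genuinely different from the paper's argument. The paper invokes no abstract Gaussian-measure machinery at all: after conformally mapping to the unit disc and arranging $\underline h_\p(0)\ne 0$, it tests against radially symmetric probability densities $\density_\epsilon$ supported in the annulus $\{1-\epsilon<|z|<1\}$. By the mean value property $(\underline h_\p,\density_\epsilon)=\underline h_\p(0)$ for every $\epsilon$, while $\Var\bigl((h,\density_\epsilon)\bigr)=\iint \density_\epsilon(x)\density_\epsilon(y)G(x,y)\,dx\,dy\to 0$ because Green's function is small near the boundary. A Borel--Cantelli argument along a fast-decreasing sequence of $\epsilon$'s then produces a single measurable event that has probability $1$ under the law of $h$ and probability $0$ under the law of $h+\underline h_\p$. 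This is shorter and entirely elementary, though it uses the specific geometry (harmonic shift, mean value property, boundary behaviour of $G$); your route, by contrast, would apply verbatim to any shift outside the Cameron--Martin space.

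There is, however, a gap in your Kakutani step. From the unboundedness of the functional $\ell:\density\mapsto(\underline h_\p,\density)$ with respect to the norm $\|-\Delta^{-1}\density\|_\nabla$ you conclude that for \emph{any} orthonormal basis $\{g_n\}$ of $H(D)$ with $g_n=-\Delta^{-1}\density_n$ one has $\sum_n m_n^2=\infty$. The parenthetical justification (``a finite sum would make the functional bounded'') does not go through: knowing $\sum_n \ell(g_n)^2<\infty$ only bounds $\ell$ on finite linear combinations of the $g_n$, and since $\ell$ is not known to be continuous on $H(D)$ you cannot pass to limits to conclude boundedness on all of $-\Delta^{-1}H_s(D)$. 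The repair is either to cite the Feldman--H\'ajek theorem outright, or to build the orthonormal sequence by hand from the unboundedness: pick $g_1\in -\Delta^{-1}H_s(D)$ with $\|g_1\|_\nabla=1$ and $|\ell(g_1)|\ge 1$; the restriction of $\ell$ to $\{g_1\}^\perp\cap(-\Delta^{-1}H_s(D))$ is still unbounded (else $\ell$ would be bounded on the whole space), so pick $g_2$ there with $\|g_2\|_\nabla=1$ and $|\ell(g_2)|\ge 1$; iterate. This produces an orthonormal \emph{sequence} (a basis is unnecessary) with $\sum_k m_k^2=\infty$, and Kakutani applied to the independent Gaussians $(h,\density_k)$ finishes exactly as you wrote.
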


\proof We may assume without loss of generality that $D$ is the disc
of radius $1$ centered at the origin (otherwise, we can conformally
map $D$ to this disc) and that $h_\p(0) \not = 0$. For each
$\epsilon > 0$, let $\density_\epsilon$ be a radially symmetric
positive function in $H_s(D)$ which is supported in the annulus $\{z
: 1-\epsilon < |z| < 1 \}$ and has integral one. If $h$ is an
instance of the GFF, then for each $\epsilon$, the expected value of
the Gaussian random variable $(h + h_\p, \density_\epsilon)$ is
$(h_\p, \density_\epsilon) = h_\p(0)$. It is easy to see from
\eref{e.greencovariance} that the variance of this random variable
tends to zero as $\epsilon \to 0$.  It follows from Borel-Cantelli
that for any deterministic sequence of $\epsilon$ which tend to zero
quickly enough, we will have $(h + h_\p, \density_\epsilon) \to
h_\p(0)$ almost surely and $(h , \density_\epsilon) \to 0$ almost
surely, and this implies the singularity. \qed


Say two coupled variables $X$ and $Y$ are {\bf almost independent}
if their joint law is absolutely continuous with respect to the
product of the marginal laws.  We now prove the following:

\begin{lemma}\label{aind}
Suppose that $D$ is the unit disc and that $S_1$ and $S_2$ are
connected closed subsets of $D$ such that
$$\dist(S_1,S_2):=\inf\{d(x,y):x\in S_1,\,y\in S_2\} = \epsilon>0.$$
Then the projections of the GFF on $D$ onto $\Harm_{D\setminus S_1}$
and $\Harm_{D\setminus S_2}$ are almost independent.
\end{lemma}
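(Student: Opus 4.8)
The plan is to realize both projections as linear images of a single Gaussian field, isolate the randomness they genuinely share, and then reduce almost-independence to a Hilbert--Schmidt estimate via the Gaussian dichotomy. Set $\eps=\dist(S_1,S_2)>0$ and fix disjoint open neighborhoods of the two sets, e.g.\ $N_i=\{z\in D:\dist(z,S_i)<\eps/3\}$, so that $\overline{N_1}\cap\overline{N_2}=\emptyset$. Because $N_1\cap N_2=\emptyset$, functions supported in $N_1$ have gradients orthogonal to those supported in $N_2$, so $\Supp_{N_1}\perp\Supp_{N_2}$ in $(\cdot,\cdot)_\nabla$, and we obtain an orthogonal decomposition $H(D)=\Supp_{N_1}\oplus\Supp_{N_2}\oplus\Harm_{N_1\cup N_2}$. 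Projecting the GFF $h$ onto these three orthogonal subspaces yields three independent fields $h^{(1)},h^{(2)},h^{(R)}$ with $h=h^{(1)}+h^{(2)}+h^{(R)}$.

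Writing $P_i$ for the projection onto $\Harm_{D\setminus S_i}$, the two fields in the lemma are $X:=P_1h$ and $Y:=P_2h$. The key observation is that each projection cannot see the other set's localized part: since $N_2\subseteq D\setminus S_1$ we have $\Supp_{N_2}\subseteq\Supp_{D\setminus S_1}$, which is the orthogonal complement of $\Harm_{D\setminus S_1}$, so $P_1h^{(2)}=0$ and hence $X=P_1(h^{(1)}+h^{(R)})$; symmetrically $Y=P_2(h^{(2)}+h^{(R)})$. Thus $X$ is a function of $(h^{(1)},h^{(R)})$ and $Y$ of $(h^{(2)},h^{(R)})$, the three blocks being independent. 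In particular, conditioned on $h^{(R)}$ the fields $X$ and $Y$ are independent, and all of the cross-covariance of the jointly Gaussian pair $(X,Y)$ is carried by the common block, namely $\mathrm{Cov}(X,Y)=\mathrm{Cov}(P_1h^{(R)},P_2h^{(R)})$.

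Now $(X,Y)$ is a centered Gaussian pair whose marginals coincide exactly with its marginals under the product measure, so by the Feldman--H\'{a}jek dichotomy its joint law is either equivalent to or singular with respect to the product of the marginals. Since the Kullback--Leibler divergence of the joint from the product equals the mutual information $I(X;Y)=-\tfrac12\sum_i\log(1-\rho_i^2)$, where the $\rho_i$ are the canonical correlations between the Gaussian spaces generated by $X$ and $Y$, absolute continuity (hence almost-independence) holds precisely when $\sum_i\rho_i^2<\infty$ and $\sup_i\rho_i<1$; equivalently, when the cross-correlation operator $R$ between these two Gaussian spaces is Hilbert--Schmidt with $\|R\|<1$. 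The norm bound is automatic once $R$ is shown compact: a function that is harmonic off $S_1$ and off $S_2$ is harmonic on $(D\setminus S_1)\cup(D\setminus S_2)=D$ (as $S_1\cap S_2=\emptyset$), hence vanishes since it lies in $H^1_0(D)$; so $\Harm_{D\setminus S_1}\cap\Harm_{D\setminus S_2}=\{0\}$, and a singular value equal to $1$ would force a nonzero common vector, a contradiction.

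It therefore remains to prove that $R$ is Hilbert--Schmidt, i.e.\ that the bilinear form $(P_1g_1,P_2g_2)_\nabla$, with $g_1,g_2$ ranging over $\Harm_{N_1\cup N_2}$, is Hilbert--Schmidt. This is exactly where the separation $\eps>0$ enters: the common field $h^{(R)}$ is harmonic on $N_1\cup N_2$, so the data it feeds into $X$ near $S_1$ and into $Y$ near $S_2$ are restrictions of a single harmonic function to two regions at distance at least $\eps/3$, and the operator transporting near-$S_1$ data to near-$S_2$ data is integration against a smooth (because the regions are disjoint) Poisson/Green-type kernel. Interior regularity of harmonic functions across the $\eps$-buffer then forces the singular values of this operator to decay rapidly, giving $\sum_i\rho_i^2<\infty$. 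I expect this quantitative smoothing estimate to be the main technical obstacle; the remainder of the argument is structural and follows from the orthogonal decomposition together with standard Gaussian theory.
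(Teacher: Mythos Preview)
Your orthogonal decomposition and the observation that $P_1h^{(2)}=P_2h^{(1)}=0$ are correct and closely parallel the paper's setup (the paper uses a separating curve $\gamma$ and the two complementary subdomains $D_1,D_2$ rather than metric neighborhoods $N_1,N_2$, but the resulting three-block splitting $h=h_{\tilde D}+h_{\tilde D}^*$ is of the same type as your $h=h^{(1)}+h^{(2)}+h^{(R)}$). Where you diverge is in handling the shared block: you invoke the Feldman--H\'ajek dichotomy and reduce to showing that the cross-correlation operator is Hilbert--Schmidt with norm strictly below $1$. You correctly flag the Hilbert--Schmidt bound as the main technical obstacle, but you do not prove it. The heuristic about interior regularity across the $\eps/3$-buffer points in the right direction, yet turning it into an honest estimate on the singular values of $P_2|_{\Harm_{D\setminus S_1}}$ would take real work, particularly since $S_1,S_2$ are merely closed connected sets with no assumed boundary regularity.

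The paper sidesteps this estimate entirely by replacing Feldman--H\'ajek with a Cameron--Martin shift. The shared harmonic block $h_{\tilde D}^*$ (the analogue of your $h^{(R)}$) is almost surely an actual harmonic function on $\tilde D=D_1\cup D_2$, hence Lipschitz on compact subsets staying away from the separating curve. One then builds, for each $i$, a random Lipschitz function $h_i\in H(D)$ that equals $h_{\tilde D}^*$ on a neighborhood of $S_i$ and vanishes on the other component, so that $P_i(h)=P_i(h_i)+P_i(h_{\tilde D})$. Since $h_{\tilde D}$ is a pair of independent zero-boundary GFFs on $D_1$ and $D_2$, the pair $\bigl(P_1h_{\tilde D},P_2h_{\tilde D}\bigr)$ is already independent; adding the random Cameron--Martin shifts $\bigl(P_1h_1,P_2h_2\bigr)$, which are measurable with respect to $h_{\tilde D}^*$ and hence independent of $h_{\tilde D}$, preserves absolute continuity by the explicit Gaussian density formula for translates. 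This is strictly more elementary than your route: no dichotomy theorem, no spectral estimate, only the pathwise fact that a harmonic function is locally Lipschitz. Your framework could be completed and would yield the canonical correlations as a bonus, but the paper's Cameron--Martin trick is precisely what discharges the obligation you left open.
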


Informally, Lemma \ref{aind} says that the values of an instance
$h$ of the GFF on (an infinitesimal neighborhood of) $S_1$ are
almost independent of the values of $h$ on (an infinitesimal
neighborhood of) $S_2$.

\proof Since the distance between $S_1$ and $S_2$ is positive, there
exists a path $\gamma$ in $\closure D$ which is either simple or a
simple closed loop, such that the distance $\delta$ from $\gamma$ to
$S_1\cup S_2$ is positive and $\gamma$ separates $\closure{S_1}$
from $\closure{S_2}$ in $\closure D$. Let $D_1$ and $D_2$ be the
connected components of $D\setminus\gamma$ containing $S_1$ and
$S_2$.

Let $h_{\tilde D}$ be an instance of the GFF in $\tilde D = \cup
D_j$, and let $h_{\tilde D}^*$ be an independent instance of the
projection of the GFF in $D$ onto $\Harm_{\tilde D}$, as in
Proposition \ref{p.harmdist}.  Then $h = h_{\tilde D} + h_{\tilde
D}^*$ is an instance of the GFF on $D$, by Proposition
\ref{p.harmdist}.  As discussed in Section \ref{s.subdomain},
$h_{\tilde D}^*$ restricted to $\tilde D$ is a random harmonic
(though not bounded) function on $\tilde D$, and $h_{\tilde D}$ is a
sum of independent zero boundary GFFs on $D_1$ and $D_2$.

Next, we will construct continuous functions $h_1, h_2 \in H(D)$
such that each $h_i$ is equal to $h_{\tilde D}^*$ on a $\delta/3$
neighborhood of $S_i$ but vanishes on the component of $\tilde D$
not including $S_i$. For $i \in \{1,2\}$, the function $h_{\tilde
D}$ is Lipschitz on $S_i$. To see this, observe that since
$h_{\tilde D}$ is harmonic and zero on $S_i \cap \partial D$, its
gradient on $\tilde D$ extends continuously to all points on
$\partial D \setminus \gamma$.  Thus it has a maximum on each $S_i$
(since each $S_i$ is a positive distance from $\gamma$). We can then
let $h_i$ be the optimal Lipschitz extension to all of $D$ of the
function which is defined to be $h_{\tilde D}$ on the set of points
in $D$ of distance at most $\delta/3$ to $S_i$ and $0$ on $\partial
\tilde D \cup D_{3-i}$.  Since $h_i$ is Lipschitz, it must, in
particular, belong to $H(D)$.

Now, if $P_i$ denotes the projection onto the space
$\Harm_{D\setminus (S_i)}$, then we have $$(P_1(h), P_2(h)) =
(P_1(h_1) + P_1(h_{\tilde D}), P_2(h_2) + P_2(h_{\tilde D})).$$
However, $P_i(h_{\tilde D})$, for $i \in \{1,2\}$ are, independent
of one another.

The reader may easily check that if $h'$ is any projection of the
GFF onto a closed subspace of $H(D)$ and $a$ is any fixed element of
that subspace, then the law of $a + h'$ is absolutely continuous
with respect to that of $h'$.  In this case $G(h') := (h',
a)_\nabla/\|a\|_\nabla$ is almost surely well defined (once we fix a
basis of $H(D)$ comprised of members of $H_s(D)$; recall Section
\ref{ss.notation}) and is a Gaussian with zero mean and unit
variance.  The Radon-Nikodym derivative is then given by
$$\exp \bigl(-\|G(h')^2/2\| \bigr) / \exp \bigl( -(G(h') + \|a\|_\nabla)^2 \bigr).$$
Absolute continuity similarly follows if $a$ is random and
independent of $h'$. Thus the law of $$(P_1(h_1) + P_1(h_{\tilde
D}), P_2(h_2) + P_2(h_{\tilde D})).$$ is absolutely continuous with
respect to the law of the (independent, as discussed above) pair
$$(P_1(h_{\tilde D}), P_2(h_{\tilde D})),$$
which is absolutely continuous with respect to the independent
product of the marginals of $(P_1(h), P_2(h))$ by the same argument
applied to each component separately. \qed

\subsection{Local sets for discrete fields}

In this subsection only we will use the symbol $h$ to denote an
instance of the discrete GFF instead of the continuum GFF. We will
prove some basic results that will have analogs when $h$ is an
instance of the continuum GFF. If $D$ is a $\TG$-domain, then a
random subset $A$ of the set $V$ of $\TG$ vertices in $\overline D$
--- coupled with an instance $h$ of the discrete Gaussian free
field on these vertices with some boundary conditions
--- is called {\bf local} if conditioned on $A$ and the
restriction of $h$ to $A$, the law of $h$ is almost surely the
discrete GFF whose boundary conditions are the given values of $h$ on $A
\cup \partial D$.

We will now observe some simple facts about discrete local sets;
we will extend these facts in the next section to the continuum
setting.

Recall that for any deterministic subset $B$ of vertices, the
space of functions supported on $B$ is orthogonal (in the
Dirichlet inner product) to the space of functions that are
harmonic at every vertex in $B$.  Denote by $B^c$ the set of vertices
in $\overline D$ that do not lie in $B$.

\begin{lemma} \label{l.localiff} Let $h_B$ denote $h$ restricted to
a subset $B$ of the vertices of $D$.
Let $A$ be a random subset of the vertices of $D$,
coupled with an instance $h$ of the discrete Gaussian free field on $D$ with boundary
conditions $h_\p$.  Then the following are equivalent:
\begin{enumerate}
\item \label{i.Aislocal} $A$ is local.
\item \label{i.Aislocalequiv1} For each fixed subset $B\subset V\cap \closure D$, the following holds:
conditioned on $h_{B^c}$ (for almost all choices of $h_{B^c}$
in any version of the conditional probability), the event $A\cap B=\emptyset$
and the random variable $h_B$ are independent.
\item \label{i.Aislocalequiv2} For each fixed subset $B\subset V\cap \closure D$, the following holds:
let $S$ be the event that $A$ intersects $B$, and let $\tilde A$ be equal to $A$
on the event $S^c$ and $\emptyset$ otherwise.  Then conditioned on $h_{B^c}$, the
pair $(S,\tilde A)$ is independent of $h_B$.
\end{enumerate}
\end{lemma}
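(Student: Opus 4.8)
The engine behind all three equivalences is the spatial Markov property of the discrete GFF, which I would use repeatedly: for any fixed $B\subset V\cap\closure D$, conditioned on $h_{B^c}$ the field $h_B$ is a discrete GFF on $B$ whose boundary values are the deterministic function of $h_{B^c}$ obtained by reading off $h$ on the vertices of $B^c$ adjacent to $B$. Since $D$ is a $\TG$-domain, $V$ is finite, so $A$ takes only finitely many values and every conditional law below is an honest regular conditional distribution, with no measure-theoretic subtleties. My first move is to recast condition \ref{i.Aislocalequiv1} in a symmetric working form. Taking $B$ and setting $A_0:=B^c$, the event $A\cap B=\emptyset$ is exactly $\{A\subseteq A_0\}$ and $h_{B^c}=h_{A_0}$, so condition \ref{i.Aislocalequiv1} for $B$ asserts precisely that $\Prob[A\subseteq A_0\mid h]=\Prob[A\subseteq A_0\mid h_{A_0}]$; that is, \emph{$\Prob[A\subseteq A_0\mid h]$ is a function of $h_{A_0}$ alone}. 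I would carry this reformulation (ranging over all $A_0$) throughout.

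For \ref{i.Aislocal}$\Rightarrow$\ref{i.Aislocalequiv1}, fix $A_0=B^c$ and work on the event $\{A\subseteq A_0\}$, where $h_A$ is a function of $h_{A_0}$. By locality, conditioned on $(A,h_A)$ the field off $A$ is a GFF with boundary data $h_A$; since $B\subseteq A^c$, a \emph{second} application of the Markov property (conditioning this GFF further on its values on $A_0\setminus A$) shows that the conditional law of $h_B$ given $(A,h_{A_0})$ is the Markov law $\E[\,\cdot\mid h_{A_0}]$, with no residual dependence on which $A\subseteq A_0$ occurred; this gives condition \ref{i.Aislocalequiv1}. The passage \ref{i.Aislocalequiv1}$\Leftrightarrow$\ref{i.Aislocalequiv2} is where Möbius inversion over the subset lattice enters. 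Inverting $\Prob[A\subseteq A_0\mid h]=\sum_{A_0'\subseteq A_0}\Prob[A=A_0'\mid h]$ gives
\[
\Prob[A=A_0\mid h]=\sum_{A_0'\subseteq A_0}(-1)^{|A_0|-|A_0'|}\,\Prob[A\subseteq A_0'\mid h],
\]
and, each summand being a function of $h_{A_0'}$, we learn that $\Prob[A=A_0\mid h]$ is a function of $h_{A_0}$ for every $A_0$. For $A_0\subseteq B^c$ this is a function of $h_{B^c}$, so every atom $\{A=A_0\}$ of $(S,\tilde A)$, and with it the residual atom $S$, is conditionally independent of $h_B$ given $h_{B^c}$, which is \ref{i.Aislocalequiv2}. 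The reverse \ref{i.Aislocalequiv2}$\Rightarrow$\ref{i.Aislocalequiv1} is immediate, as $S^c=\{A\cap B=\emptyset\}$ is $(S,\tilde A)$-measurable.

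For the main step \ref{i.Aislocalequiv1}$\Rightarrow$\ref{i.Aislocal}, set $\psi_{A_0}(h_{A_0}):=\Prob[A=A_0\mid h]$, a function of $h_{A_0}$ by the Möbius computation. Locality asserts that the conditional law of $h$ given $(A,h_A)$ is the GFF with boundary data $h_A$; by a product/monotone-class reduction it suffices to check, for each $A_0$ and test functions $\Phi,F$,
\[
\E\bigl[\mathbf 1_{A=A_0}\,\Phi(h_{A_0})\,F(h_{A_0^c})\bigr]=\E\bigl[\mathbf 1_{A=A_0}\,\Phi(h_{A_0})\,\bar F(h_{A_0})\bigr],
\]
where $\bar F(h_{A_0})$ is the average of $F$ under the GFF on $A_0^c$ with boundary values $h_{A_0}$. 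Taking conditional expectation given $h$ replaces $\mathbf 1_{A=A_0}$ by $\psi_{A_0}(h_{A_0})$ on the left, and, the $h$-marginal of the coupling being the GFF, its Markov property yields $\E[F(h_{A_0^c})\mid h_{A_0}]=\bar F(h_{A_0})$; both sides then reduce to $\E[\psi_{A_0}(h_{A_0})\,\Phi(h_{A_0})\,\bar F(h_{A_0})]$, establishing locality.

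I expect the main obstacle to be exactly this last direction. Condition \ref{i.Aislocalequiv1} only controls the downward-closed events $\{A\subseteq A_0\}$, whereas locality is a statement about the exact events $\{A=A_0\}$, and these genuinely differ because $\{A=A_0\}$ can depend on $h_{A_0^c}$. The Möbius inversion is precisely the device converting control of $\{A\subseteq A_0\}$ into control of $\{A=A_0\}$, and routing the verification of locality through the conditional-probability functions $\psi_{A_0}$ — rather than conditioning directly on the possibly $h$-dependent events — is what keeps the argument free of circularity.
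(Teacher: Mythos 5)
Your proof is correct, and while it establishes the same equivalences, the key step is implemented differently from the paper. The paper proves (\ref{i.Aislocal})$\Rightarrow$(\ref{i.Aislocalequiv2}) by a resampling argument (sample $A$, then $h_A$, then $h_{B^c}$, then $h_B$), gets (\ref{i.Aislocalequiv1}) trivially from (\ref{i.Aislocalequiv2}), and closes the loop (\ref{i.Aislocalequiv1})$\Rightarrow$(\ref{i.Aislocal}) by induction on $|C|$: it conditions on $h_C$ and $\{A\subseteq C\}$, applies the inductive hypothesis to each proper-subset atom $\{A=C'\}$, $C'\subsetneq C$, and deduces the claim for the residual atom $\{A=C\}$ whenever $\P[A=C]\ne 0$. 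Your M\"obius inversion over the subset lattice is the explicit, all-at-once version of that same inclusion--exclusion, carried out on the scalar conditional probabilities $\Prob[A\subseteq A_0\mid h]$ rather than on conditional laws of $h$. This buys you a closed formula showing that $\Prob[A=A_0\mid h]$ is $\sigma(h_{A_0})$-measurable for every $A_0$, from which item (\ref{i.Aislocalequiv2}) drops out as a corollary of item (\ref{i.Aislocalequiv1}) (the paper instead derives (\ref{i.Aislocalequiv2}) directly from locality), and from which locality itself follows by a short tower-property computation against the Markov property of the marginal field, with no positivity hypothesis on $\P[A=A_0]$ (on null atoms your verification identity reads $0=0$). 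Your diagnosis of the crux --- that item (\ref{i.Aislocalequiv1}) only controls the downward-closed events $\{A\subseteq A_0\}$ while locality concerns the exact events $\{A=A_0\}$ --- is exactly the obstruction the paper's induction is designed to overcome, and both devices rely essentially on the finiteness of the vertex set; your forward direction (\ref{i.Aislocal})$\Rightarrow$(\ref{i.Aislocalequiv1}) is the paper's argument in miniature. In short: sound, and arguably a cleaner formalization of the step the paper handles by induction.
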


\proof To show that (\ref{i.Aislocal}) locality implies (\ref{i.Aislocalequiv2}), it is enough to
note that if we condition on $h_{B^c}$ and $A$, then (if $A$ does not intersect $B$),
the conditional law of $h_B$ is its conditional law given just
$h_{B^c}$. We will show this by first sampling $A$, then the values of $h$
on $A$, then $h_{B^c}$, then $h_B$.  By the locality definition, the conditional law
of $h$, given $A$ and $h_A$, is that of a DGFF whose boundary conditions
are the given values of $h$ on $A \cup \partial D$.  Therefore, after further conditioning on the event
$S$ and $h_{B^c}$, the conditional law of $h$ will be that
of the DGFF with the given heights on $A \cup \partial D \cup B^c$.  In particular,
once $h_{B^c}$ and $S^c$ are given, the conditional law of $h$ does not depend on $A$.
Since this conditional law of $h$ is
the same as the conditional law given only $h_{B^c}$ (and no information about $S$ or $A$), it follows
that given $h_{B^c}$, the conditional law of $h$ is independent of $(S, \tilde A)$.


Next, (\ref{i.Aislocalequiv2}) clearly implies (\ref{i.Aislocalequiv1}).
We will now assume (\ref{i.Aislocalequiv1}) and derive (\ref{i.Aislocal}).
To do this, we prove the statement ``If $\P[A=C]\ne 0$,
then conditioned on $A=C$ and
the heights on $C\cup \p D$, the law of $h$ is the law of a DGFF given those
heights'' by induction on the size of $C$. The statement is clearly true if $C$ is
empty.  For general $C$, we know from our assumption that conditioned on $h_C$
and $A \subset C$, the law of $h$ is the law of a DGFF with the given
heights on $C$.  By the inductive hypothesis, we know that if we condition
on $h_C$ and any particular choice for $A$ which is a proper subset of
$C$, we will have this same conditional law; considering separately the cases $A=C$ and $A$ a proper
subset of $C$, it follows that we also
have this law if we condition on $h_C$ and $A=C$, provided that $\P[A=C]\ne0$.
\qed

The inductive technique used to derive (\ref{i.Aislocal}) from (\ref{i.Aislocalequiv1}) above is a fairly
general one.  We will make reference to it later in the paper as well.

We say that $A$ is {\bf algorithmically local} if there is a
(possibly random) algorithm for generating $A$ from the values of
$h$ on vertices of $D$ such that almost surely every vertex whose
height $h(v)$ is used by the algorithm is included in $A$. It
is easy to see that algorithmically local sets are local:

\begin{proposition}
If a random set $A$ coupled with the discrete GFF on the vertices
of $D$ with boundary conditions $h_\p$ is algorithmically local,
then it is also local.
\end{proposition}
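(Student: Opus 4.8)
The plan is to show that algorithmic locality implies locality by verifying condition (\ref{i.Aislocalequiv2}) of Lemma \ref{l.localiff}, or perhaps more directly by checking the locality definition itself using the structure of the revealing algorithm. The intuition is clear: an algorithmically local set is generated by an exploration process that only queries heights $h(v)$ at vertices it subsequently includes in $A$, so the values of $h$ away from $A$ are never consulted during the construction, and hence the conditional law of $h$ off $A$ (given $A$ and $h_A$) should be the DGFF with the appropriate boundary data. The main work is to make this ``never consulted'' intuition precise in the language of conditional laws.

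First I would set up the algorithm as a sequence of steps: at each step the algorithm, based on the heights revealed so far, either halts or selects a new vertex $v$ to query; by the algorithmic locality hypothesis, every queried vertex is (almost surely) placed in $A$. I would argue by induction on the number of steps, or equivalently by a filtration argument, that conditioned on the sequence of queried vertices and their revealed heights at any stage, the conditional law of $h$ on the remaining (unqueried) vertices is exactly that of a DGFF with boundary conditions given by the revealed heights on the queried set together with $\partial D$. This uses the \emph{domain Markov property} of the discrete GFF: conditioning a DGFF on its values at a set of vertices yields a DGFF on the complement with those values as boundary data. At the moment the algorithm halts, the set of queried vertices is precisely $A$, and the revealed heights are precisely $h_A$, so the conditional law of $h$ given $(A, h_A)$ is the DGFF with boundary data $h$ on $A \cup \partial D$, which is exactly the locality definition.

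The one subtlety I would take care to address is the adaptedness of the stopping rule: the decision of which vertex to query next, and the decision to halt, must depend only on heights already revealed (i.e. on heights at vertices already committed to $A$), so that the conditioning at each step respects the Markov structure. Because the algorithm is permitted to be random, I would also note that any internal randomness can be taken independent of $h$, so it does not disturb the conditional-law computation. I expect the main obstacle to be bookkeeping rather than conceptual: one must correctly formulate the claim as a statement about regular conditional laws indexed by a stopping time of the vertex-revealing filtration, and verify that the inductive step genuinely uses only the discrete domain Markov property. Once that is set up cleanly, each inductive step is an immediate application of the Markov property, and the conclusion follows by taking the halting time.
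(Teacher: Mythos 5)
Your argument is correct, and in fact the paper supplies no proof at all for this proposition --- it is introduced with ``It is easy to see that algorithmically local sets are local'' and left at that. Your sequential-revealment argument (induction over the steps of the algorithm, using adaptedness of the querying rule, independence of the algorithm's internal randomness from $h$, and the domain Markov property of the DGFF at each step) is precisely the argument the authors have in mind, and it is consistent in style with the inductive conditioning arguments they do spell out in the proof of Lemma \ref{l.localiff}. The one point worth making explicit, which your write-up implicitly identifies $A$ with the set of queried vertices: the definition of algorithmically local only requires that every queried vertex lie in $A$, so $A$ may also contain vertices whose heights were never consulted. This is handled by one further application of the domain Markov property --- after the algorithm halts, condition additionally on the values of $h$ at the unqueried vertices of $A$ --- and then by observing that the resulting conditional law of $h$ off $A$ depends on the algorithm's run only through the pair $(A, h_A)$, so it is also the conditional law given $(A,h_A)$ alone. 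With that bookkeeping included, your proof is complete.
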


\old{
\begin{proof}
The first statement is essentially true by definition.  To prove the second
statement, if $A$ is an algorithmically local set, we may take $A_i$ to be
the first $i$ vertices $v$ whose value $h(v)$ is used in the
algorithm, or $A$ itself if $A$ contains fewer than $i$ vertices.
Each of these $A_i$ is algorithmically local, and hence local as
well.  (In particular, this implies that $A$ is local.)

To prove the converse, first observe that if $|A| \leq 1$ almost
surely, then $A$ is independent of $h$. First note that $(h,
\phi_v)_\nabla$, where $\phi_\nabla$ is supported on a single
vertex $v$, is independent of the event $A \cap B \not = 0$ for
any $B$ containing $v$.  In particular, taking $B$ to be the
complement of a single vertex $w$, the event $A = \{w\}$ is
independent of $(h, \phi_v)_\nabla$ for each $w \not = v$, and
hence for $w = v$ as well.  We conclude that $A$ is independent of
$(h, \phi_v)_\nabla$.

Now, we claim that the event $A = \{v\}$ is independent of $h(v)$.
Write $X_1 = \mathbb E ( (h, \phi_v)_\nabla|h(v))$ and $X_2 = (h,
\phi_v)_\nabla - X_1$ and $X_3 = (h, \phi_v)_\nabla$.  Clearly,
$X_1+X_2=X_3$ and $X_1$ and $X_2$ are independent of one another.

Now, since $X_1$ and $X_2$ are independent, the laws of $X_3$ and
$X_1$ determine the law of $X_2$ (as is readily seen, e.g., by looking
at their characteristic functions). Since $X_1$ given $A = \{v\}$ is
independent of $X_3$ given $A= \{v \}$, this implies that $X_2$
given $A = \{v \}$ has the same law as $X_2$ without this
conditioning; hence $X_2$ is independent of $A = \{v\}$. Now,
first sample the event $A = \{v \}$ and the function $h(v)$; these
are independent of one another, and conditioned on the outcome of
these samples, we can now use induction.  \qed
\end{proof}
}

For a trivial example, $A$ is algorithmically local whenever its
law is independent of $h$---in particular, if $A$ is
deterministic. The set $\{v:h(v) < 0 \}$ is not
local; however, the set of $v$ that are in or adjacent to a
boundary-intersecting component of the subgraph of $\TG$ induced
by $\{v:h(v) < 0\}$ {\em is} algorithmically local.  Another algorithmically local set is the set of hexagons
on either side of the discrete interface in Figure \ref{f.interface}.

Given two distinct random sets $A_1$ and $A_2$ (each coupled with a
discrete GFF $h$), we can construct a three way coupling $(h, A_1, A_2)$ such that
the marginal law of $(h, A_i)$ (for $i \in \{1,2\}$) is the given
one, and conditioned on $h$, the sets $A_1$ and $A_2$ are
independent of one another.  This can be done by first sampling $h$ and
then sampling $A_1$ and $A_2$ independently from the regular conditional
probabilities.  The union of $A_1$ and $A_2$ is then a new random set
coupled with $h$.  We denote this new random set by $A_1 \ccup A_2$ and
refer to it as the {\bf conditionally independent union} of
$A_1$ and $A_2$.


\begin{lemma}\label{l.independentpair}
Suppose that $\mathcal A$ and $\mathcal B$ and $\mathcal C$ are $\sigma$-algebras
on which a probability measure is defined for which
\begin{enumerate}
\item $\mathcal A$ is independent of $\mathcal B$,
\item $\mathcal A$ is independent of $\mathcal C$, and
\item given $\mathcal A$, the $\sigma$-algebras $\mathcal B$ and $\mathcal C$ are independent of each other.
\end{enumerate}
Then $\mathcal A$ is independent of the $\sigma$-algebra generated by both $\mathcal B$ and $\mathcal C$.
\end{lemma}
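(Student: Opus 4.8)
The plan is to reduce the statement to a check of independence on a generating $\pi$-system and then invoke Dynkin's $\pi$--$\lambda$ theorem. The finite intersections $B\cap C$ with $B\in\mathcal B$ and $C\in\mathcal C$ form a $\pi$-system (since $(B_1\cap C_1)\cap(B_2\cap C_2)=(B_1\cap B_2)\cap(C_1\cap C_2)$ is again of this form) that generates the $\sigma$-algebra $\sigma(\mathcal B,\mathcal C)$ generated by $\mathcal B$ and $\mathcal C$. Hence it suffices to verify $\P[A\cap B\cap C]=\P[A]\,\P[B\cap C]$ for every $A\in\mathcal A$, $B\in\mathcal B$, $C\in\mathcal C$; for each fixed $A$, the collection of $D\in\sigma(\mathcal B,\mathcal C)$ satisfying $\P[A\cap D]=\P[A]\,\P[D]$ is a $\lambda$-system containing this $\pi$-system, so the desired independence extends automatically to all of $\sigma(\mathcal B,\mathcal C)$.

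The key computation is to evaluate $\P[A\cap B\cap C]$ by conditioning on $\mathcal A$. First I would invoke hypothesis (3): given $\mathcal A$, the events $B$ and $C$ are conditionally independent, so $\P[B\cap C\mid\mathcal A]=\P[B\mid\mathcal A]\,\P[C\mid\mathcal A]$ almost surely. Next I would use hypotheses (1) and (2): the independence of $\mathcal A$ from $\mathcal B$ and from $\mathcal C$ gives $\P[B\mid\mathcal A]=\P[B]$ and $\P[C\mid\mathcal A]=\P[C]$ almost surely, whence $\P[B\cap C\mid\mathcal A]$ is almost surely equal to the deterministic constant $\P[B]\,\P[C]$. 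Taking expectations against $\mathbf 1_A$ then yields $\P[A\cap B\cap C]=\E[\mathbf 1_A\,\P[B\cap C\mid\mathcal A]]=\P[A]\,\P[B]\,\P[C]$, while taking the expectation alone gives $\P[B\cap C]=\P[B]\,\P[C]$. Combining these two identities gives exactly $\P[A\cap B\cap C]=\P[A]\,\P[B\cap C]$, which is what the $\pi$-system check requires.

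I do not expect a serious obstacle, since this is a standard fact about conditional independence; the only point demanding care is the bookkeeping of almost-sure equalities. Hypotheses (1)--(3) are most naturally read as statements about (regular) conditional probabilities, and one must make sure that the product $\P[B\mid\mathcal A]\,\P[C\mid\mathcal A]$ genuinely collapses to a constant after applying (1) and (2), so that the final integration against $\mathbf 1_A$ factors cleanly rather than leaving a residual dependence on $\mathcal A$. Once that collapse is justified, the passage from the generating $\pi$-system to $\sigma(\mathcal B,\mathcal C)$ via the $\pi$--$\lambda$ theorem is routine, which completes the proof.
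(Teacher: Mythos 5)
Your proof is correct and follows essentially the same route as the paper: condition on $\mathcal A$, use hypothesis (3) to factor $\P[B\cap C\mid\mathcal A]$, use (1) and (2) to replace the conditional probabilities by constants, and integrate against $\mathbf 1_A$ to get $\P[A\cap B\cap C]=\P[A]\,\P[B]\,\P[C]$. The only difference is that you spell out the $\pi$--$\lambda$ step extending from intersections $B\cap C$ to all of $\sigma(\mathcal B,\mathcal C)$, which the paper leaves implicit.
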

\proof
Let $A,B,C$ be events in $\mathcal{A,B,C}$, respectively, then
\begin{multline*}
\Pb{A\cap B\cap C} =
\Eb{ \Pb{B\cap C\cap A \md \mathcal A}}
=\Eb{\Pb{B\cap C\md \mathcal A}\,1_A}
\\
=\Eb{
\Pb{B\md \mathcal A}\,
\Pb{C\md \mathcal A}\,
1_A}
=\Eb{ \Ps{B}\, \Ps{C}\, 1_A}
= \Ps{B}\,\Ps{C}\,\Ps{A}\,.
\qed
\end{multline*}

\begin{lemma} \label{discretelocalunion}
If $A_1$ and $A_2$ are local sets coupled with $h$, then $A:=A_1 \ccup A_2$ is also local.
In fact, we have the slightly stronger statement that given the
pair $(A_1,A_2)$ and the values of $h$ on $A$, the conditional law of $h$ off
of $A$ is that of a DGFF with the given boundary values.
\end{lemma}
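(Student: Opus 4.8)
The plan is to reduce locality to a clean measurability statement about conditional probabilities and then exploit the conditional independence built into the definition of $A_1\ccup A_2$. Throughout I would use that the discrete GFF is a Gaussian Markov random field: for any deterministic set $C$ of vertices of $\closure D$, the conditional law of $h$ given $h_C$ is that of a DGFF on $D\setminus C$ with boundary values $h_{C\cup\p D}$.

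First I would record the following reformulation of locality, in the spirit of Lemma~\ref{l.localiff}: a set $A$ coupled with $h$ is local if and only if for every deterministic $C\subseteq V\cap\closure D$ the event $\{A=C\}$ is conditionally independent of $h_{C^c}$ given $h_C$; equivalently, $\P[A=C\mid h]$ is almost surely a $\sigma(h_C)$-measurable function. One direction is immediate: if $A$ is local, then given $\{A=C\}$ and $h_C$ the law of $h_{C^c}$ is DGFF on $D\setminus C$, which by the Markov property is also the law of $h_{C^c}$ given $h_C$ alone; hence $\{A=C\}$ and $h_{C^c}$ are conditionally independent given $h_C$. Conversely, if this conditional independence holds, then given $\{A=C\}$ and $h_C$ the law of $h_{C^c}$ equals its law given $h_C$ alone, which is DGFF with the stated boundary values, so $A$ is local.

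Next I would invoke the construction of the conditionally independent union. By definition, given $h$ the sets $A_1$ and $A_2$ are independent, so for all deterministic $C_1,C_2$,
$$\P[A_1=C_1,\,A_2=C_2\mid h]=\P[A_1=C_1\mid h]\,\P[A_2=C_2\mid h].$$
Applying the reformulation to each local set $A_i$, the factor $\P[A_i=C_i\mid h]$ is $\sigma(h_{C_i})$-measurable, so the product is measurable with respect to $\sigma(h_C)$, where $C:=C_1\cup C_2$. Taking conditional expectation given $h_C$ and using the tower property then yields $\P[A_1=C_1,\,A_2=C_2\mid h]=\P[A_1=C_1,\,A_2=C_2\mid h_C]$; that is, the event $\{A_1=C_1,\,A_2=C_2\}$ is conditionally independent of $h_{C^c}$ given $h_C$.

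Finally I would conclude exactly as in the backward direction of the reformulation: on this event, conditioning additionally on $h_C$, the law of $h_{C^c}$ equals its law given $h_C$, namely the DGFF on $D\setminus C$ with boundary values $h_{C\cup\p D}$. Since $C=C_1\cup C_2$ and $h_C$ is read off from the values of $h$ on $A=A_1\cup A_2$, this is precisely the stronger statement. Locality of $A$ itself then follows by averaging over all partitions $(C_1,C_2)$ with $C_1\cup C_2=C$ via the tower property, since the resulting DGFF law depends only on $(C,h_C)$ and not on the partition. I expect the only genuine subtlety to be the careful bookkeeping in the measurability reformulation of locality (the two Bayes arguments combined with the DGFF Markov property); once that is in place, the remaining steps are essentially formal.
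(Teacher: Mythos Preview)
Your proof is correct and takes a genuinely different route from the paper. The paper works through characterization (\ref{i.Aislocalequiv2}) of Lemma~\ref{l.localiff}: for a test set $B$ it records, conditionally on $h_{B^c}$, the independence of $h_B$ from each of $(S_1,\tilde A_1)$ and $(S_2,\tilde A_2)$, together with the conditional independence of the two pairs given $h_B$ (coming from the definition of $\ccup$), and then applies the abstract Lemma~\ref{l.independentpair} to conclude that $h_B$ is independent of the full quadruple. Locality and the stronger statement are then read off via Lemma~\ref{l.localiff}.

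You instead introduce a fourth equivalent characterization---that $\Pb{A=C\md h}$ is $\sigma(h_C)$-measurable for every $C$---which you justify directly from the DGFF Markov property and a Bayes argument. This lets you simply multiply $\Pb{A_1=C_1\md h}\,\Pb{A_2=C_2\md h}$ and observe the product is $\sigma(h_{C_1\cup C_2})$-measurable. Your approach is arguably more transparent in the discrete setting and avoids the auxiliary Lemma~\ref{l.independentpair} entirely. The paper's route, by contrast, is phrased in terms of a test set $B$ and the pair $(S,\tilde A)$; this is less direct here but is exactly the formulation that survives to the continuum (Lemma~\ref{l.localdefequivalence} and Lemma~\ref{continuumlocalunion}), where one cannot enumerate the values of $A$ and must argue through open $B$'s and the projection onto $\Harm_B$. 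So both arguments buy the same conclusion discretely, but the paper's version is the one that ports.
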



\proof Let $S_1$ and $S_2$ be the events that $A_1$ and $A_2$ hit $B\subset \closure D\cap V$,
respectively.
Let $\tilde A_i$
be equal to $A_i$ on the event $S_i^c$ and $\emptyset$ otherwise.  For almost
all $h_{B^c}$ we have (by Lemma \ref{l.localiff}) that conditioned on $h_{B^c}$
\begin{enumerate}
\item $(S_1, \tilde A_1)$ is independent of $h_B$,
\item $(S_2, \tilde A_2)$ is independent of $h_B$, and
\item given $h_B,$ the events $(S_1,\tilde A_1)$ and $(S_2,\tilde A_2)$ are independent of each other.
\end{enumerate}
(The last item follows from the definition of conditionally independent
union.) Then Lemma \ref{l.independentpair} implies that conditioned on $h_{B^c}$,
the random variable $h_B$ must be independent of $(S_1, S_2, \tilde A_1, \tilde A_2)$.
In particular, this shows that $h_B$ is independent of the union of the events
$S_1$ and $S_2$, which implies that the conditionally independent union of $A_1$ and
$A_2$ is local by Lemma \ref{l.localiff}.  It also shows the final claim in Lemma \ref{discretelocalunion},
namely that conditioned on $A_1 \cup A_2 =C$ and
the values of $h$ on $C$ and on the pair $(A_1, A_2)$, the law of $h$ is the law of a DGFF given those
values.
\qed

The following is an immediate consequence of the definition
of a local set.
\begin{proposition} \label{discretelocalharmonic}
If $A$ is a local set, then conditioned on $A$ (in any regular
conditional probability) the expected value of $h(v)$ is, as a
function of $v$, almost surely harmonic in the complement of $A$.
\end{proposition}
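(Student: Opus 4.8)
The plan is to reduce the proposition to the elementary fact that the mean of a discrete GFF with prescribed boundary data is the discrete harmonic extension of that data, combined with the observation that discrete harmonicity is a linear condition and therefore survives averaging.

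First I would recall the definition of locality stated at the beginning of this subsection: conditioned on $A$ and the restriction $h_A$ of $h$ to $A$, the conditional law of $h$ is almost surely that of the DGFF whose boundary values are the given heights on $A\cup\partial D$. I then invoke the standard fact that for the discrete GFF on the graph $\TG\cap\overline D$ with prescribed boundary values, the function $v\mapsto\E[h(v)]$ is the discrete harmonic extension of those values, i.e.\ it satisfies $\phi(v)=\deg(v)^{-1}\sum_{w\sim v}\phi(w)$ at every vertex $v$ of $D$ not lying in $A\cup\partial D$. Applying this to the conditional law above, the function $\phi_A(v):=\E[h(v)\mid A,h_A]$ is, almost surely, discrete harmonic at every vertex of $D$ outside $A$.

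Next I would pass from conditioning on $(A,h_A)$ to conditioning on $A$ alone via the tower property, writing $\E[h(v)\mid A]=\E[\phi_A(v)\mid A]$. The key point is that the harmonicity of $\phi_A$ at a vertex $v\notin A\cup\partial D$ is the \emph{linear} identity $\phi_A(v)=\deg(v)^{-1}\sum_{w\sim v}\phi_A(w)$, and this identity holds almost surely. Taking $\E[\cdot\mid A]$ of both sides, using linearity of conditional expectation and the fact that the neighbor sum is finite, yields $\E[h(v)\mid A]=\deg(v)^{-1}\sum_{w\sim v}\E[h(w)\mid A]$ for every such $v$. This is exactly the assertion that $v\mapsto\E[h(v)\mid A]$ is harmonic in the complement of $A$.

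I do not expect a genuine obstacle, which is why the authors label the statement immediate; the only point deserving a word of care is integrability and the legitimacy of interchanging the neighbor-average with the conditional expectation. Since the conditioned field is a finite-dimensional Gaussian vector (the vertex set of $\overline D$ being finite), all moments are finite and $h(v)$ is integrable, so the conditional expectations are well defined and the interchange with a finite sum is automatic.
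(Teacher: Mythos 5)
Your proof is correct and is exactly the argument the authors have in mind when they call the proposition ``an immediate consequence of the definition of a local set'': condition on $(A,h_A)$ to get a DGFF whose mean is the discrete harmonic extension of the data on $A\cup\partial D$, then average over $h_A$ using the tower property and the linearity of the discrete harmonicity condition. The integrability point you flag is indeed the only thing to check, and it is trivial here since the field is a finite-dimensional Gaussian vector.
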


Note that conditioned on $A$, the restriction of $h$ to $A$ is not
deterministic; thus we would not expect the expectation of $h$
conditioned on $A$ to be the same as the expectation conditioned on $A$ {\em and} the values of $h$ in $A$ (though something like this
will turn out to hold for our continuum level sets).

\begin{remark}
Although we will not use this fact here, we remark that Lemmas \ref{l.localiff} and \ref{discretelocalunion}
are true in much greater generality.  Suppose that $h$ is any random function from a finite
set $V$ to a measure space $X$, and that for each $f:V \to X$ and each subset $B$ of $V$ we are given
a probability measure $\Phi(f,B)$ on functions from $V$ to $X$, and for each $B$, the measure $\Phi(f,B)$
is a regular conditional probability for $h$ given its values on $V \setminus B$.  (In the case of the DGFF on
a graph $G$ with vertex set $V$, this $\Phi(f,B)$ is simply the DGFF with boundary conditions given by
the values of $f$ on $V \setminus B$ and the original boundary vertices.)  We can then define a random
set $A$ coupled with $h$ to be {\bf local} if $\Phi(h,A)$ is a regular version of
the conditional probability of $h$ given $A$ and the values of $h$ on $A$.  The proofs of
Lemmas \ref{discretelocalunion} and \ref{l.localiff} apply in this generality without modification.
\end{remark}

\subsection{Local sets for the GFF}


Let $\Gamma$ be the space of all closed (with respect to the $d_*$ metric) nonempty subsets of
$\overline\H \cup \{\infty\}$.
We will always view $\Gamma$ as a metric space, endowed with the
{\bf Hausdorff} metric induced by $d_*$, i.e., the distance
between sets $S_1, S_2 \in \Gamma$ is
$$\dhaus(S_1,S_2):=\max\Bigl \{ \sup_{x \in S_1} d_*(x, S_2), \sup_{y \in S_2} d_*(y,
S_1)\Bigr \}.$$  Note that $\Gamma$ is naturally equipped with the Borel $\sigma$-algebra on
$\Gamma$ induced by this metric.  It is well known (and the reader
may easily verify) that $\Gamma$ is a compact metric space.  Note that the elements of $\Gamma$ are themselves
compact in the $d_*$ metric.

Given $A \subset \Gamma$, let $A_\delta$ denote the closed set containing all points in $\H$
whose $d_*$ distance from $A$ is at most $\delta$.
Let $\mathcal A_\delta$ be the
smallest $\sigma$ algebra in which $A$ and the restriction of $h$
(as a distribution) to the interior of $A_\delta$ are measurable.
Let $\mathcal A = \bigcap_{\delta \in \mathbb Q, \delta > 0} \mathcal A_\delta$.
Intuitively, this is the smallest $\sigma$-field in which $A$ and the values of
$h$ in an infinitesimal neighborhood of $A$ are measurable.

\begin{lemma} \label{l.localdefequivalence}
Let $D$ be a simply connected planar domain, suppose that $(h, A)$ is
a random variable which is a coupling of an instance $h$ of the GFF with
a random element $A$ of $\Gamma$.  Then the following are equivalent:
\begin{enumerate}
\item \label{i.Bcond} For each deterministic open $B \subset D$, we have that {\em given} the
projection of $h$ onto $\Harm_B$, the event $A \cap B = \emptyset$
is independent of the projection of $h$ onto $\Supp_B$.  In other words,
the conditional probability that $A \cap B = \emptyset$ given $h$ is a measurable function of
the projection of $h$ onto $\Harm_B$.

\item \label{i.B2cond} For each deterministic open $B \subset D$, we have that {\em given} the
projection of $h$ onto $\Harm_B$, the pair $(S, \tilde A)$
(defined as in Lemma \ref{l.localiff}) is independent of the projection of $h$ onto $\Supp_B$.

\item \label{i.Acond} Conditioned on $\mathcal A$, (a regular
version of) the conditional law of $h$ is that of $h_1 + h_2$ where
$h_2$ is the GFF with zero boundary values on $D\setminus A$
(extended to all of $D$ via Proposition~\ref{p.subdomainGFFdef})
and $h_1$ is an $\mathcal A$-measurable random distribution
(i.e., as a distribution-valued function on
the space of distribution-set pairs $(h, A)$, $h_1$ is $\mathcal A$-measurable)
which is a.s.\ harmonic on $D \setminus A$.
\item \label{i.twostep} A sample with the law of $(h,A)$ can be produced as
follows.
  First choose the pair $(h_1, A)$
according to some law where $h_1$ is almost surely harmonic on $D \setminus A$.  Then
sample an instance $h_2$ of the GFF on $D \setminus A$ and set $h=h_1+h_2$.
\end{enumerate}
\end{lemma}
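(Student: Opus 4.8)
The plan is to prove the four conditions equivalent through a cycle of implications that parallels the discrete Lemma~\ref{l.localiff}, with the orthogonal splitting $H(D)=\Supp_B\oplus\Harm_B$ and the independence supplied by Proposition~\ref{p.harmdist} playing the role of the discrete restrictions $h_B$ and $h_{B^c}$. Concretely I would show (\ref{i.Acond}) $\Leftrightarrow$ (\ref{i.twostep}), then (\ref{i.Acond}) $\Rightarrow$ (\ref{i.B2cond}) $\Rightarrow$ (\ref{i.Bcond}), and close the loop with the substantive implication (\ref{i.Bcond}) $\Rightarrow$ (\ref{i.Acond}).

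The routine directions come first. The step (\ref{i.B2cond}) $\Rightarrow$ (\ref{i.Bcond}) is immediate because $\{A\cap B=\emptyset\}=S^c$ is $(S,\tilde A)$-measurable. The equivalence (\ref{i.Acond}) $\Leftrightarrow$ (\ref{i.twostep}) is essentially a restatement: from (\ref{i.Acond}) one samples by first drawing the $\mathcal A$-measurable pair $(h_1,A)$ from its marginal and then adding an independent zero-boundary GFF $h_2$ on $D\setminus A$ (well defined on all of $D$ by Proposition~\ref{p.subdomainGFFdef}); conversely, in the two-step construction I would take $h_1=\mathbb{E}[h\mid\mathcal A]$ and verify that $h_2=h-h_1$ is still a fresh GFF after conditioning on $\mathcal A$. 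The point to check here is that, since $h_2$ has zero boundary values along $\partial(D\setminus A)$, the field data in the shrinking collar $A_\delta$ defining $\mathcal A_\delta$ converges as $\delta\downarrow 0$ to the boundary data of $h_1$ alone, so $\mathcal A$ records $(h_1,A)$ but none of the interior fluctuations of $h_2$. For (\ref{i.Acond}) $\Rightarrow$ (\ref{i.B2cond}) I would fix an open $B$, sample $(h_1,A)$ and then $h_2$: on $\{A\cap B=\emptyset\}$ one has $B\subset D\setminus A$, so $h_1$ is harmonic on $B$, the $\Supp_B$-projection of $h$ equals that of $h_2$, and by the Markov property of the GFF on $D\setminus A$ this projection is a zero-boundary GFF on $B$ independent of $(h_1,A)$, hence of both $(S,\tilde A)$ and the $\Harm_B$-projection; on the complementary event $\tilde A=\emptyset$, so $(S,\tilde A)$ adds no information, which yields the asserted conditional independence.

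The main obstacle is the closing implication (\ref{i.Bcond}) $\Rightarrow$ (\ref{i.Acond}), where the discrete induction on $|C|$ has no direct analogue because $A$ ranges over the uncountable space $\Gamma$. My plan is to discretize. Fix a fine mesh and let $A^\delta$ be the union of the closed mesh cells meeting $A_\delta$, so that $A^\delta\downarrow\overline A$. For a fixed mesh, the Markov property of the GFF (Proposition~\ref{p.harmdist}) endows the coarse-grained field indexed by cells with exactly the conditional structure to which the generality noted in the remark following Lemma~\ref{discretelocalunion} applies; condition (\ref{i.Bcond}), applied cell by cell, then lets me run the inductive argument of Lemma~\ref{l.localiff} over the finitely many cells and conclude that, conditioned on which cells $A$ meets together with the harmonic-complement projection of $h$ relative to the unmet region, the field on the union of the unmet cells is an independent zero-boundary GFF there. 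I would then refine the mesh and send $\delta\downarrow 0$, taking $h_1=\mathbb{E}[h\mid\mathcal A]$ and showing $h-h_1$ converges to a clean independent GFF on $D\setminus A$.

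The delicate points in this last step are measure-theoretic: compactness of $\Gamma$ gives tightness for free, but one must show the conditional laws converge and that the $\delta\downarrow 0$ limit commutes with the conditioning. The quantitative input controlling the thin collar $A^\delta\setminus A$ is the almost-independence estimate of Lemma~\ref{aind}, which guarantees that revealing $h$ near $A$ does not corrupt the field at a macroscopic distance. Finally, the harmonicity of $h_1$ off $A$ is the continuum counterpart of Proposition~\ref{discretelocalharmonic} and follows by testing $\mathbb{E}[h\mid\mathcal A]$ against $\density\in H_s(D\setminus A)$.
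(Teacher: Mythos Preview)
Your overall architecture matches the paper's almost exactly: the trivial implications and the grid discretization of $A$ for the hard direction (\ref{i.Bcond}) $\Rightarrow$ (\ref{i.Acond}) are precisely what the paper does. There are two places where your proposal diverges, and both are worth noting.

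First, the $\delta\downarrow 0$ limit. You anticipate delicate measure-theoretic issues and propose to control the collar via the almost-independence estimate of Lemma~\ref{aind}. The paper avoids all of this with a single observation: the $\sigma$-algebras $\widehat{\mathcal A}_\delta$ associated with the grid approximations are \emph{decreasing} in $\delta$, and by construction $\mathcal A=\bigcap_{\delta>0}\widehat{\mathcal A}_\delta$. The reverse martingale convergence theorem then gives $(\condexp_{\widehat A_\delta},\density)\to(\condexp_A,\density)$ almost surely for each fixed $\density$, and (\ref{i.Acond}) for $A$ follows from (\ref{i.Acond}) for each $\widehat A_\delta$. No tightness, no Lemma~\ref{aind}, no commuting of limits with conditioning. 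Your route through Lemma~\ref{aind} would be substantially harder to make precise, since that lemma only gives absolute continuity, not a quantitative bound that would drive a convergence argument.

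Second, you attempt a direct (\ref{i.twostep}) $\Rightarrow$ (\ref{i.Acond}) by arguing that the collar data ``converges to the boundary data of $h_1$ alone.'' This is heuristic and not obviously rigorous: it is not clear, a priori, that the $h_1$ given in (\ref{i.twostep}) is $\mathcal A$-measurable, nor that $\mathcal A$ sees none of $h_2$. The paper sidesteps this entirely by closing the cycle as (\ref{i.Acond}) $\Rightarrow$ (\ref{i.twostep}) $\Rightarrow$ (\ref{i.B2cond}) $\Rightarrow$ (\ref{i.Bcond}) $\Rightarrow$ (\ref{i.Acond}); the direction (\ref{i.twostep}) $\Rightarrow$ (\ref{i.B2cond}) is exactly your (\ref{i.Acond}) $\Rightarrow$ (\ref{i.B2cond}) argument, and avoids ever needing to show directly that (\ref{i.twostep}) produces an $\mathcal A$-measurable $h_1$.
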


Following the discrete definitions, we say a random closed set $A$
coupled with an instance $h$ of the GFF, is {\bf local} if
one of the equivalent items in Lemma \ref{l.localdefequivalence} holds.
For any coupling of $A$ and $h$, we use the notation $\condexp_A$ to describe
the conditional expectation of the distribution $h$ given
$\mathcal A$.  When $A$ is local, $\condexp_A$ is the $h_1$ described in item (\ref{i.Acond}) above.

\proof
Trivially, (\ref{i.B2cond}) implies (\ref{i.Bcond}).
Next, suppose $A$ satisfies (\ref{i.Bcond}).  We may assume that $D$ is bounded (applying a conformal map if necessary
to make this the case).  Fix $\delta$ and let $\widehat A_\delta$ denote the intersection of $D$ with
the union of all closed squares of the grid
$\delta \mathbb Z^2$ that intersect $A_\delta$.  Then we claim that
$\widehat A_\delta$ satisfies (\ref{i.Bcond}) as well for each deterministic choice of $\delta$.
This can be seen by replacing $B$ with $B' := D \setminus (D \setminus B)_\delta$, and noting that $A$ intersects $B'$
if and only if $A_\delta$ intersects $B'$.  Since $B'\subset B$, conditioning on $\Harm_{B'}$ is equivalent to conditioning on $\Harm_{B}$
and then conditioning on a function of $\Supp_B$ (and $\Supp_{B'}$ is also a function of $\Supp_B$), which proves the claim.

There are only finitely many possible choices for $\widehat A_\delta$, so the fact
that $\widehat A_\delta$ satisfies (\ref{i.Acond}) follows by the inductive argument used in the proof
of Lemma \ref{l.localiff}.

To be precise, we prove the statement ``If $\P[\widehat A_\delta =C]\ne 0$,
then conditioned on $\widehat A_\delta =C$ and
the projection $h_1$ of $h$ onto the space of functions harmonic off of $C$,
the law of $h$ is the law of a zero-boundary DGFF on $D \setminus C$ plus $h_1$''
by induction on the size of $C$. The statement is clearly true if $C$ is
empty.  For general $C$, we know from our assumption that conditioned on $h_C$
and $A \subset C$, the law of $h$ is the law of a DGFF with the given
heights on $C$.  By the inductive hypothesis, we know that if we condition
on $h_C$ and any particular choice for $A$ which is a proper subset of
$C$, we will have this same conditional law; it follows that we also
have this law if we condition on $h_C$ and $A=C$, provided that $\P[A=C]\ne0$.

Since $\mathcal A$ is the intersection of the $\widehat {\mathcal A}_\delta$ (defined analogously
to $\mathcal A_\delta$), for
$\delta > 0$, the reverse martingale convergence theorem implies the almost sure
convergence $\condexp_{\widehat A_\delta} \to \condexp_A$ as $\delta \to 0$ in the weak sense,
i.e., for each fixed $\density$, we have a.s.\
$$(\condexp_{\widehat A_\delta}, \density) \to (\condexp_A, \density).$$
This and the fact that (\ref{i.Acond}) holds for every $\widehat A_\delta$ implies
that it must hold for $A$ as well.  Since this holds for every fixed $\density$, we may extend to all
$\density \in H_s(D)$ and obtain (\ref{i.Acond}) by Proposition~\ref{p.GFFdef} and Proposition~\ref{p.subdomainGFFdef}.

Now (\ref{i.twostep}) is immediate from (\ref{i.Acond}) when we set $h_1 = \condexp_A$.
To obtain (\ref{i.B2cond}) from (\ref{i.twostep}), if suffices to show that given the
projection of $h$ onto $\Harm_B$ {\em and} the pair $(S, \tilde A)$,
the conditional law of the projection of $h$ onto $\Supp_B$ is the same as its
a priori law (or its law conditioned on only the
projection of $h$ onto $\Harm_B$), namely the law of the zero boundary GFF on
$B$.  To see this, we may first sample $A$ and $h_1$ and then---conditioned
on $A \cap B = \emptyset$---sample the projection of $h - h_1$ onto $\Supp_B$.
Since the law of $h-h_1$ is the GFF on $D \setminus A$ by assumption, this
projection is the GFF on $B$, as desired.
\qed

\begin{lemma} \label{continuumlocalunion}
Lemma \ref{discretelocalunion} applies in the continuum setting as
well.  That is, if $A_1$ and $A_2$ are local sets coupled with
the GFF $h$ on $D$, then their conditionally independent
union $A = A_1 \ccup A_2$ is also local.  The analog of the slightly
stronger statement in Lemma \ref{discretelocalunion} also holds:
given $\mathcal A$ {\em and} the pair $(A_1, A_2)$, the conditional law of $h$
is given by $\condexp_A$ plus an instance of the GFF on $D \setminus A$.
\end{lemma}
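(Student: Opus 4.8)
The plan is to transcribe the discrete proof of Lemma~\ref{discretelocalunion} into the continuum, replacing the discrete locality characterization of Lemma~\ref{l.localiff} by its continuum counterpart in Lemma~\ref{l.localdefequivalence}, and using the abstract three-$\sigma$-algebra statement of Lemma~\ref{l.independentpair} verbatim (its proof is purely measure-theoretic and does not care whether $h$ is discrete or continuum). Fix a deterministic open $B \subset D$ and work throughout conditionally on the projection of $h$ onto $\Harm_B$. Let $S_i$ be the event $A_i \cap B \neq \emptyset$, and let $\tilde A_i$ equal $A_i$ on $S_i^c$ and $\emptyset$ on $S_i$, exactly as in Lemma~\ref{l.localiff}. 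Because $A_1$ and $A_2$ are local, item~(\ref{i.B2cond}) of Lemma~\ref{l.localdefequivalence} gives that, conditioned on the projection onto $\Harm_B$, each pair $(S_i, \tilde A_i)$ is independent of the projection of $h$ onto $\Supp_B$.

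The remaining input is the conditional independence of the two pairs: by the very definition of the conditionally independent union $A_1 \ccup A_2$, the sets $A_1$ and $A_2$ are independent given $h$, i.e.\ given both projections, so in particular $(S_1, \tilde A_1)$ and $(S_2, \tilde A_2)$ are independent given the projection onto $\Supp_B$ (in addition to the standing conditioning on $\Harm_B$). Applying Lemma~\ref{l.independentpair} with $\mathcal A$ the $\sigma$-algebra generated by the projection onto $\Supp_B$, and $\mathcal B$, $\mathcal C$ generated by $(S_1, \tilde A_1)$ and $(S_2, \tilde A_2)$ respectively, I conclude that, given the projection onto $\Harm_B$, the projection onto $\Supp_B$ is independent of the whole tuple $(S_1, S_2, \tilde A_1, \tilde A_2)$. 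Specializing to the coarser event $S_1 \cup S_2 = \{A \cap B \neq \emptyset\}$ yields item~(\ref{i.Bcond}) of Lemma~\ref{l.localdefequivalence} for $A = A_1 \ccup A_2$, and hence the locality of $A$.

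For the stronger statement I would exploit the fact that Lemma~\ref{l.independentpair} delivers independence from the \emph{full} tuple, not merely from the union event. On the event $A \cap B = \emptyset$ we have $\tilde A_i = A_i$, so $(S_1, S_2, \tilde A_1, \tilde A_2)$ determines the pair $(A_1, A_2)$; consequently, conditioned on the projection onto $\Harm_B$ and on $(A_1, A_2)$, the projection onto $\Supp_B$ still has the law of the zero-boundary GFF on $B$. This is exactly the assertion that, for each fixed $B$ disjoint from $A$, adding the data of $(A_1, A_2)$ does not perturb the conditional GFF off $B$. To upgrade this to the global claim I would run the grid approximation and reverse-martingale convergence argument already used in the proof of Lemma~\ref{l.localdefequivalence}: approximate $A$ by $\widehat A_\delta$, note the per-$B$ statement for the complement of $\widehat A_\delta$, and pass to the limit $\delta \to 0$ so that $\condexp_{\widehat A_\delta} \to \condexp_A$ and the conditional field off $\widehat A_\delta$ converges to the GFF on $D \setminus A$ (interpreted via Proposition~\ref{p.subdomainGFFdef}). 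The main obstacle is this last step: I must check that the additional conditioning on $(A_1, A_2)$ is compatible with the reverse-martingale limit and that the surviving harmonic part is measurable with respect to $\mathcal A$ alone, so that it equals $\condexp_A$. Everything else is a faithful translation of the discrete argument, with the projections onto $\Harm_B$ and $\Supp_B$ playing the roles of $h_{B^c}$ and $h_B$.
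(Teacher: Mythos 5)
Your proposal is correct and follows essentially the same route as the paper: it invokes characterization (\ref{i.B2cond}) of Lemma \ref{l.localdefequivalence} for each $A_i$, applies Lemma \ref{l.independentpair} (with exactly the same assignment of $\sigma$-algebras) to get independence of the $\Supp_B$-projection from the full quadruple $(S_1,\tilde A_1,S_2,\tilde A_2)$ given the $\Harm_B$-projection, and then upgrades to the (\ref{i.Acond})-type statement by the same grid-approximation and reverse-martingale argument used in Lemma \ref{l.localdefequivalence}. You are in fact somewhat more explicit than the paper in verifying the hypotheses of Lemma \ref{l.independentpair} and in flagging the final limiting step, which the paper dispatches with ``essentially as the discrete case.''
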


\proof The proof is essentially identical to the discrete case.  We
use characterization (\ref{i.B2cond}) for locality as given
in Lemma \ref{l.localdefequivalence} and observe that Lemma
\ref{l.independentpair} implies the analogous result holds for
the quadruple $(S_1, \tilde A_1, S_2, \tilde A_2)$---namely, that for each deterministic open $B \subset D$, we
have that {\em given} the projection of $h$ onto $\Harm_B$ and the
quadruple $(S_1, \tilde A_1, S_2, \tilde A_2)$, 
the conditional law
of the projection of $h$ onto $\Supp_B$ (assuming $A \cap B = \emptyset$)
is the law of the GFF on $B$.
 The proof that this analog of (\ref{i.B2cond})
implies the corresponding analog of (\ref{i.Acond}) in the statement of
Lemma \ref{continuumlocalunion} is also essentially as the discrete case.
\qed

\begin{lemma} \label{doublelocal}
Let $A_1$ and $A_2$ be connected local sets.  Then $\condexp_{A_1 \ccup A_2} -
\condexp_{A_2}$ is almost surely a harmonic function in
$D \setminus (A_1 \ccup A_2)$ that
tends to zero on all sequences of points in $D \setminus (A_1 \ccup A_2)$
that tend to a limit in $A_2\setminus A_1$ (unless $A_2$
is a single point).
\end{lemma}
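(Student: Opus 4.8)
The plan is to prove the two assertions---harmonicity and the boundary behavior---separately, the second by recognizing the difference as the conditional-expectation field of a fresh zero-boundary GFF on $D\setminus A_2$ relative to the local set $A_1$.

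First I would dispose of harmonicity. By characterization (\ref{i.Acond}) of Lemma~\ref{l.localdefequivalence}, $\condexp_{A_1\ccup A_2}$ is a.s.\ harmonic on $D\setminus(A_1\ccup A_2)$, while $\condexp_{A_2}$ is a.s.\ harmonic on $D\setminus A_2\supseteq D\setminus(A_1\ccup A_2)$; hence the difference $\phi:=\condexp_{A_1\ccup A_2}-\condexp_{A_2}$ is a.s.\ harmonic on $D\setminus(A_1\ccup A_2)$. Everything then reduces to controlling $\phi$ near $A_2\setminus A_1$.

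Next I would set up the conditional picture. Write $\mathcal A_{A_2}$ and $\mathcal A_{A_1\ccup A_2}$ for the $\sigma$-algebras $\mathcal A$ (as defined just before Lemma~\ref{l.localdefequivalence}) attached to $A_2$ and to $A_1\ccup A_2$, and set $\mathcal G:=\sigma(\mathcal A_{A_1\ccup A_2},A_1,A_2)$. By Lemma~\ref{continuumlocalunion}, conditioned on $\mathcal G$ the law of $h$ is $\condexp_{A_1\ccup A_2}$ plus a GFF on $D\setminus(A_1\ccup A_2)$, so $\E[h\mid\mathcal G]=\condexp_{A_1\ccup A_2}$. Since the infinitesimal neighborhood of $A_2$ lies inside that of $A_1\ccup A_2$, one checks $\mathcal A_{A_2}\subseteq\mathcal G$, and the tower property then gives
$$\condexp_{A_2}=\E[h\mid\mathcal A_{A_2}]=\E\big[\E[h\mid\mathcal G]\mid\mathcal A_{A_2}\big]=\E[\condexp_{A_1\ccup A_2}\mid\mathcal A_{A_2}].$$
Now I would work conditionally on $\mathcal A_{A_2}$: by locality of $A_2$ we may write $h=\condexp_{A_2}+h_2$ with $h_2$ a zero-boundary GFF on $\Omega:=D\setminus A_2$, and since $\condexp_{A_2}$ is $\mathcal A_{A_2}$-measurable, $\phi=\E[h-\condexp_{A_2}\mid\mathcal G]=\E[h_2\mid\mathcal G]$. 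Using the conditionally-independent-union structure (Lemma~\ref{continuumlocalunion}), $A_1$ is a local set for $h_2$ on $\Omega$, and modulo $\mathcal A_{A_2}$ the $\sigma$-algebra $\mathcal G$ is generated precisely by $A_1$ together with $h_2$ near $A_1$, i.e.\ by the local-set $\sigma$-algebra of $A_1$ for $h_2$; hence $\phi=\condexp^{h_2}_{A_1}$, the conditional-expectation field of the GFF $h_2$ on $\Omega$ given the local set $A_1$.

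Finally I would read off the boundary behavior from this identification. As the harmonic part of the \emph{zero-boundary} GFF $h_2$ on $\Omega$ relative to $A_1$, the function $\condexp^{h_2}_{A_1}$ is harmonic on $\Omega\setminus A_1$ and inherits the vanishing boundary data of $h_2$ along $\partial\Omega\setminus A_1$. A point $z_0\in A_2\setminus A_1$ lies in $\partial\Omega$ with $\dist(z_0,A_1)>0$, and because $A_2$ is connected and not a single point, $z_0$ is a regular boundary point of $\Omega$. Choosing a ball $B(z_0,r)$ with $\overline{B(z_0,r)}\cap A_1=\emptyset$, on which $\condexp^{h_2}_{A_1}$ is harmonic, locally bounded, and has vanishing boundary values along $A_2\cap B(z_0,r)$, a standard harmonic-measure (barrier) estimate at the regular point $z_0$ forces $\phi(z)=\condexp^{h_2}_{A_1}(z)\to0$ as $z\to z_0$ within $D\setminus(A_1\ccup A_2)$.

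The main obstacle is this last step: justifying rigorously that $\condexp^{h_2}_{A_1}$ genuinely carries the zero boundary values of $h_2$ along $A_2$ near $z_0$ and is locally bounded there, so that the harmonic-measure estimate applies. The hypothesis that $A_2$ is connected and not a single point enters exactly here, since it is what makes every $z_0\in A_2\setminus A_1$ a regular boundary point of $\Omega$; when $A_2$ is a single point the point is a removable puncture of $\Omega$ that harmonic functions do not see, so $\phi$ need not vanish there---which is precisely the stated exception.
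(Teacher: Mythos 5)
Your reduction runs parallel to the paper's: condition on $\mathcal A_{A_2}$, write $h=\condexp_{A_2}+h_2$ with $h_2$ a zero-boundary GFF on $D\setminus A_2$, identify $\condexp_{A_1\ccup A_2}-\condexp_{A_2}$ with the conditional-expectation field of $h_2$ relative to the local set $\closure{A_1\setminus A_2}$ (this step leans on the stronger form of Lemma \ref{continuumlocalunion}, which conditions on the pair $(A_1,A_2)$ as well), and thereby reduce to the claim that for a local set $A$ in a domain, $\condexp_A$ tends to zero at boundary points lying on a non-singleton connected component of the boundary away from $A$. Up to that point the argument is sound and matches the paper's strategy.

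The gap is in the final step, which is where the real content of the lemma lives. You assert that $\condexp^{h_2}_{A_1}$ ``inherits the vanishing boundary data of $h_2$'' and is locally bounded near $z_0$, and then invoke a barrier estimate. But $h_2$ is only a distribution, so it has no pointwise boundary values to inherit, and $\condexp^{h_2}_{A_1}$ is a priori just a random distribution that happens to be harmonic off the local set; nothing in the definition of a local set rules out this harmonic function blowing up, or tending to a nonzero limit, as one approaches $\p(D\setminus A_2)$. Local boundedness and boundary vanishing are exactly the premises that need to be established before any harmonic-measure estimate can be applied, and you concede in your last paragraph that you do not establish them. The paper closes this gap by a soft absolute-continuity argument rather than a barrier: fixing a neighborhood $B_1$ of the boundary point and a set $B_2$ at positive distance from it, on the event $A\subset B_2$ Lemma \ref{aind} makes the field near $B_1$ almost independent of the data determining $\condexp_A$, while Lemma \ref{singularGFF} shows that adding a nonvanishing deterministic harmonic function to the GFF yields a mutually singular law; together these force $\condexp_A$ to vanish at the boundary point, and varying $B_1,B_2$ finishes the proof. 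Your explanation of why the single-point exception occurs (non-polarity of a nondegenerate continuum, hence regularity of $z_0$) is correct and plays the same role as in the paper, but without the singularity/almost-independence input the proof is incomplete.
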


\proof By Lemma \ref{continuumlocalunion}, the union $A_1 \ccup A_2$ is
itself a local set, so $\condexp_{A_1 \ccup A_2}$ is well defined.
Now, conditioned on $\mathcal A_1$ the law of the field in $D
\setminus A_1$ is given by a GFF in $D \setminus A_1$ plus
$\condexp_{A_1}$.  We next claim that $\closure{A_2
\setminus A_1}$ is a local subset of $D \setminus A_1$, with
respect to this GFF on $D \setminus A_1$.
To see this, note that characterization (\ref{i.Bcond}) for locality from
Lemma \ref{l.localdefequivalence}
follows from the latter statement in Lemma \ref{continuumlocalunion}.

By replacing $D$ with $D \setminus A_1$ and subtracting $\condexp_{A_1}$,
we may thus reduce to the case that $A_1$ is deterministically empty and
$\condexp_{A_1} = 0$.  What remains to show is that if $A$ is any local
set on $D$ then $\condexp_A$ (when viewed as a harmonic function on $D \setminus A$)
tends to zero almost surely along all sequences of points in $D \setminus A$
that approach a point $x$ that lies on a connected components of $\partial D \setminus A$ that consists
of more than a single point.

If we fix a neighborhood $B_1$ of $x$ and another neighborhood $B_2$ whose distance from $B_1$ is
positive, then the fact that the statement holds on the event $A \subset B_2$ is immediately from
Lemma \ref{singularGFF} and Lemma \ref{aind}. Since this holds for arbitrary $B_1$ and $B_2$,
the result follows. \qed

To conclude, we note the following is immediate from the definition of local and
Theorem \ref{uniquecontinuumpath}.

\begin{lemma} \label{l.gammaislocal}
In the coupling between $h$ and $\gamma$ of Theorem \ref{uniquecontinuumpath},
the set $\gamma([0,T])$ is local for every $T \geq 0$.  The same is true if $T$ is
a non-deterministic stopping time of the process $W_t$.
\end{lemma}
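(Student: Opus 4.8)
The plan is to check directly that $A := \gamma([0,T]) = \tilde\gamma([0,T])$ verifies one of the equivalent conditions for locality in Lemma~\ref{l.localdefequivalence}; the most convenient is the generative characterization (\ref{i.twostep}), which only asks for the existence of a way to sample $(\tilde h, A)$ by first choosing a pair $(h_1, A)$ with $h_1$ harmonic off $A$ and then adding an independent zero-boundary GFF on $\H \setminus A$. First I would record that $A = \gamma([0,T])$ is the continuous image of the compact interval $[0,T]$, hence $d_*$-compact and closed, and contains $\gamma(0)=0$, so it is a bona fide element of $\Gamma$. Since the driving process is $W_t = 2B_t$, i.e.\ $\kappa = 4$, the trace is simple and no point of $\H$ is ever swallowed, so the hull $\tilde K_T$ coincides with $A$; thus $\H \setminus A = \H \setminus \tilde K_T$ is a subdomain of $\H$ to which Proposition~\ref{p.subdomainGFFdef} applies.

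The key observation is that the conformal Markov property defining the coupling is essentially a restatement of (\ref{i.twostep}). The harmonic function $h_1 := \tilde h_T$ of (\ref{e.ht}) and the set $A = \tilde K_T$ are both deterministic functions of the restriction $\tilde W|_{[0,T]}$, and the hypothesis of Theorem~\ref{uniquecontinuumpath} asserts that, conditionally on $\tilde W|_{[0,T]}$, the field $\tilde h$ equals $h_1$ plus a zero-boundary GFF on $\H \setminus A$. This conditional law depends on $\tilde W|_{[0,T]}$ only through the pair $(h_1, A)$, so by the tower property it is also the conditional law of $\tilde h$ given $(h_1, A)$ alone. Hence one may sample $(\tilde h, A)$ by first drawing $(h_1, A)$ from its marginal law --- under which $h_1 = \tilde h_T$ is harmonic on $\H \setminus A$ --- and then adding an independent zero-boundary GFF, which is exactly (\ref{i.twostep}). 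In the coupling at hand (the one built in Lemma~\ref{contourplusGFF}) this decomposition holds as an identity of distributions tested against \emph{every} $\density \in H_s(\H)$, and not merely against test functions supported off the null set $A$, so no boundary issue arises, and $A$ is local.

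For a stopping time $T$ of $W_t$ I would run the identical argument after upgrading the conformal Markov property from fixed times to stopping times. This upgrade follows from the strong Markov property of $W_t = 2B_t$ together with optional stopping applied to the continuous martingales $(h_t, \density)$ of Lemma~\ref{multipointheight} --- precisely the extension already recorded in the proof of Lemma~\ref{contourplusGFF} (``a similar argument applies if we replace $h_0$ with $h_t$ for any stopping time $t$ of $W_t$''). Granting it, conditionally on the stopped filtration, which determines both $\tilde h_T$ and $\gamma([0,T])$, the field $\tilde h$ again equals $\tilde h_T$ plus a conditionally independent zero-boundary GFF on $\H \setminus \gamma([0,T])$, and (\ref{i.twostep}) gives locality as before.

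The main obstacle I anticipate is exactly this stopping-time step: making rigorous that the fixed-time decomposition passes to stopping times as an equality of random distributions, and not just of finitely many linear functionals. The cleanest route is to reduce to the fixed-time case by approximating $T$ from above by stopping times valued in a countable discrete set, applying the fixed-$T$ decomposition on each atom, and passing to the limit using continuity of the trace and of the martingales $(h_t, \density)$; alternatively one simply invokes the stopping-time form of Lemma~\ref{contourplusGFF} established there. Everything else --- membership of $A$ in $\Gamma$, harmonicity of $h_1 = \tilde h_T$ off $A$, and the identification of conditioning $\sigma$-algebras via the tower property --- is routine.
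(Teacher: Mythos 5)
Your proposal is correct and follows essentially the same route as the paper, which simply observes that the lemma is immediate from the definition of local (characterization~(\ref{i.twostep}) of Lemma~\ref{l.localdefequivalence}) together with the conformal Markov property of Theorem~\ref{uniquecontinuumpath}. Your unpacking --- that $(\tilde h_T, \tilde K_T)$ is a function of $\tilde W|_{[0,T]}$, that $\tilde K_T = \gamma([0,T])$ for $\kappa=4$, and that the stopping-time case rests on the extension already noted in the proof of Lemma~\ref{contourplusGFF} --- is exactly the intended argument.
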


\section{Fine grid preliminaries}
\subsection{Subspaces are asymptotically dense} \label{ss.densesubspaces}

As $\rr \rightarrow \infty$, the subspaces $\{g\circ\phi_D^{-1}:g\in H_{\TG}(D)\}$,
 where $\phi_D$ and $\rr$ are as defined in Section \ref{ss.GFFapprox},
become asymptotically
dense in $H(\H)$ in the following sense.  When $D$ is a $\TG$-domain and $g \in H_s(D)$,
let $P_D(g)$ denote the orthogonal projection of $g$ (with respect to
the inner product $(\cdot,\cdot)_\nabla$)
onto the space of continuous functions which are affine on each triangle of $\TG$.
For $f\in H(\H)$ set $f_D:=f\circ \phi_D$.


\begin{lemma}\label{projectiveconvergence}
Let $D\subset\C$ denote a $\TG$-domain, and assume the notation above. For each $f \in
H(\H)$, the values
$\bigl\|P_D(f_D)\circ\phi_D^{-1}-f\bigr\|_\nabla=\|P_D(f_D) - f_D\|_\nabla$ tend to zero as $\rr
\rightarrow \infty$.  In fact, if $f \in H_s(\H)$, then $\|P_D(f_D) -
f_D\|_\nabla = O(\frac{1}{\rr})$, where the implied constant may
depend on $f$.
\end{lemma}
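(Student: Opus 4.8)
The plan is to exploit that $P_D$ is an orthogonal projection, so that $\|P_D(f_D)-f_D\|_\nabla$ is exactly the $(\cdot,\cdot)_\nabla$-distance from $f_D$ to the subspace $H_{\TG}(D)$; hence it suffices to exhibit \emph{one} good approximant $g\in H_{\TG}(D)$ and bound $\|g-f_D\|_\nabla$. First I would reduce the qualitative statement to the quantitative one: since $H_s(\H)$ is $\|\cdot\|_\nabla$-dense in $H(\H)$, since $\|(f-f')_D\|_\nabla=\|f-f'\|_\nabla$ by conformal invariance, and since $P_D$ has operator norm at most $1$, for any $f'\in H_s(\H)$ one has $\|P_D(f_D)-f_D\|_\nabla\le 2\|f-f'\|_\nabla+\|P_D(f'_D)-f'_D\|_\nabla$; choosing $f'$ close to $f$ and invoking the $O(1/\rr)$ bound for the smooth case then yields the first assertion. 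So the real content is the bound $\|P_D(f_D)-f_D\|_\nabla=O(1/\rr)$ for $f\in H_s(\H)$.

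For the smooth case, take $g$ to be the nodal interpolant of $f_D$: the unique element of $H_{\TG}(D)$ agreeing with $f_D$ at every vertex of $\TG$ in $\closure D$. Because $\mathrm{supp}\,f$ is a fixed compact subset of $\H$ at positive distance from $\partial\H$, its preimage $\phi_D^{-1}(\mathrm{supp}\,f)$ lies, for $\rr$ large, well inside $D$, at a distance from $\partial D$ far exceeding the unit grid spacing; thus $f_D$ vanishes at every vertex near $\partial D$ and $g$ is a legitimate member of $H(D)=H^1_0(D)$. The classical finite element interpolation estimate on each (equilateral, hence bounded-aspect-ratio) triangle $\tau$ of unit diameter gives $\|\nabla(f_D-g)\|_{L^2(\tau)}\le C\,|f_D|_{H^2(\tau)}$; summing the squares over all triangles yields
\begin{equation*}
\|g-f_D\|_\nabla^2\le C^2\int_D |D^2 f_D|^2\,dx,
\end{equation*}
where $D^2$ denotes the Hessian. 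It therefore remains to show $\int_D|D^2 f_D|^2=O(1/\rr^2)$.

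This last step is a change of variables combined with conformal distortion. Writing $\phi=\phi_D$ (holomorphic) and differentiating $f_D=f\circ\phi$ twice, the Hessian of $f_D$ is controlled by $|D^2 f(\phi)|\,|\phi'|^2+|\nabla f(\phi)|\,|\phi''|$; substituting $w=\phi(z)$, so $dw=|\phi'|^2\,dz$, gives
\begin{equation*}
\int_D|D^2 f_D|^2\,dz\le C\int_{\mathrm{supp}\,f}\Bigl(|D^2 f|^2\,|\phi'|^2+|\nabla f|^2\,\bigl|\phi''/\phi'\bigr|^2\Bigr)\,dw,
\end{equation*}
with $\phi',\phi''$ evaluated at $\phi^{-1}(w)$. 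The remaining factors are estimated by the Koebe distortion theorem applied to $\psi:=\phi^{-1}:\H\to D$: since the inradius of $D$ from $\psi(i)=\phi_D^{-1}(i)$ is $\rr$, the conformal radius is comparable to $\rr$, whence $|\psi'(i)|\asymp\rr$, and the distortion theorems extend this to $|\psi'(w)|\asymp\rr$ uniformly on the compact (bounded hyperbolic diameter) set $\mathrm{supp}\,f$. Hence $|\phi'(\phi^{-1}(w))|=1/|\psi'(w)|=O(1/\rr)$. Using the identity $(\phi''/\phi')\circ\psi=-\psi''/(\psi')^2$ together with the standard pre-Schwarzian bound $|\psi''/\psi'|=O(1)$ on compact hyperbolic sets, one gets $|\phi''/\phi'|=|\psi''|/|\psi'|^2=O(1/\rr)$ there as well. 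Both contributions are thus $O(1/\rr^2)$ times finite $f$-dependent constants ($\int|D^2 f|^2$ and $\int|\nabla f|^2$), completing the estimate.

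The main obstacle is this third step: making the distortion bounds $|\phi_D'|\asymp 1/\rr$ and $|\phi_D''/\phi_D'|=O(1/\rr)$ \emph{uniform} over all $\TG$-domains $D$ of a given $\rr$, and verifying that the implied constants depend only on the fixed compact $\mathrm{supp}\,f$ (through its hyperbolic diameter in $\H$) and not on $D$. The finite element estimate and the density reduction are routine; the care lies entirely in the conformal geometry and in confirming that, for $\rr$ large, the support of $f_D$ is separated from $\partial D$ by many grid cells, so that the nodal interpolant indeed lies in $H^1_0(D)$.
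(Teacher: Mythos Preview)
Your proposal is correct and follows essentially the same route as the paper: reduce to $f\in H_s(\H)$ by density, bound the projection error by the error of the nodal interpolant $g$, control the interpolation error on each unit triangle by the second derivatives of $f_D=f\circ\phi_D$, and use Koebe-type distortion bounds (with the implied constants depending only on the fixed compact $\mathrm{supp}\,f$) to estimate those derivatives. The only cosmetic difference is that the paper takes the $L^\infty$ route---bounding $\|\nabla g-\nabla f_D\|_\infty=O(\rr^{-2})$ pointwise and multiplying by the support area $O(\rr^{2})$---whereas you integrate in $L^2$ after a change of variables; both arrive at $\|g-f_D\|_\nabla^2=O(\rr^{-2})$ via the same distortion inputs $|\phi_D'|\asymp 1/\rr$ and $|\phi_D''|=O(1/\rr^{2})$.
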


\proof Since $H_s(\H)$ is dense in $H(\H)$, the former statement
follows from the latter.  Suppose that $f \in H_s(\H)$.  Then it
is supported on a compact subset $K$ of $\H$.

When $z$ ranges over values in $K$ and $\phi$ ranges over {\em
all} conformal functions that map a subdomain of $\C$ onto $\H$,
standard distortion theorems for conformal functions (e.g.,
Proposition 1.2 and Corollary 1.4 of \cite{\PommeBDRY}) imply the
following (where the implied constants may depend on $K$):

\begin{enumerate}
\item The ratio of $|(\phi^{-1})'(i)|$ and $\rr :=
\inr{\phi^{-1}(i)}(\phi^{-1}\H)$ is bounded between two
positive constants.
\item $|(\phi^{-1})'(z)| = O(|(\phi^{-1})'(i)|) = O(\rr)$.
\item $\diam (\phi^{-1}(K)) = O(|(\phi^{-1})'(i)|)=O(\rr)$.
\item $|\phi'(\phi^{-1}(z))| = O(\frac{1}{\rr})$.
\item $|\phi''(\phi^{-1}(z))| = O\bigl(\frac{1}{(\phi^{-1})'(z)^2}\bigr) = O\bigl(\rr^{-2}\bigr)$.
\end{enumerate}

Now $\|P_D(f_D) - f_D\|_\nabla = \inf\{ \|g - f_D\|_\nabla:g\in H_{\TG}(D)\}$.
We will bound the
latter by considering the case that $g$ is the
function $g_D\in H_{\TG}(D)$ that agrees with $f_D$ on
$\TG$ --- and then applying the above bounds with $\phi = \phi_D$.

Since the triangles of $\TG$ contained in $D$ have side length
one, the value $|\nabla g_D - \nabla f_D|$
on a triangle is bounded by a
constant times the maximal norm of the second derivative matrix of $f_D=f
\circ \phi_D$ in that triangle (where the latter is viewed as a function from
$\mathbb R^2$ to $\mathbb R$). If $f$ and $\phi_D$ were both
functions from $\mathbb R$ to $\mathbb R$, then the chain rule
would give $$(f \circ \phi_D)''(z) = [f'(\phi_D(z))\phi_D'(z)]' =
f''(\phi_D(z))\phi_D'(z)^2 + f'(\phi_D(z))\phi_D''(z).$$ In our
case, when we view $f$ as a function from $\mathbb R^2$ to
$\mathbb R$ and $\phi_D'$ as a function from $\mathbb R^2$ to
$\mathbb R^2$, the chain rule yields the formulas: but now $f'$ at
a point is understood to be a linear map from $\mathbb R^2$ to
$\mathbb R$, and $\phi_D'$ at a point is understood to be a linear
map from $\mathbb R^2$ to $\mathbb R^2$, etc.

Since all components of $f'$ and $f''$ are bounded on $K$, the
distortion bounds above give $$|(f \circ \phi_D)''(z)| =
O(|\phi_D'(z)^2 + \phi_D''(z)|) = O\bigl(\rr^{-2}\bigr)$$ and hence
$$\|\nabla g_D - \nabla f _D\|_\infty^2 =
O\bigl(\rr^{-4}\bigr).$$ The area of the support of $f \circ \phi_D$
is $O([\diam \phi_D^{-1}(K)]^2) = O(\rr^2)$. Thus $\|g_D - f\circ
\phi_D\|^2_\nabla = O(\rr^2/\rr^4)= O(\rr^{-2})$. \QED

\subsection{Topological and measure theoretic preliminaries}

In this section we assemble several simple topological facts that
will play a role in the proof of Theorem \ref{continuumcontourtheorem}.
Up to this point, we have treated the space $\Omega = \Omega_D$ of distributions on
a planar domain $D$ as a measure space, using $\mathcal F$ to represent
the smallest $\sigma$-algebra that makes $(\cdot, \density)$ measurable for
each fixed $\density \in H_s(D)$.  We have not yet explicitly introduced a metric
on $\Omega$.  (When we discussed convergence of distributions, we implicitly
used the weak topology---i.e., the topology in which $h_i \to h$ if and only if $(h_i, \density)
\to (h, \density)$ for all $\density \in H_s(D)$.)
Although it does not play a role in our main theorem statements, the following
lemma will be useful in the proofs.  Recall that a topological space is called {\bf $\sigma$-compact}
if it is the union of countably many compact sets:

\begin{lemma}\label{l.nicemetric}
Let $D$ be a simply connected domain.  There exists a metric $\widehat d$ on a subspace
$\widehat \Omega \subset \Omega$ with $\widehat \Omega \in \mathcal F$ such that \begin{enumerate}
\item An instance of the GFF on $D$ lies in $\widehat \Omega$ almost surely.
\item The topology induced by $\widehat d$ on $\widehat \Omega$ is $\sigma$-compact.
\item The Borel $\sigma$-algebra on $\widehat \Omega$ induced by $\widehat d$ is
the set of subsets of $\widehat \Omega$ that lie in $\mathcal F$.
\end{enumerate}
\end{lemma}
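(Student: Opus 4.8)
The plan is to realise $\Omega$, up to a set of full measure, inside a negative-order Sobolev space and to exploit the compactness of Sobolev embeddings. By the conformal invariance of the GFF (Section~\ref{ss.notation}), I would first reduce to $D=\U$: a Riemann map $\phi:D\to\U$ induces the bimeasurable bijection $h\mapsto h\circ\phi^{-1}$ of $\Omega_D$ onto $\Omega_\U$ carrying the GFF to the GFF, so it is enough to build $\widehat d$ on $\Omega_\U$ and pull it back. On the disc, fix the Dirichlet eigenbasis $\{e_j\}$ of $-\Delta$, with eigenvalues $0<\lambda_1\le\lambda_2\le\cdots$ obeying Weyl's law $\lambda_j\asymp j$, normalised so that $\{e_j\}$ is orthonormal in $L^2$ and $\{e_j/\sqrt{\lambda_j}\}$ is orthonormal in $H(\U)$. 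For $s<0$ set $\|h\|_s^2:=\sum_j\lambda_j^{s}\,(h,e_j)^2$, where $(h,e_j)$ is the $L^2$ pairing; since each $e_j$ is smooth, $(h,e_j)$ is a limit of test-function pairings $(h,\psi)$ with $\psi\to e_j$ in $H_s(\U)$, so $\|\cdot\|_{-\epsilon}$ is an $\mathcal F$-measurable $[0,\infty]$-valued function. Writing $h=\sum_j\alpha_j e_j/\sqrt{\lambda_j}$ with $\alpha_j$ i.i.d.\ standard Gaussian gives $(h,e_j)=\alpha_j/\sqrt{\lambda_j}$ and hence $\E\|h\|_s^2=\sum_j\lambda_j^{s-1}<\infty$ for every $s<0$. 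Fixing $0<\epsilon<\epsilon'$, I would take $\widehat\Omega:=\{h\in\Omega:\|h\|_{-\epsilon}<\infty\}\in\mathcal F$ and $\widehat d(h,h'):=\|h-h'\|_{-\epsilon'}$; then the GFF lies in $\widehat\Omega$ almost surely, which is item (1).

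For item (2), write $\widehat\Omega=\bigcup_{m}\widehat\Omega_m$ with $\widehat\Omega_m:=\{h:\|h\|_{-\epsilon}\le m\}$. Each $\widehat\Omega_m$ is $\widehat d$-compact: this is the Rellich compactness of $H^{-\epsilon}\hookrightarrow H^{-\epsilon'}$, which in eigencoordinates is the compactness of the inclusion of the weighted ball $\{\sum_j\lambda_j^{-\epsilon}c_j^2\le m^2\}$ into $\ell^2$ with weights $\lambda_j^{-\epsilon'}$, valid because $\lambda_j^{\epsilon'-\epsilon}\to\infty$. A diagonal extraction followed by a uniform tail estimate produces $\widehat d$-convergent subsequences whose limits still satisfy $\|\cdot\|_{-\epsilon}\le m$; in particular those limits lie in $H^{-\epsilon}\subset\Omega$, so each $\widehat\Omega_m$ is an honest compact subset of the distribution space, and $\widehat\Omega$ is $\sigma$-compact.

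Item (3) then follows from (2). Fix a countable family $\{\psi_k\}\subset H_s(\U)$ such that every test function is an $H_s(\U)$-limit of a sequence drawn from it (such a family exists by exhausting $\U$ by compact sets); since every distribution is a continuous functional, $\mathcal F=\sigma\big((\,\cdot\,,\psi_k):k\in\N\big)$. The map $\Phi:\widehat\Omega\to\R^{\N}$, $\Phi(h)=\big((h,\psi_k)\big)_k$, is injective (the $\psi_k$ separate distributions) and $\widehat d$-continuous (each coordinate is $\widehat d$-Lipschitz, as $|(h,\psi_k)|\le\|h\|_{-\epsilon'}\|\psi_k\|_{\epsilon'}$). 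On each compact piece $\widehat\Omega_m$ a continuous injection into a Hausdorff space is a homeomorphism onto its image, so $\mathcal B(\widehat\Omega_m,\widehat d)$ equals the $\Phi$-pullback of the product Borel $\sigma$-algebra, i.e.\ $\sigma((\,\cdot\,,\psi_k))$ restricted to $\widehat\Omega_m$. Taking the union over $m\in\N$ gives $\mathcal B(\widehat\Omega,\widehat d)=\mathcal F|_{\widehat\Omega}$, which is item (3).

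The step I expect to be the main obstacle is obtaining genuine compactness of the pieces $\widehat\Omega_m$ inside $\Omega$, rather than mere precompactness. If one instead tried to carve out $\widehat\Omega$ using only the countably many pairings $(h,\psi_k)$, the $\widehat d$-limits of distributions could fail to be distributions at all; the role of a true distributional norm, and of the Rellich embedding $H^{-\epsilon}\hookrightarrow H^{-\epsilon'}$ in particular, is exactly to guarantee that limit points remain in $H^{-\epsilon}\subset\Omega$. Once this compactness is in hand, both the almost sure membership of the GFF and the identification of the Borel structures are routine.
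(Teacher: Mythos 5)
Your argument is essentially the paper's own proof: both reduce to the unit disc by conformal invariance, take $\widehat\Omega$ to be a negative-order Sobolev-type space built from the Dirichlet eigenbasis (your $\|\cdot\|_{-\epsilon}$ is the paper's $(-\Delta)^aL^2(D)$ norm with $a=\epsilon/2$), metrize it with a strictly weaker norm of the same family, and get $\sigma$-compactness from Weyl asymptotics via the compact embedding and item (3) from comparing the two $\sigma$-algebras on the compact pieces. The proposal is correct; it just spells out the Rellich compactness and the Borel-structure identification in a little more detail than the paper does.
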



\proof It is enough to prove Lemma \ref{l.nicemetric} for a single bounded simply connected domain $D$ (say the unit
disc), since pulling back the metric $\widehat D$ via a conformal map preserves the properties
claimed in the Lemma.
When $f_i$ is an eigenvalue of the Laplacian with negative eigenvalue $\lambda$,
then we may define $(-\Delta)^a f_i = (-\lambda)^a f_i$, and we may extend this definition
linearly to the linear span of the $f_i$.
Denote by $(-\Delta)^a L^2(D)$ the Hilbert space closure of the linear span of the eigenfunctions
of the Laplacian on $D$ (that vanish on $\p D$) under the inner product $(f, g)_a := ( (-\Delta)^{-a} f,
(-\Delta)^{-a} g)$.
In other words, $(-\Delta)^a L^2(D)$ consists of those $f$ for which
$(-\Delta)^{-a} f \in L^2(D)$.  It follows immediately from Weyl's formula for bounded domains
that $(-\Delta)^a L^2(D) \subset (-\Delta)^b L^2(D)$
when $a < b$, that each of these spaces is naturally a subset of $\Omega$, and
that when $a>0$, an instance of the GFF almost surely lies in $(-\Delta)^a L^2(D)$.
(See \cite{\GFFSurvey} for details.)
We can thus take $\widehat \Omega = (-\Delta)^a L^2(D)$ for some $a > 0$
and let $\widehat d$ be the Hilbert space metric corresponding to $(-\Delta)^b L^2(D)$ for
some $b > a$.

To see that the topology induced by $\widehat d$ on $\widehat \Omega$ is $\sigma$-compact follows
from the fact that with its usual Hilbert space metric, $(-\Delta)^a L^2(D)$ is separable (in particular
can covered with countably many translates of the unit ball), and that such a unit ball is compact w.r.t.\
$\widehat d$.

We next argue that the Borel $\sigma$-algebra $\widehat {\mathcal F}$ on $\widehat \Omega$ induced by $\widehat d$ is
the set of subsets of $\widehat \Omega$ that lie in $\mathcal F$.  Recall that $\mathcal F$ is the smallest $\sigma$-algebra that makes
$(h, \density)$ measurable for each $\density \in H_s(D)$.  Clearly a unit ball of $\widehat d$ is in this $\sigma$-algebra (since it $H_s(D)$
is dense in such a unit ball), which shows that $\widehat{\mathcal F} \subset \mathcal F$.  For the other direction, it suffices to observe that each generating subset of $\mathcal F$ of the form $ \{ (\cdot, \density) \leq c \}$, with $c \in \R$ and $\density \in H_s(D)$ has an intersection with $\widehat \Omega$ that belongs to $\widehat{\mathcal F}$.
\qed

We now cite the following basic fact (see \cite[Thm.~72,73]{webtutorial} or \cite[Ch.~7]{\Dudley}):

\begin{lemma}\label{l.regular}
Every $\sigma$-compact metric space is separable.
If $\mu$ is a Borel probability measure on a $\sigma$-compact metric space
then $\mu$ is regular, i.e., for each Borel measurable set $S$, we have
$$\mu(S) = \inf \mu(S') = \sup \mu(S''),$$
where $S'$ ranges over open supersets of $S$ and $S''$ ranges over
compact subsets of $S$.
\end{lemma}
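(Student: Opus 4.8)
The plan is to handle the two assertions separately, both by standard measure-theoretic arguments. For separability, I would write the space as $X = \bigcup_n K_n$ with each $K_n$ compact, and recall that a compact metric space is totally bounded: for each integer $m \geq 1$ the cover of $K_n$ by the balls of radius $1/m$ admits a finite subcover, and the (countably many) centers of these balls, taken over all $m$ and all $n$, form a countable dense subset of $X$. Thus $X$ is separable.

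For regularity, since $\mu$ is a probability measure (in particular finite), I would first establish the weaker statement that every Borel set is approximable in measure from outside by open sets and from inside by \emph{closed} sets, and only afterwards upgrade ``closed'' to ``compact'' using $\sigma$-compactness. Concretely, let $\mathcal{R}$ be the collection of Borel sets $S$ with the property that for every $\epsilon > 0$ there exist a closed set $F$ and an open set $U$ with $F \subseteq S \subseteq U$ and $\mu(U \setminus F) < \epsilon$. I would verify that $\mathcal{R}$ is a $\sigma$-algebra containing all closed sets; since the closed sets generate the Borel $\sigma$-algebra, this forces $\mathcal{R}$ to contain every Borel set. That $\mathcal{R}$ contains each closed set $C$ follows by taking $F = C$ and $U_n = \{x : \dist(x, C) < 1/n\}$: these open sets decrease to $C$ (as $C$ is closed), so $\mu(U_n) \downarrow \mu(C)$ by continuity from above, which is where finiteness of $\mu$ is used. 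Closure of $\mathcal{R}$ under complementation is immediate from the symmetry of the defining condition, since $F \subseteq S \subseteq U$ gives $U^c \subseteq S^c \subseteq F^c$ with $U^c$ closed, $F^c$ open, and $F^c \setminus U^c = U \setminus F$.

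The one genuinely delicate step, and the place I expect the real work, is closure under countable unions. Given $S = \bigcup_n S_n$ with $S_n \in \mathcal{R}$, I would choose for each $n$ a closed $F_n \subseteq S_n$ and open $U_n \supseteq S_n$ with $\mu(U_n \setminus F_n) < \epsilon\, 2^{-n-1}$, set $U = \bigcup_n U_n$ (open), and observe that $U \setminus \bigcup_n F_n \subseteq \bigcup_n (U_n \setminus F_n)$ has measure below $\epsilon/2$. The subtlety is that $\bigcup_n F_n$ need not be closed, so I would replace it by the finite union $F = \bigcup_{n \leq N} F_n$, which \emph{is} closed; since $\bigcup_{n \leq N} F_n \uparrow \bigcup_n F_n$, continuity from below lets me pick $N$ with $\mu\bigl(\bigcup_n F_n \setminus F\bigr) < \epsilon/2$, giving $\mu(U \setminus F) < \epsilon$.

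Finally, I would upgrade inner approximation by closed sets to inner approximation by compact sets using $\sigma$-compactness. Writing $X = \bigcup_n K_n$ with the $K_n$ compact and (after replacing $K_n$ by $\bigcup_{j \leq n} K_j$) increasing, for any closed $F$ the sets $F \cap K_n$ are compact and increase to $F$, so $\mu(F \cap K_n) \uparrow \mu(F)$; choosing $F$ with $\mu(S \setminus F) < \epsilon/2$ and then $n$ with $\mu\bigl(F \setminus (F \cap K_n)\bigr) < \epsilon/2$ yields a compact $S'' = F \cap K_n \subseteq S$ with $\mu(S \setminus S'') < \epsilon$. Combined with the open outer approximation already obtained, this gives both displayed equalities and completes the argument.
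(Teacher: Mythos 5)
Your proof is correct and complete. The paper does not actually prove this lemma---it only cites it as a standard fact (referring to Dudley, Ch.~7)---and your argument (total boundedness for separability; the ``good sets'' $\sigma$-algebra of sets approximable by closed/open pairs, followed by the upgrade from closed to compact via an increasing exhaustion by compacts) is precisely the standard textbook proof found in that reference, with the delicate points (finiteness of $\mu$ for continuity from above, truncating $\bigcup_n F_n$ to a finite union to restore closedness) handled correctly.
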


A family of probability measures $\mu$ on a separable topological space
$X$ is said to be {\em tight} if for every $\epsilon>0$ there is a compact
$X' \subset X$ such that $\mu(X') > 1-\epsilon$ for every $\mu$ in the family.
Prokhorov's theorem states that (assuming $X$ is separable) every tight family
of probability measures on $X$ is weakly pre-compact.  If $X$ is a separable metric space, then the converse holds,
i.e., every weakly pre-compact family of probability measures is also tight.

\begin{lemma} \label{l.weaklimit}
If $\Theta_1$ and $\Theta_2$ are two weakly pre-compact families of probability measures
on complete separable metric spaces $Z_1$ and $Z_2$, then the space of couplings between measures in the two families is weakly pre-compact.
\end{lemma}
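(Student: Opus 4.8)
The plan is to use Prokhorov's theorem, as recorded above, to reduce weak pre-compactness to tightness, and then to observe that tightness of a family of couplings is automatic once the two marginal families are tight. First I would note that since $Z_1$ and $Z_2$ are complete separable metric spaces, so is the product $Z_1 \times Z_2$ (with, say, the sum metric); hence Prokhorov's theorem applies to all three spaces, and in each case weak pre-compactness is equivalent to tightness. The couplings in question are precisely the probability measures $\nu$ on $Z_1 \times Z_2$ whose first marginal lies in $\Theta_1$ and whose second marginal lies in $\Theta_2$, and our goal is to show this family is tight.

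First I would extract uniform compact sets from the two marginal families. Fix $\epsilon > 0$. Since $\Theta_1$ is weakly pre-compact on the separable metric space $Z_1$, the converse direction of Prokhorov's theorem gives that $\Theta_1$ is tight, so there is a compact $K_1 \subset Z_1$ with $\mu_1(K_1) > 1 - \epsilon/2$ for every $\mu_1 \in \Theta_1$. Likewise there is a compact $K_2 \subset Z_2$ with $\mu_2(K_2) > 1 - \epsilon/2$ for every $\mu_2 \in \Theta_2$.

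Next I would control an arbitrary coupling by its marginals. The product $K_1 \times K_2$ is compact in $Z_1 \times Z_2$, and its complement is $\bigl((Z_1 \setminus K_1) \times Z_2\bigr) \cup \bigl(Z_1 \times (Z_2 \setminus K_2)\bigr)$. Hence, for any coupling $\nu$ of any $\mu_1 \in \Theta_1$ with any $\mu_2 \in \Theta_2$,
$$\nu\bigl((Z_1 \times Z_2) \setminus (K_1 \times K_2)\bigr) \le \nu\bigl((Z_1 \setminus K_1) \times Z_2\bigr) + \nu\bigl(Z_1 \times (Z_2 \setminus K_2)\bigr) = \mu_1(Z_1 \setminus K_1) + \mu_2(Z_2 \setminus K_2) < \epsilon,$$
where the middle equality uses that the marginals of $\nu$ are exactly $\mu_1$ and $\mu_2$. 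Thus $\nu(K_1 \times K_2) > 1 - \epsilon$ uniformly over all couplings $\nu$ in the family, which is precisely the statement that the family of couplings is tight. Since $\epsilon > 0$ was arbitrary and $Z_1 \times Z_2$ is separable, the forward direction of Prokhorov's theorem then yields that the family of couplings is weakly pre-compact, as claimed.

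There is no serious obstacle here: the only substantive point is the elementary union-bound estimate above, whose content is that a coupling cannot place more mass off $K_1 \times K_2$ than its two marginals place off $K_1$ and $K_2$ combined. Everything else is a direct invocation of Prokhorov's theorem in both directions, together with the standard fact that a finite product of complete separable metric spaces is again complete and separable.
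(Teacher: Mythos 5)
Your argument is correct and follows the same route as the paper: reduce to tightness via the converse of Prokhorov's theorem, observe that tightness of the couplings follows from tightness of the marginals (the paper leaves the union-bound over $K_1\times K_2$ implicit, which you spell out), and conclude by Prokhorov's theorem on the product space.
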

\proof By the converse to Prokhorov's theorem, $\Theta_1$ and $\Theta_2$ are both tight.
This
implies that the space of couplings between elements of $\Theta_1$ and $\Theta_2$
is also tight, which in turn implies pre-compactness
(by Prokhorov's theorem).
\qed

The following is another simple topological observation that will be useful later on:
\begin{lemma} \label{l.weakgraphconvergence}
Suppose that $Z_1$ and $Z_2$ are complete separable metric spaces, $\mu$ is a Borel probability measure
on $Z_1$ and $\pphi_1, \pphi_2, \ldots$ is a sequence of measurable functions from $Z_1$
to $Z_2$.  Suppose further that when $z$ is a random variable distributed
according to $\mu$, the law of $(z, \pphi_i(z))$ converges weakly to that of $(z, \pphi(z))$ as
$i \to \infty$, where $\pphi:Z_1 \to Z_2$ is Borel measurable.  Then the functions $\pphi_i$,
viewed as random variables on the probability space $Z_1$, converge to $\pphi$ in probability.
\end{lemma}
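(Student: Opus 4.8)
The plan is to recast the statement in terms of the joint laws on the product space and then exploit that the limiting law is concentrated on the graph of $\pphi$. Write $\nu_i$ for the law of $(z,\pphi_i(z))$ and $\nu$ for the law of $(z,\pphi(z))$ on $Z_1\times Z_2$, equipped with the product metric (which is again complete and separable); by hypothesis $\nu_i\to\nu$ weakly. For $\epsilon>0$ let $U_\epsilon:=\{(z,w)\in Z_1\times Z_2:\dist_{Z_2}(w,\pphi(z))>\epsilon\}$, a Borel set since $\pphi$ is Borel and $\dist_{Z_2}$ is continuous. The desired conclusion, convergence of $\pphi_i$ to $\pphi$ in probability, is exactly the assertion that $\nu_i(U_\epsilon)\to 0$ for every $\epsilon>0$, because $\nu_i(U_\epsilon)=\mu\bigl(\{z:\dist_{Z_2}(\pphi_i(z),\pphi(z))>\epsilon\}\bigr)$. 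Note that $\nu(U_\epsilon)=0$, since on the graph of $\pphi$ the relevant distance vanishes, so morally weak convergence should force $\nu_i(U_\epsilon)$ down to $0$.

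The main obstacle is that $U_\epsilon$ is in general neither open nor closed: the map $(z,w)\mapsto\dist_{Z_2}(w,\pphi(z))$ is only Borel, not continuous, because $\pphi$ is merely measurable, so the portmanteau inequality cannot be applied to $U_\epsilon$ directly. (This is also why one cannot simply invoke Skorokhod representation: Skorokhod would supply $(X_i,Y_i)\to(X,Y)$ almost surely with $Y_i=\pphi_i(X_i)$ and $Y=\pphi(X)$, but without continuity of $\pphi$ one has no way to compare $Y_i$ with $\pphi(X_i)$, which is what convergence in probability requires.) To circumvent this I would regularize $\pphi$ via Lusin's theorem. Fix $\delta>0$; since $\mu$ is a Borel probability measure on the metric space $Z_1$ it is inner regular by closed sets, and $Z_2$ is separable, so Lusin's theorem yields a closed set $K\subset Z_1$ with $\mu(Z_1\setminus K)<\delta$ on which $\pphi$ restricts to a continuous function.

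With such a $K$ fixed, set $C:=\{(z,w)\in Z_1\times Z_2:z\in K\text{ and }\dist_{Z_2}(w,\pphi(z))\ge\epsilon\}$. Because $K$ is closed and $\pphi|_K$ is continuous, the function $(z,w)\mapsto\dist_{Z_2}(w,\pphi(z))$ is continuous on the closed set $K\times Z_2$, so $C$ is closed in $Z_1\times Z_2$. The portmanteau theorem then gives $\limsup_i\nu_i(C)\le\nu(C)=0$. Splitting the event of interest according to whether $z\in K$, I would estimate
$$\nu_i(U_\epsilon)=\mu\bigl(\{z:\dist_{Z_2}(\pphi_i(z),\pphi(z))>\epsilon\}\bigr)\le\mu(Z_1\setminus K)+\nu_i(C)<\delta+\nu_i(C),$$
using that on $K$ the strict inequality $>\epsilon$ implies $(z,\pphi_i(z))\in C$. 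Taking $\limsup_i$ yields $\limsup_i\nu_i(U_\epsilon)\le\delta$, and since $\delta>0$ is arbitrary, $\nu_i(U_\epsilon)\to 0$; as this holds for each $\epsilon>0$, the $\pphi_i$ converge to $\pphi$ in probability. The only delicate step is the passage from the merely measurable $U_\epsilon$ to the genuinely closed set $C$ through Lusin's theorem; the remaining ingredients are a routine application of portmanteau and a union bound.
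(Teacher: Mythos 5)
Your proof is correct, but it takes a genuinely different route from the one in the paper. You regularize the measurable function $\pphi$ via Lusin's theorem: you pass to a closed set $K\subset Z_1$ of nearly full $\mu$-measure on which $\pphi$ is continuous, so that the bad event $\{z\in K:\dist(\pphi_i(z),\pphi(z))\ge\epsilon\}$ lifts to a genuinely \emph{closed} subset $C$ of $Z_1\times Z_2$ carrying zero mass under the limit law, and you then apply the closed-set ($\limsup$) half of the portmanteau theorem. The paper instead regularizes on the target side: it uses tightness to find a compact $K\subset Z_2$ with $\pphi_*\mu(K)>1-\epsilon$, partitions $K$ into finitely many pieces $B_j$ of diameter at most $\epsilon$, approximates the preimages $C_j=\pphi^{-1}(B_j)$ from outside by open sets $C_j'$ using the regularity lemma (Lemma \ref{l.regular}), and applies the open-set ($\liminf$) half of portmanteau to the union of boxes $C_j'\times B_j'$, concluding that $\pphi_i(z)$ lies within $2\epsilon$ of $\pphi(z)$ off a set of measure about $2\epsilon$. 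Both arguments rest on the same underlying fact --- inner/outer regularity of Borel probability measures on (here $\sigma$-compact or Polish) metric spaces --- but your version is shorter and avoids the partition bookkeeping, at the cost of invoking Lusin's theorem for a Borel map into a separable metric space; you correctly flag the two hypotheses that make that invocation legitimate (inner regularity of $\mu$ by closed sets and second countability of $Z_2$), and your reduction of convergence in probability to the statement $\nu_i(U_\epsilon)\to 0$, together with the split over $K$ versus $Z_1\setminus K$, is airtight. One could quibble that you should note explicitly that $z\mapsto\dist_{Z_2}(\pphi_i(z),\pphi(z))$ is Borel (so that $\nu_i(U_\epsilon)$ is well defined), which again uses separability of $Z_2$, but this is routine.
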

\proof
By tightness of the set of measures in the sequence (recall Lemma \ref{l.weaklimit}),
for each $\epsilon>0$, we can find a compact $K \subset Z_2$ such that
$\mu(\pphi^{-1}(K)) > 1-\epsilon$.
Let $B_1, \ldots, B_k$ be a finite partition
of $K$ into disjoint measurable sets of diameter at most $\epsilon$ and write $C_j = \pphi^{-1} B_j$
for each $j$.
Then Lemma \ref{l.regular} implies that there exist open subsets
$C'_1, \cdots C'_k$ of $Z_1$ such that $C'_j \supset C_j$ for each $j$
and $\sum_j \mu (C'_j \setminus C_j) \leq \epsilon$.
For each $j$, let $B'_j$ be the set of points of distance at most
$\epsilon$ from $B_j$.  Let $\tilde \mu_i$ denote the law of $(z, \pphi_i(z))$ and
$\tilde \mu$ the law of $(z, \pphi(z))$.
Set $A':=\bigcup_{j=1}^k C_j'\times B_j'$ and $A:=\bigcup_{j=1}^k C_j\times B_j'$.
Then a standard consequence of weak convergence
(Portmanteau's theorem \cite[Theorem~11.1.1]{\Dudley}) implies
$ \liminf_{i\to\infty} \tilde\mu_i(A')\ge\tilde\mu(A')$.
But $$\tilde\mu(A')\ge \tilde\mu(A)= \sum_{j=1}^k\mu(C_j) > 1-\epsilon\,.$$
Hence $\liminf_{i\to\infty}\tilde\mu_i(A') >1-\epsilon$.
Since $\sum_{j=1}^k \mu(C_j'\setminus C_j)\le\epsilon$,
we have $\liminf_{i\to\infty}\tilde\mu_i(A)\ge 1-2\epsilon$.
But when $(z,\pphi_i(z))\in A$, the distance between $\pphi_i(z)$ and $\pphi(z)$ is
at most $2\,\epsilon$.
Hence,
$$\liminf_{i \to \infty} \mu \{x\in Z_1 : d(\pphi_i(x),\pphi(x)) \leq 2\epsilon \} \geq 1-2\epsilon.$$
Since this holds for any $\epsilon>0$, the result follows.
\qed

\subsection{Limits of discrete local sets are local}

\begin{lemma} \label{Hausdorfflimitislocal}
Let $D_n$ be a sequence of $\TG$-domains with maps $\phi_n:D_n
\rightarrow \H$ such that $r_{D_n} \rightarrow \infty$ as $n
\rightarrow \infty$.  Let an instance $h$ of the GFF on $\H$ be coupled
with the discrete GFF on each $D_n$, as in Section \ref{ss.GFFapprox}.
Let $A_n$ be a sequence of discrete local
subsets of $D_n \cap \TG$.
Then there is a subsequence along which
the law of $(h,\phi_n A_n)$ converges weakly (in the space of
measures on $\widehat \Omega \times \Gamma$)
to a limiting coupling
$(h, A)$ with respect to the sum of the metric $\widehat d$ on the first component
and $\dhaus$ on the second component. In any such limit,
$A$ is local.
\end{lemma}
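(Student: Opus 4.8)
The plan is to first extract a subsequential limit by a soft compactness argument and then show that any such limit satisfies characterization~(\ref{i.Bcond}) of Lemma~\ref{l.localdefequivalence}. For the compactness, note that the first marginal of the law of $(h,\phi_n A_n)$ is, for every $n$, the single fixed law of $h$ on $\widehat\Omega$; this measure is tight because $\widehat\Omega$ is $\sigma$-compact and Borel probability measures on $\sigma$-compact metric spaces are regular (Lemma~\ref{l.regular}). The second marginal is supported on the compact space $\Gamma$. Hence the joint laws on $\widehat\Omega\times\Gamma$ form a tight family, and Prokhorov's theorem produces a weakly convergent subsequence $(h,\phi_n A_n)\Rightarrow(h,A)$. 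It remains to prove that every such limit $A$ is local, which I would do by verifying (\ref{i.Bcond}).

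Fix a deterministic open set $B\subset\H$. Since the projections of $h$ onto $\Supp_B$ and onto $\Harm_B$ are independent Gaussians, characterization~(\ref{i.Bcond}) for the event $E:=\{A\cap B=\emptyset\}$ is equivalent to the bilinear identity
$$\E\bigl[f\,\mathbf 1_E\,g\bigr]=\E[f]\,\E\bigl[\mathbf 1_E\,g\bigr]$$
holding whenever $f$ is a bounded continuous function of finitely many pairings of the $\Supp_B$-projection of $h$ with fixed test functions and $g$ is a bounded continuous function of the $\Harm_B$-projection. I would obtain this identity as the limit of its exact discrete analog. On the discrete side, let $B_n$ be the set of vertices $v$ of $D_n$ with $\phi_n(v)\in B$, and put $E_n:=\{A_n\cap B_n=\emptyset\}$. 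Discrete locality of $A_n$ (Lemma~\ref{l.localiff}) says that, conditioned on the restriction $h_{B_n^c}$, the event $E_n$ is independent of $h_{B_n}$. Rewriting this conditioning through the orthogonal (hence independent) decomposition of $h_{D_n}$ into its discrete $\Supp_{B_n}$- and $\Harm_{B_n}$-parts turns it into the exact identity $\E[\hat f_n\,\mathbf 1_{E_n}\,\hat g_n]=\E[\hat f_n]\,\E[\mathbf 1_{E_n}\,\hat g_n]$, where $\hat f_n,\hat g_n$ are the functionals defining $f,g$ evaluated on the discrete projections.

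By Lemma~\ref{projectiveconvergence} (asymptotic density of the piecewise-linear subspaces), these discrete projections, pushed forward under $\phi_n$, converge in probability to the continuum $\Supp_B$- and $\Harm_B$-projections of $h$, so $\hat f_n\to f$ and $\hat g_n\to g$ in the coupling. The factor $\E[\hat f_n]\to\E[f]$ then follows at once, since no hitting event is involved. The remaining factors each contain the indicator $\mathbf 1_{E_n}$, and passing them to the limit is the one genuinely delicate point—and the step I expect to be the main obstacle. The difficulty is that the hitting indicator $\mathbf 1_E$ is \emph{not} continuous on $\Gamma$ in the Hausdorff metric: $\{A\cap B\neq\emptyset\}$ is open, so $\mathbf 1_E$ is only upper semicontinuous, and moreover the lattice event $E_n$, defined through $B_n$, may fail to match $E$ near $\partial B$.

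I would handle both issues at once by a squeezing argument. Sandwich $B$ between a slightly smaller open set $B^-$ and its $\epsilon$-neighborhood $B^+$; for large $n$ the corresponding discrete hitting events satisfy $E_n^{+}\subset E_n\subset E_n^{-}$, and each one-sided bound passes to the limit by applying Portmanteau's theorem to the fixed open set $\{A\cap B^+\neq\emptyset\}$ and the fixed closed set $\{A\cap B^-=\emptyset\}$, with the bounded continuous weights $\hat f_n\hat g_n$ replaced (up to vanishing error) by their limits $fg$. As $\epsilon\to0$ the upper bound converges to $\E[f\,\mathbf 1_{\{A\cap B=\emptyset\}}\,g]$ and the lower bound to $\E[f\,\mathbf 1_{\{A\cap\overline B=\emptyset\}}\,g]$, so the gap between them is the mass of the event that $A$ meets $\partial B$ without meeting $B$. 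It therefore suffices to verify the identity for open sets $B$ whose boundary is almost surely not met by $A$ in this way; this is a class rich enough to deduce full locality by exactly the grid-approximation and reverse-martingale argument already used in the proof of Lemma~\ref{l.localdefequivalence}. Closing the squeeze then forces the displayed bilinear identity, which yields (\ref{i.Bcond}) and hence the locality of $A$.
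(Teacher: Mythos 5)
Your compactness step is essentially the paper's (Lemmas \ref{l.nicemetric}, \ref{l.regular} and \ref{l.weaklimit} plus Prokhorov), and reducing locality to the bilinear identity expressing characterization (\ref{i.Bcond}) is a legitimate reformulation. The genuine gap is in the passage from the discrete identity to the continuum one. You assert that the discrete $\Supp_{B_n}$- and $\Harm_{B_n}$-projections of $h_{D_n}$, pushed forward by $\phi_n$, converge to the continuum $\Supp_B$- and $\Harm_B$-projections of $h$ ``by Lemma \ref{projectiveconvergence}.'' That lemma only says that the projection of a fixed element of $H(\H)$ onto the full piecewise-linear space $H_{\TG}(D_n)$ converges to it; it says nothing about the decomposition relative to a subdomain $B$. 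Convergence of the discrete $\Harm_{B_n}$-projection to the continuum $\Harm_B$-projection amounts to convergence of discrete harmonic extensions of lattice boundary data to continuum harmonic extensions, which depends on the regularity of $\partial B$ and on controlling the field near $\partial B$; it is not proved anywhere in the paper and is not routine for an arbitrary open $B$. The paper's argument is engineered precisely to avoid this: it tests only against Dirichlet pairings $(h,g_n)_\nabla$ where $g_n$ is a discrete approximation of $g$ supported in a compact subset $B'\subset B$. Since $g_n$ lies in $\Supp_B$ \emph{exactly}, $(h,g_n)_\nabla$ is exactly independent of the continuum $\Harm_B$-projection for every finite $n$ --- no discrete-to-continuum convergence of harmonic parts is needed --- while the three-fold orthogonal decomposition $h=h^1_n+h^2_n+h^3_n$ together with discrete locality makes $(h,g_n)_\nabla$ independent of the discrete hitting data; one then only needs $\|g_n-g\|_\nabla\to 0$ and the fact that weak limits of independent random variables are independent.

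The second soft spot is the hitting indicator. Your Portmanteau squeeze correctly identifies that $\mathbf 1_{\{A\cap B=\emptyset\}}$ is not continuous on $\Gamma$, but it terminates with a residual event (``$A$ meets $\partial B$ without meeting $B$'') and an unproved claim that the class of sets $B$ for which this event is null is rich enough to run the grid-approximation step of Lemma \ref{l.localdefequivalence}; this is plausible (only countably many $\delta$-neighborhood boundaries can be charged) but is left as an assertion. The paper dissolves the issue differently: it keeps the discrete hitting event $S_n$ (defined relative to the smaller set $B'$) as an explicit coordinate of the tuple whose weak limit is taken, obtains independence of $(h,g)_\nabla$ from the limiting pair $(S_{\mathrm{lim}},\tilde A_{\mathrm{lim}})$, and then compares $S_{\mathrm{lim}}$ with $S$ using the containment between hitting $B'$ and hitting $B$. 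So your overall architecture is sound, but the projection-convergence claim is a real hole that cannot be patched by citing Lemma \ref{projectiveconvergence}, and the boundary-event step needs to be completed rather than sketched.
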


\begin{proof}
Lemmas \ref{l.nicemetric}, \ref{l.regular}, and \ref{l.weaklimit} imply the existence of the subsequential limit
$(h, A)$, so it remains only to show that in any such limit $A$ is local.
We will prove that
characterization (\ref{i.B2cond}) for locality as given
in Lemma \ref{l.localdefequivalence} holds.  For this, it suffices to show
that for every deterministic open $B \subset \H$ and function $\phi \in -\Delta H_s(B)$
(supported in a compact subset of $B$) the law of $(h,
\phi)$ is independent of the pair $(S,\tilde A)$ (as defined in Lemma \ref{l.localdefequivalence})
together with the projection of $h$ onto $\Harm_B$.   Here we are using the fact
that for Gaussian fields the marginals characterize the field; see Lemma
\ref{p.projectionisadistribution}.  It is clearly enough to consider the case that
$B$ has compact closure in $\H$.

Fix $g \in H_s(B)$ and set $\phi = -\Delta g$.  Let $\mathbb S_n$ denote the space $\{f\circ\phi_{D_n}^{-1} : f
\in H_{\TG}(D_n) \}$.
By Lemma \ref{projectiveconvergence},
we can approximate $g$ by elements $g_n$ in $\mathbb S_n$ in such a way that
$\|g_n - g\|_\nabla \rightarrow 0$ as $n \rightarrow
\infty$.  Let $B'$ be the set of points in $B$ of distance at least $\epsilon$ from $\partial B$,
where $\epsilon$ is small enough so that $g$ is compactly supported in $B'$.
In fact, the construction given in the proof of Lemma \ref{projectiveconvergence}
ensures that each $g_n$ will be supported in $B'$ for all $n$ sufficiently large.

Now, for each fixed $n$, let $h^1_n$ denote the projection of $h$ onto the space of
functions in $\mathbb S_n$ that vanish outside of $B'$.  Let $h^2_n$ and $h^3_n$ be such that $h^1_n + h^2_n$
is the projection of $h$ onto $\mathbb S_n$ and $h^1_n + h^2_n + h^3_n = h$.  Clearly,
$h^1_n$, $h^2_n$, and $h^3_n$ are mutually independent, since they are projections of $h$
onto orthogonal spaces.

Following characterization \ref{i.Aislocalequiv2} of Lemma \ref{l.localiff}, let
$S_n$ be the event that $A_n$ includes a vertex of a triangle whose image under $\phi_n$
intersects $B'$, and let $\tilde A_n$ be equal to $A_n$
on the event $S_n^c$ and $\emptyset$ otherwise.  By Lemma \ref{l.localiff}, conditioned on $h^2_n$, the
pair $(S_n ,\tilde A_n)$ is independent of $h^1_n$.  In fact, (since
$h^3_n$ is a priori independent of the triple $(A_n, h^1_n, h^2_n)$), the pair $(S_n, \tilde A_n)$
is independent of $h^1_n + h^3_n$.

When $n$ is large enough, the space $\Harm_B$ is
orthogonal to $\mathbb S_n$.  Thus, for each sufficiently large $n$, $(h, g_n)_\nabla$ is independent
of the projection $h_{B^c}$ of $h$ onto $\Harm_B$ {\em and} the pair $(S_n, \tilde A_n)$.

Now, since $\|g_n - g\|_\nabla \rightarrow 0$ as $n \rightarrow
\infty$, the random variables $(g_n - g ,h )_\nabla$ tend to zero in law
as $n \to \infty$.  Since weak limits of independent random variables are independent,
we conclude that in any weak limit $(h,S_\text{lim},\tilde A_\text{lim}, A)$ of the
quadruple $(h, S_n,\phi_n \tilde A_n, \phi_n A_n)$
(again, subsequential limits exist by Lemmas \ref{l.nicemetric}, \ref{l.regular}, and \ref{l.weaklimit}),
the value $(h,g)_\nabla$ is independent of $h_{B^c}$ and $(S_\text{lim},\tilde A_\text{lim})$.
The event $S_\text{lim}$ contains the event $S$ (since
any Hausdorff limit of sets that intersect $B'$ must intersect $B$), and thus the pair $(S_\text{lim},
\tilde A_\text{lim})$
determines the pair $(S,A)$.  This implies that $(h, g)_\nabla$ is independent of $h_{B^c}$ and $(S, \tilde A)$.
Since this is true for all $B$ and $g$ supported on
$B$, we conclude that $A$ is local.
\qed
\end{proof}

\subsection{Statement of the height gap lemma}
We now state the special case of the {\bf height gap lemma} (as
proved in \cite{\DGFFpaper}) that is relevant to the current work.
(The lemma in \cite{\DGFFpaper} applies to more general boundary
conditions.)

As usual, we let $D$ be a $\TG$ domain with boundary conditions
$-\lambda$ on one arc $\p_-$ and $\lambda$ on a complementary
arc $\p_+$, and let $x_\p$ and $y_\p$ denote respectively the
clockwise and counterclockwise endpoints of $\p_+$.

\begin{figure}
\begin{center}
\includegraphics[width=\linewidth]{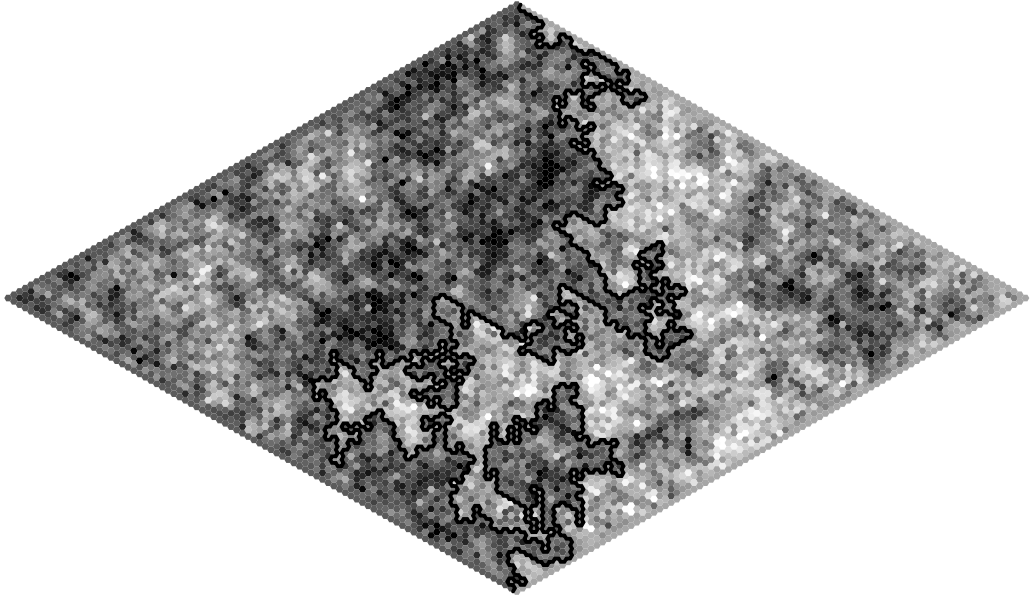}
\end{center}
\caption{\label{f.interface}Gaussian free field on faces of hexagonal lattice---faces
shaded by height---with boundary conditions equal to $-\lambda$ on
the left boundary arc and $\lambda$ on the right boundary arc, where
$\lambda>0$ is a constant. Thick line indicates chordal interface
between positive and negative height hexagons.  In the figure,
$\lambda$ is taken to be the special constant for which, as the mesh size is
taken to zero, the law of the interface converges to that of
\SLEkk4/.}
\end{figure}

Let $\gamma^T$ be the
path in the dual lattice of $\TG$ from $x_\p$ to $y_\p$ that has
adjacent to its right hand side vertices in $\p_+$ or vertices
where $h>0$ and has adjacent to its left hand side vertices in
$\p_-$ or vertices where $h<0$, stopped at some stopping time $T$
for the discrete exploration process.  This is the path that traces
the boundary between hexagons with positive sign and hexagons with
negative sign in the dual lattice, as described in \cite{\DGFFpaper}
and illustrated in Figure \ref{f.interface}. Let $\vv$ be some vertex of
$\TG$ in $D$.

Let $V_-$ denote the vertices on the left side of $\gamma^T$
together with the vertices in $\p_-$ and $V_+$ the vertices on
the right side or in $\p_+$. Let $F_T$ denote the function that is $+\lambda$
on $V_+(\gamma^T)$, $-\lambda$ on $V_-(\gamma^T)$ and
discrete-harmonic at all other vertices in $\closure D$.  Let
$h_T$ be the discrete harmonic interpolation of the values of $h$
on $V_-(\gamma^T) \cup V_+(\gamma^T)$ and on all $\TG$-vertices in
$\p D$.

\begin{lemma}\label{heightgaplemma}
For some fixed value of $\lambda>0$, we have
$$
h_T(\vv)-F_T(\vv)\to 0
$$
in probability as $T$, $D$ and $\vv$ are taken so that $\dist(\vv,\p
D)\to\infty$.  Similarly, if $\vv$ is a random vertex (with law
independent of $h$) supported on the set of points of distance at
least $r$ from $\partial D$, then as $r \to \infty$
$$\Eb{h_T(\vv)-F_T(\vv)\md\gamma^T}$$
(viewed as a random variable depending on $\gamma^T$ --- the expectation is respect
to both $\vv$ and $h_T$) tends to zero in probability.
\end{lemma}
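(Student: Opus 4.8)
The plan is to reduce both statements to a single harmonic-averaging estimate and then feed in the core height-gap bound of \cite{\DGFFpaper}. The starting observation is purely deterministic. Since $\p_-\subset V_-(\gamma^T)$ and $\p_+\subset V_+(\gamma^T)$, both $h_T$ and $F_T$ are discrete harmonic at every vertex of $\closure D$ outside $V_-(\gamma^T)\cup V_+(\gamma^T)$, so their difference is as well, and on $V_-(\gamma^T)\cup V_+(\gamma^T)$ it equals the \emph{excess height}
$$\beta(v):=\begin{cases} h(v)-\lambda, & v\in V_+(\gamma^T),\\ h(v)+\lambda, & v\in V_-(\gamma^T).\end{cases}$$
On $\p D$ we have $\beta\equiv 0$, since there $h=\pm\lambda$ matches $F_T$. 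Writing $\omega_\vv$ for the distribution of the first vertex of $V_-(\gamma^T)\cup V_+(\gamma^T)$ hit by simple random walk from $\vv$, we obtain the exact identity
$$h_T(\vv)-F_T(\vv)=\sum_v \omega_\vv(v)\,\beta(v),$$
so everything reduces to controlling this harmonic-measure average of $\beta$.

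The essential input is the height gap itself, namely the fact established in \cite{\DGFFpaper} that, conditioned on $\gamma^T$, the mean excess height $\Eb{\beta(v)\md\gamma^T}$ at a vertex $v$ adjacent to $\gamma^T$ tends to $0$ as $\dist(v,\p D)\to\infty$. Granting this, the second claim follows by applying $\Eb{\cdot\md\gamma^T}$ to the identity above: because $V_-(\gamma^T),V_+(\gamma^T)$ and hence $\omega_\vv$ are measurable functions of $\gamma^T$, and $\vv$ is independent of $h$, the conditional mean of $h_T(\vv)-F_T(\vv)$ is a harmonic-measure average (further averaged over $\vv$) of the quantities $\Eb{\beta(v)\md\gamma^T}$, which vanish on $\p D$ and are $o(1)$ on interface-adjacent vertices far from $\p D$. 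Averaging $\vv$ over a region at distance at least $r$ from $\p D$ is what makes this work: it keeps the bulk of the random-walk mass on interface-adjacent vertices far from $\p D$, where the height gap applies, while the mass reaching a fixed neighborhood of the endpoints $x_\p,y_\p$ (where boundary effects spoil the gap) is small on average and contributes negligibly.

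To upgrade the first claim from a statement about means to convergence \emph{in probability}, I would split
$$\sum_v \omega_\vv(v)\,\beta(v)=\sum_v \omega_\vv(v)\,\Eb{\beta(v)\md\gamma^T}+\sum_v \omega_\vv(v)\,\bigl(\beta(v)-\Eb{\beta(v)\md\gamma^T}\bigr).$$
The first term is handled exactly as above. The second has conditional mean zero, so by Chebyshev it suffices to bound its conditional variance $\sum_{v,w}\omega_\vv(v)\,\omega_\vv(w)\,\mathrm{Cov}\bigl(\beta(v),\beta(w)\mid\gamma^T\bigr)$. Here I would use that, conditioned on $\gamma^T$, the excess heights have uniformly bounded variance and correlations decaying in $|v-w|$; since the harmonic measure of a far-away point spreads over a macroscopic stretch of the interface, the off-diagonal decay forces this double sum to $0$, giving the claim.

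The main obstacle is the height-gap bound $\Eb{\beta(v)\md\gamma^T}\to 0$ --- the central and genuinely hard result of \cite{\DGFFpaper}, which I import rather than reprove. Within the reduction above, the remaining work is to obtain that bound \emph{uniformly} in the position of $v$ along $\gamma^T$ and to control the harmonic-measure mass near $x_\p,y_\p$ (for the mean statement), together with the decay of conditional covariances of the excess heights needed for the concentration step (for the in-probability statement).
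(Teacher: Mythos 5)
The first thing to note is that the paper does not prove this lemma: it is stated as a special case of the height gap lemma ``as proved in \cite{\DGFFpaper}'', and the authors explicitly remark that its proof occupies roughly two thirds of the body of that companion paper. So there is no in-paper argument to compare yours against; the only meaningful test is whether your sketch reduces the statement to something strictly easier than itself. Your deterministic setup is correct: $h_T-F_T$ is discrete harmonic off $V_-\cup V_+$, vanishes on $\p D$, equals the excess height $\beta$ on the interface-adjacent vertices, and is therefore the harmonic-measure average $\sum_v\omega_{\vv}(v)\,\beta(v)$. The derivation of the second (conditional-expectation) claim from your input (a), using that $\omega_{\vv}$ is $\gamma^T$-measurable and that $\vv$ is independent of $h$ (hence of the pair $(h,\gamma^T)$), is also fine modulo the endpoint issues you flag.

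The gap is that the two inputs you import are, jointly, essentially the entire lemma. Input (a), that $\E\bigl[\beta(v)\bigm|\gamma^T\bigr]\to0$ for $v$ adjacent to $\gamma^T$, uniformly in the position of $v$ away from $\p D$, is not a smaller building block of the height gap lemma --- it \emph{is} the height gap lemma (the case of an interface-adjacent vertex, in conditional expectation, with the uniformity that your averaging step needs), and it is exactly the part whose proof consumes most of \cite{\DGFFpaper}. Input (b), the decay of the conditional covariances $\mathrm{Cov}\bigl(\beta(v),\beta(w)\bigm|\gamma^T\bigr)$, is asserted with no justification and is not a standard fact: conditioning on the interface is a non-local, non-Gaussian conditioning (the conditional field is not a DGFF), and establishing any quantitative approximate independence of the near-interface heights given $\gamma^T$ is precisely the hard technical content of that paper; it cannot be waved in as ``correlations decaying in $|v-w|$''. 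There is also a smaller unaddressed point: $\beta$ is unbounded, so ``small harmonic-measure mass near $x_\p,y_\p$ and near $\p D$'' does not by itself make those contributions negligible without uniform conditional moment bounds on $\beta$ there. In short, the scaffolding (harmonic averaging of excess heights, plus a conditional second-moment bound for the in-probability statement) is a reasonable description of how such a proof must be organized, but nothing in the proposal reduces the difficulty: it restates the lemma as inputs (a) and (b) and then averages.
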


\section{Proofs of main results} \label{proofsofmainresultssection}



\proofof{Theorem \ref{uniquecontinuumpath}}
In Section \ref{heightevolutionsection} (Lemma \ref{contourplusGFF}),
we explicitly produced a coupling of $W$ (the Loewner driving parameter of an \SLEkk4/) and $h$ (the
GFF on $\H$ with $\pm \lambda$ boundary conditions) with the conformal
Markov property described in
Theorem \ref{uniquecontinuumpath}.  Lemma \ref{lawofpathlemma} implies
that any $(\tilde h, \tilde W)$ that satisfies the hypotheses of
Theorem \ref{uniquecontinuumpath} must have this same law---and that
the value of $\lambda$ is indeed $\sqrt{\pi/8}$.

All that remains to prove in
Theorem \ref{uniquecontinuumpath}
is items \ref{i.Wdetermined} and \ref{i.samepathasincontinuumcontoutheorem}.
To prove \ref{i.Wdetermined} we must show that $W$ is equivalent (up to redefinition on a set of
measure zero) to an $\mathcal F$-measurable function from $\Omega$
to $\Lambda$.  In other words, we must show that given $h$, the conditional
law of $W$ (in any regular version of the conditional probability) is almost
surely supported on a single element of $\Lambda$.

Let $h$ be an instance of the GFF (with boundary conditions
$-\lambda$ on $(-\infty, 0)$ and $\lambda$ on $(0, \infty)$).
Write $\Phi(z) = -z^{-1}$.  Then $\Phi$ is a conformal automorphism of $\H$ sending
$0$ to $\infty$ and $\infty$ to zero, and $-h \circ \Phi$ has the same
law as $h$ (where $-h \circ \Phi$ is the pullback of $h$ as defined in Section \ref{ss.notation}).
  Let $W$ and $V$ be random elements of $\Lambda$ coupled with $h$
in such a way that
\begin{enumerate}
\item The pair $(h, W)$ satisfies the hypotheses of Theorem  \ref{uniquecontinuumpath}.
\item The pair $(-h \circ \Phi, V)$ satisfies the hypotheses of Theorem \ref{uniquecontinuumpath}.
\item Given $h$, $V$ and $W$ are independent of one another.
\end{enumerate}

Let $\gamma^1$ be the path with Loewner evolution given by $W$ and let $\gamma^2$ be
the image of the Loewner evolution generated by $V$ under $\Phi$.  Then the law of
$\gamma^1$ is that of an \SLEkk4/ from $0$ to $\infty$ and that the law of $\gamma^2$ is that
of an \SLEkk4/ from $\infty$ to $0$.

For any fixed time $T$, conditioned on $\gamma^2([0,T])$, the law of $h$ is that of a GFF
on $\H \setminus \gamma^2([0,T])$ with boundary conditions of $-\lambda$ on the left side of
$\gamma^2([0,T])$ and $(-\infty,0)$ and $\lambda$ on the right side of $\gamma^2([0,T])$ and
$(0,\infty)$.  In particular (recall Lemma \ref{l.gammaislocal}) $\gamma^2([0,T])$ is local, and
the same holds if $T$ is a stopping time of $\gamma^2([0,T])$.

If we fix a stopping time $T_1$ for $\gamma^1$, then $\gamma^1([0,T_1])$ is also local.  On the event that $\gamma^1([0,T_1])$ and $\gamma^2([0,T])$ do not intersect each other,
Lemma \ref{doublelocal} yields that the conditional law of $h$ given both sets is that of a
GFF on the complement of these sets, with the expected $\pm \lambda$ boundary conditions.  Since
the same holds for any $T_1$,
Lemma \ref{lawofpathlemma} and Lemma \ref{doublelocal} imply that
conditioned on $\gamma^2([0,T])$, the law of $\gamma^1$ --- up until the first time it hits $\gamma^2([0,T])$,
is that of an \SLEkk 4/ in $\H \setminus \gamma^2([0,T])$, started at $0$ and targeted at $\gamma^2(T)$.


It follows that almost surely $\gamma^1$ hits $\gamma^2([0,T])$ for the first
time at $\gamma^2(T)$. Since this
applies to any choice of $T$, we conclude that $\gamma^1$ hits a
dense countable set of points along $\gamma^2$, and (by symmetry) $\gamma^2$
hits a dense countable set of points along $\gamma^1$, and hence
the two paths (both of which are almost surely continuous simple
paths, by the corresponding almost sure properties of \SLEkk 4/) are equal almost
surely.  This implies that conditioned on $V$ and $h$, the law of $W$
is almost surely supported on a single element of $\Lambda$.  Since
$V$ and $W$ are conditionally independent given $h$, it follows that
conditioned on $h$, the law of $W$ is almost surely supported on a single
element of $\Lambda$. The proof of item \ref{i.samepathasincontinuumcontoutheorem}
will be established during the proof of Theorem \ref{continuumcontourtheorem} below.
\QED

In what follows, we use the notation introduced in Section \ref{ss.GFFapprox}
and the statement of Theorem \ref{continuumcontourtheorem}.
We will first give a proof of Theorem \ref{continuumcontourtheorem}
that assumes the main result
of \cite{\DGFFpaper}, namely that the discrete interfaces
converge in law to \SLEkk 4/ with respect to the metric $\dstrong$.
Afterwards, we will show how this convergence in law can be derived
from Lemma \ref{heightgaplemma} (the height gap lemma) and Theorem \ref{continuumcontourtheorem}.
That is, we give an alternate way of deriving the main
result of \cite{\DGFFpaper} (in the case of $\pm \lambda$ boundary conditions)
from Theorem \ref{continuumcontourtheorem}, so that the only result
from \cite{\DGFFpaper} that we really need for this paper is
Lemma \ref{heightgaplemma}.
(The proof of Lemma~\ref{heightgaplemma} admittedly takes about $2/3$ of the body of~\cite{\DGFFpaper},
excluding the introduction, preliminaries, etc.)

\proofof{Theorem \ref{continuumcontourtheorem}}
Let $\phi_n:D_n\to \H$ be a sequence of conformal homeomorphisms from $\TG$-domains $D_n$ to $\H$
such that $\lim_{n\to\infty}\rrr{D_n}=\infty$.
Let $\hg^n=\hg_{D_n}$ denote the image in $\H$ of the interface of the
coupled discrete GFF in $D_n$.
For each fixed $t$, by Lemma \ref{l.weaklimit}, there is a subsequence of the
$D_n$ along which the pair $(\hg^n([0,t]), h)$
converges in law
(with respect to the sum of the Hausdorff metric
on the first component and the $\widehat d$ metric on the second
component)
to the law
of a random pair $(\gamma([0,t]), h)$, where the marginal law of $\gamma$ (by the main result
of \cite{\DGFFpaper}) must be \SLEkk4/.

By Theorem \ref{uniquecontinuumpath} and Lemma \ref{l.weakgraphconvergence}, it
will be enough to show that any such limiting pair $(\gamma, h)$
satisfies the hypotheses of Theorem \ref{uniquecontinuumpath}.
For each fixed $t$, by Lemma
\ref{Hausdorfflimitislocal}, $\gamma([0,t])$ is a local set in this limiting coupling.
We next claim that $\condexp_{\gamma([0,t])}$ is almost surely given by
$$h_t:= \lambda \left( 1- 2\pi^{-1}\arg (g_t-W_t)\right),$$
where $g_t$ is the Loewner evolution, driven by $W_t$, that corresponds to $\gamma$.
Note that since $h_t$ is a bounded function that is defined almost everywhere in $\H$,
it may be also viewed as a distribution on $\H$ in the obvious way:
$(h_t, \density) = \int h_t(z) \density(z)dz$.


Once this claim is proved, Theorem \ref{continuumcontourtheorem} is
immediate from Theorem \ref{uniquecontinuumpath}, since the claim implies
that the limiting law of $(h, W)$
satisfies the hypotheses of Theorem \ref{uniquecontinuumpath} and thus $W$
is almost surely the $\Lambda$-valued function of $h$ described in
Theorem \ref{uniquecontinuumpath}---and the fact that this convergence holds for any subsequence of the
$D_n$ implies that it must hold for the entire sequence.

Let $A_n$ denote the set of vertices incident to the left or right
of the preimage of the path $\hg^n([0,t])$ in $D_n$ (so that each $A_n$ is a discrete
algorithmically local set, representing the set of vertices whose values
are observed up to the first point in the exploration algorithm that the capacity of the
image of the level line in $\H$ reaches $t$).

Let $\condexp^n_t$ denote the conditional expectation of $h$
given the values of $h_{D_n}$ on vertices in $A_n$, viewed as a distribution---more precisely,
$(\condexp^n_t, \density) = (\widehat h_{D_n} \circ
\phi_n^{-1}, \density)$, where $\widehat h_{D_n}$ is the (piecewise affine interpolation
of) the discrete harmonic interpolation to $D_n$ of the values of $h_{D_n}$ on the
vertices of $A_n$ and on the boundary vertices.
Let $W^n_t$ be the Loewner driving parameter for $\hg^n([0,t])$.
Fix $t \geq 0$ and consider now the triple:
$$(W^n_t, \condexp^n_t, \hg^n([0,t]) ).$$
By Lemma \ref{l.weaklimit}, this converges along a subsequence in law
(with respect to the $\dstrong$ metric on first coordinate plus the $\widehat d$ metric
on the second coordinate plus the Hausdorff $d_*$ metric on the third coordinate)
to a limit $(W_t, \condexp_t, K_t)$.  We may define the Loewner evolution
$g^n_t$ in terms of $W^n_t$ and analogously define
$$h^n_t:= \lambda \left( 1- 2\pi^{-1}\arg (g^n_t-W^n_t)\right).$$
For each $\density \in H_s(\H)$, we claim that the random quantity $$(h^n_t, \density) - (\condexp^n_t, \density)$$
is a continuous function of the triplet above, which implies that the difference between this quantity and $(h_t, \density) - (\condexp_t, \density)$
converges in probability to zero.  The continuity of the latter term holds simply since
$\density$ is smooth and compactly supported (and thus the $\Delta^a \density$ lies in $L^2$ for all $a$), while the former piece
is continuous with respect to the $\dstrong$ metric on $\gamma$.  Following the proof of Lemma \ref{Hausdorfflimitislocal}, it is not hard to see that
$\condexp_t=\condexp_{K_t}$, since on the discrete level, once one conditions on $h^n_t$ and
$\condexp^n_t$ the conditional law of the field minus $\condexp^n_t$ is that of a zero boundary DGFF
on the set of unobserved vertices.


A harmonic function is determined by its values
in an open set, since a harmonic function is the real part of a (possibly multi-valued) analytic function, so it is now enough to show that for each such $\density$, the
difference between the conditional
expectation of $(h, \density)$ --- given $A_n$ and the value of
$h_{D_n}$ on $A_n$ --- and the value $(h^n_t, \density)$ converges in
probability to zero as $n \rightarrow \infty$ (along a subsequence for
which a weak limit exists).

The expected value of $(h, \density)$ given the values on $A_n$
is given by $(\condexp^n_t, \density)$,
which is in turn a weighted average of the values
of $\widehat h_{D_n}$ on vertices of $D_n$ which lie on triangles that intersect the
image under $\phi_n$ of the support of $\density$.  It follows from Lemma \ref{heightgaplemma}
that if $\density$ is such that the distance between these vertices and $A_n$
necessarily tends to $\infty$ as $n \to \infty$ (which is the case if
$\density$ is compactly supported in the
complement of the set of points that can be reached by a Loewner evolution
up to time $t$), then any subsequential weak limit of the
law of the triplet above is the same as it would be
if $\widehat h_{D_n}$ were replaced by
the discrete harmonic function which is $-\lambda$ on the left-side vertices of
$A_n$ and $\lambda$ on the right side.  By standard estimates relating
discrete and continuous harmonic measure (it is enough here to recall that discrete random walk
scales to Brownian motion), we therefore
have $(h^n_t, \density) - (\condexp^n_t, \density) \to 0$ in law as $n \to \infty$ and thus
$(h_t, \density) - (\condexp_{K_t}, \density) = 0$ almost surely.  Since this is true
for any $\density$ which is necessarily supported off of $K_t$, we have
$h_t = \condexp_{K_t}$ on $\H \setminus K_t$ almost surely as desired.

\QED

We now give an alternate proof of the fact, proved in \cite{\DGFFpaper},
that the $\hg^n$ converge in law to \SLEkk4/.
Using the notation introduced in the previous proof, write
$$T^n_\epsilon(\density) = \inf \{ t: (h^n_t, \density) - (\condexp^n_t, \density) > \epsilon \}.$$
$$U^n_\epsilon(\density) = \inf \{ t: (h^n_t, \density) - (\condexp^n_t, \density) < -\epsilon \}.$$
Consider a subsequential limit along which the quadruple
$$(t, W^n_t, \condexp^n_t, \hg^n([0,t])),$$
defined with $t = T^n_\epsilon(\density)$, converges in law to a limit
$$(T, W_T, \condexp_{K_T}, K_T).$$
We claim that in such a limit, for any fixed neighborhood of the support of $\density$,
$T$ is almost surely large enough so that $K_T$ intersects that neighborhood.  The
arguments in the previous proof would imply that---on the event that $K_T$ does
does not intersect such a neighborhood---we have $$(h_T, \density) - (\condexp_{K_T}, \density) = 0$$
almost surely.  However, since $(h^n_t, \density) - (\condexp^n_t, \density) > \epsilon$ on
the analogous event for each $n$---and the values $(h^n_t, \density) - (\condexp^n_t, \density)$
converge to
$(h_T, \density) - (\condexp_{K_T}, \density)$---we must have
$$(h_T, \density) - (\condexp_{K_T}, \density) > \epsilon$$ almost surely on this event, which
implies that the event has probability zero.  A similar argument holds with $U$ in place of $T$.

The law of $(\condexp^n_t, \density)$, up until the first time that
$\hg^n([0,t])$ intersects a fixed neighborhood of the support of $\density$,
is a martingale whose largest increment size tends to zero in $n$.  Thus, if we parameterized
time by quadratic variation, then the limiting process
$(\condexp_{K_T}, \density)$ would be a Brownian motion up until the first
time that $K_T$ intersects the support of $\density$, and $(h^n_t, \density) - (\condexp^n_t, \density)$
would converge in law to this limiting process with respect to the supremum norm on finite
time intervals.

The height gap lemma implies that
$(h_T, \density) = (\condexp_{K_T}, \density)$ for any fixed stopping time (as discussed
in the previous proof).  We know that $(\condexp_{K_T}, \density)$
is a Brownian motion when parameterized by
$-E_t(\density)$, as defined in Section \ref{heightevolutionsection}, so
the same must be true for $(h_T, \density)$, and the
arguments in the proof of Lemma \ref{lawofpathlemma} then
imply that the law of $W$ is that of $\sqrt 4$ times a Brownian motion (so that
the Loewner evolution generated by $W$ is \SLEkk 4/) and $\lambda =
\sqrt{\pi/8}$.  Since the $(h^n_t, \density)$ converge uniformly
to $(h_t, \density)$, the same arguments imply that the
$W^n$ converge in law to $W$ with respect
to the supremum norm on compact intervals of time.

The fact that this driving parameter convergence holds for both forward and reverse parameterizations of the path
implies that the convergence also holds in $\dstrong$ by the main result of \cite{SheffieldSun}.

\section{Remark on other contour lines} \label{s.corsection}
A more general problem than the one dealt with in this paper is to try
to identify the collection of {\em all} chordal contour lines and contour loops
of an instance of the GFF with {\em arbitrary} boundary conditions---and to show that they
are limits of the chordal contour lines and contour loops of the piecewise linear approximations
of the field.  The second author is currently collaborating with Jason Miller on some aspects of
this general problem.

For now, we only briefly mention one reason to expect Theorems \ref{continuumcontourtheorem}
and \ref{uniquecontinuumpath} to also provide information about the general problem.
Let $h$ be a GFF with boundary conditions $h_\p$ as in Theorem \ref{continuumcontourtheorem}.
When $\psi \in H_s(D)$, it is not hard to see that the law of
$h$ is absolutely continuous with respect to the law of $h+\psi$.

\begin{corollary} \label{c.contourcor}
In the context of Theorem \ref{continuumcontourtheorem}, if $\psi \in H_s(\H)$ and
$h$ is replaced by $h+\psi$, then
as $\rr\to\infty$, the random paths $\hg_D=\phi_D\circ\gamma_D$,
viewed as $\Lambda_C$-valued random variables on
$(\Omega, \mathcal F)$, converge in probability (with respect to
the metric $\dstrong$ on $\Lambda_C$) to an $(\Omega, \mathcal
F)$-measurable random path $\gamma^\psi \in \Lambda_C$ whose law
is absolutely continuous with respect to the law of \SLEkk 4/.
\end{corollary}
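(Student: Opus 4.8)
The plan is to deduce the corollary entirely from the mutual absolute continuity of the laws of $h$ and $h+\psi$, together with the observation that the discrete contour line is a single fixed measurable function of the field. I would write $\mu$ for the law of the GFF $h$ (with the $\pm\lambda$ boundary conditions of Theorem~\ref{continuumcontourtheorem}) and $\mu^\psi$ for the law of $h+\psi$, both regarded as measures on $(\Omega,\mathcal F)$. For each $\TG$-domain $D$ the passage from a field to its discrete zero contour line in $\H$ is a single $\mathcal F$-measurable map $F_D:\Omega\to\Lambda_C$ (project the zero-boundary part onto $H_{\TG}(D)$, add the discrete-harmonic extension of the $\pm\lambda$ boundary data, take the zero contour line, and push forward by $\phi_D$). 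Since $\psi\in H_s(\H)$ is compactly supported in $\H$ it vanishes near $\partial\H$, so the boundary data is unchanged and the corollary's paths are literally the \emph{same} maps $F_D$ evaluated at the input $h+\psi$, i.e.\ $F_D(h+\psi)$. Letting $\gamma$ denote the $\mathcal F$-measurable limit from Theorem~\ref{continuumcontourtheorem}, so that $F_D(h)\to\gamma(h)$ in $\mu$-probability with $\gamma(h)$ distributed like \SLEkk4/, I would then define $\gamma^\psi:=\gamma(h+\psi)$, which is $\mathcal F$-measurable because it is $\gamma$ composed with the measurable shift $h\mapsto h+\psi$.

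The first step would be to record the absolute continuity. Since $\psi\in H_s(\H)\subset H(\H)$ lies in the Cameron--Martin space of the GFF, the laws $\mu$ and $\mu^\psi$ are mutually absolutely continuous, with Radon--Nikodym derivative
$$\frac{d\mu^\psi}{d\mu}(h)=\exp\Bigl((h,\psi)_\nabla-\tfrac12\,\|\psi\|_\nabla^2\Bigr),$$
where $(h,\psi)_\nabla$ is the a.s.\ defined Gaussian variable of Section~\ref{ss.notation}; this derivative is a.s.\ strictly positive and finite and lies in $L^1(\mu)$. This is precisely the remark preceding the corollary, together with its symmetric counterpart obtained by replacing $\psi$ with $-\psi$.

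The second step would transfer the convergence in probability across this change of measure. Fixing $\epsilon>0$ and setting $A_D=\{\dstrong(F_D,\gamma)>\epsilon\}\in\mathcal F$, Theorem~\ref{continuumcontourtheorem} gives $\mu(A_D)\to0$ as $\rr\to\infty$. Writing $Z=d\mu^\psi/d\mu$, one has $\mu^\psi(A_D)=\E_\mu[\mathbf 1_{A_D}Z]$, and since $Z\in L^1(\mu)$ the family $\{\mathbf 1_{A_D}Z\}$ is uniformly integrable; as $\mathbf 1_{A_D}\to0$ in $\mu$-probability this forces $\mu^\psi(A_D)\to0$. Unwinding the substitution $h\mapsto h+\psi$ (which carries $\mu$ to $\mu^\psi$), this says exactly that the contour lines $F_D(h+\psi)$ converge in probability, under the law of $h$, to $\gamma^\psi=\gamma(h+\psi)$ in the $\dstrong$ metric; the a.s.\ membership $\gamma^\psi\in\Lambda_C$ transfers in the same way because $\mu^\psi\ll\mu$.

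Finally, the law of $\gamma^\psi$ is the pushforward $\gamma_*\mu^\psi$. Because $\mu^\psi\ll\mu$, pushing forward preserves absolute continuity, so $\gamma_*\mu^\psi\ll\gamma_*\mu$, and by Theorem~\ref{continuumcontourtheorem} the latter is the law of \SLEkk4/; hence the law of $\gamma^\psi$ is absolutely continuous with respect to \SLEkk4/, as claimed. I do not expect a serious obstacle here: the only points needing care are the uniform-integrability argument that lets convergence in probability pass through the change of measure, and the bookkeeping confirming that the discrete contour line and its limit are genuinely fixed measurable maps, so that replacing $h$ by $h+\psi$ amounts to re-evaluating the same functions on an absolutely continuous input.
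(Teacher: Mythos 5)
Your proposal is correct and follows essentially the same route as the paper, which states the corollary as an immediate consequence of the remark that the laws of $h$ and $h+\psi$ are mutually absolutely continuous (Cameron--Martin); you have simply filled in the details of transferring convergence in probability and the pushforward of laws across that change of measure. The uniform-integrability step and the observation that the discrete contour lines are the same fixed measurable maps evaluated at $h+\psi$ are exactly the bookkeeping the paper leaves implicit.
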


We may interpret $\gamma^\psi$ as the {\bf zero contour line of $h + \psi$}.  Note that if $\psi$ is equal to a constant
$-C$ on an open set, then the intersection of $\gamma^\psi$ with this set can be viewed as (a collection of arcs of) a $C$ contour line of $h$.
By choosing different $\psi$ functions and patching together these arcs, one might hope to obtain a family of height $C$ contour loops.
It requires work to make all this precise, however, and we will not discuss this here.

\bibliographystyle{halpha}
\bibliography{mr,prep,notmr,DGFFcontours,GFFcontours}

%
%
%
\end{document}